\newcommand{\R}{{\mathbb R}}
\newcommand{\N}{{\mathbb N}}
\newcommand{\C}{{\mathbb C}}
\newcommand{\Z}{{\mathbb Z}}
\newcommand{\T}{{\mathbb T}}
\newcommand{\bS}{{\mathbb S}}
\newcommand{\dx}{\mathrm{d}}
\newcommand{\e}{\mathrm{e}}
\newcommand{\loc}{\mathrm{loc}}
\DeclareMathOperator{\supp}{supp}
\DeclareMathOperator{\csch}{csch}
\newcommand{\re}{\text{\upshape Re\,}}
\newtheorem{theorem}{Theorem}[section]
\newtheorem{proposition}[theorem]{Proposition}
\newtheorem{lemma}[theorem]{Lemma}
\theoremstyle{definition}
\newtheorem{definition}[theorem]{Definition}
\newtheorem{remark}[theorem]{Remark}
\numberwithin{equation}{section}
\title[Dispersion for the wave equation in the exterior of the torus]{Dispersion for the wave equation in the exterior of the torus in three dimensions}
\author[R.\@ Quirchmayr]{Ronald Quirchmayr}
\address{Department of Mathematics,\\Babeș-Bolyai University,  
Cluj-Napoca, Romania}
\email{ronald.quirchmayr@ubbcluj.ro}
\author[A.\@ Waters]{Alden Waters} 
\address{Department of Mathematics and Physics,\\ Leibniz Universit\"{a}t Hannover, 
Hannover, Germany
} 
\email{alden.waters@math.uni-hannover.de}
\begin{document}
	
\begin{abstract}
We prove dispersive estimates for the wave equation in the exterior of a torus. Because no separation of variables into a basis of eigenfunctions and eigenvalues exists for the time harmonic problem, we introduce a related approximate operator for the Dirichlet Laplacian in the exterior of a torus. The approximate operator coincides with the Schr\"odinger operator with a P\"oschl-Teller potential and agrees with the Dirichlet Laplacian to leading order. The operator here which we develop is related to the so-called Mehler-Fock kernel. Using the known solution to the eigenvector and eigenvalue problem of P\"oschl-Teller, a high-frequency analysis of the approximate operator for the wave equation can be made accurately. The operator for this problem gives a close approximation to the $L^1\rightarrow L^{\infty}$ dispersive estimate at a suitable small distance from the torus for the corresponding exterior wave operator with Dirichlet Laplacian. 
\end{abstract}
	
\maketitle

\section{Introduction and statement of the main theorem} \label{Intro}
\noindent
An important class of problems in the physics of dispersive waves deals with the effects of spatial obstacles on incoming waves such as reflection, absorption, diffraction or scattering.
A solid ball hit with sound waves that bounce off its boundary is the simplest example of an acoustic scattering problem. Recently, the precise rate of dispersion for waves in the exterior of a ball was determined in \cite{LI2}. For generic obstacles this is a difficult problem. In particular, if the obstacle is not strictly convex, trapping can occur: e.g.\@ in the form of repeated reflections making it impossible for waves to escape from certain regions.

When dealing with nonlinear generalizations of wave equations, the so-called $L^1\rightarrow L^{\infty}$ dispersive estimate for the wave operator is optimal for proving well-posedness. However, it is not known how to determine the dispersion rate for Dirichlet or Neumann operators in the exterior of non-convex obstacles.  
The aim of this work is to establish $L^1\rightarrow L^{\infty}$ estimates for the Laplacian in the exterior of a particular non-convex obstacle, namely the torus in three-dimensional space.  
For strictly convex domains, the problem has been well studied using Fourier Integral Operators  (FIO) and semi-classical analysis, cf.\@ \cite{melroseT, melroseT2, melroseT3, zworski, LI, LI2}.  The development of an FIO would not give estimates which are global enough to track the effects of the trapping for large values of the time variable, as these are only accurate locally in small space-time coordinate patches. 
Our new approach is based on a P\"oschl-Teller potential eigenfunction solution to the principal symbol in combination with a Fourier series analysis in the angular variables. 
We construct an operator which allows for analysis of the principal symbol of the Dirichlet-Laplacian and prove that it still satisfies a tractable inequality for $L^1\rightarrow L^{\infty}$ dispersion. 
The error estimates we show are good for high frequency data for the Cauchy problem for the wave equation with Dirichlet Laplacian in the exterior of the torus. 

Let $\varphi\colon \R^+\to[0,1]$ be a fixed \emph{high frequency cutoff function}. That is, $\varphi\in C^\infty_c(\R^+)$ with the property that $\varphi(k)=1$ in a neighborhood of $k=b$ for $b>0$. 
Without loss of generality, we will assume that $\supp{\varphi}\subseteq (\frac{1}{2}b,\frac{3}{2}b)$ and $b\geq 5$.
Let $D_t=-i\partial_t$.
The following well-known dispersive estimate holds true in the absence of obstacles (i.e., in the free space $\R^d$, $d\geq1$).
\begin{theorem}[see e.g.\@ \cite{BCDbook}]
There exists a constant $C$ such that for all $t\geq 0$ and $h\in(0,1)$,
\begin{align}
\big\|\varphi(hD_t) \e^{it\sqrt{\Delta_{\mathbb{R}^d}}}\big\|_{L^{1}(\mathbb{R}^d)\rightarrow L^\infty(\mathbb{R}^d)}\leq Ch^{-3}\min \left\{1,\left(\frac{h}{t}\right)^{\frac{d-1}{2}}\right\}.
\end{align}
\end{theorem}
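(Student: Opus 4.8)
\medskip
\noindent\emph{Proof strategy.} Since $\Delta_{\R^d}$ acts only in the spatial variables and $\partial_t\e^{\I t\sqrt{\Delta}}=\I\sqrt{\Delta}\,\e^{\I t\sqrt{\Delta}}$, for each fixed $t$ the operator $\varphi(hD_t)\e^{\I t\sqrt{\Delta}}$ is the spatial Fourier multiplier with symbol $\varphi(h|\xi|)\e^{\I t|\xi|}$; equivalently it is convolution with the smooth kernel
\begin{equation}\label{eq:disp-kernel}
K_{t,h}(z)=(2\pi)^{-d}\int_{\R^d}\e^{\I z\cdot\xi}\,\varphi(h|\xi|)\,\e^{\I t|\xi|}\,\dx\xi ,
\end{equation}
so that the operator norm in question equals $\sup_{z\in\R^d}|K_{t,h}(z)|$. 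The plan is to bound this supremum by the method of (non-)stationary phase, splitting according to the size of $t$ and the distance of $|z|$ to the light cone $|z|=t$.

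For $0\le t\le h$ one uses only $0\le\varphi\le1$ and $\supp\varphi\subseteq(\tfrac12 b,\tfrac32 b)$: the integrand in \eqref{eq:disp-kernel} is supported in $\{|\xi|\le\tfrac{3b}{2h}\}$, whence $|K_{t,h}(z)|\le(2\pi)^{-d}|\{|\xi|\le\tfrac{3b}{2h}\}|\lesssim h^{-d}$, which is the claimed bound on this range since $\min\{1,(h/t)^{(d-1)/2}\}=1$ there. For $t\ge h$ I would pass to polar coordinates $\xi=r\omega$ and recognise the angular integral as the Fourier transform of the surface measure on $\bS^{d-1}$:
\begin{equation}\label{eq:disp-polar}
K_{t,h}(z)=(2\pi)^{-d}\int_0^\infty\varphi(hr)\,\e^{\I tr}\,r^{d-1}\,\widehat{\dx\sigma}(rz)\,\dx r,\qquad \widehat{\dx\sigma}(\zeta)=\int_{\bS^{d-1}}\e^{\I\zeta\cdot\omega}\,\dx\sigma(\omega) .
\end{equation}
(For $d=3$ one has the closed form $\widehat{\dx\sigma}(\zeta)=4\pi|\zeta|^{-1}\sin|\zeta|$, which makes everything below completely explicit.) Into \eqref{eq:disp-polar} I would insert the classical stationary-phase asymptotics $\widehat{\dx\sigma}(\zeta)=|\zeta|^{-(d-1)/2}\bigl(\e^{\I|\zeta|}a_+(|\zeta|)+\e^{-\I|\zeta|}a_-(|\zeta|)\bigr)$ for $|\zeta|\ge1$, with $a_\pm$ symbols of order $0$, together with the trivial bound $|\widehat{\dx\sigma}(\zeta)|\le C$; the lower-order terms of the expansion only improve the estimate. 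Writing $\rho=|z|$, this turns \eqref{eq:disp-polar} (in the range $\rho\gtrsim h$) into a sum of one-dimensional oscillatory integrals
\begin{equation}\label{eq:disp-1d}
\int_0^\infty\e^{\I r(t\pm\rho)}\,b_\pm(r)\,\dx r,\qquad \supp b_\pm\subseteq\{r\sim h^{-1}\},\quad |b_\pm^{(k)}(r)|\lesssim\rho^{-(d-1)/2}h^{-(d-1)/2}\,h^k .
\end{equation}

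Finally I would estimate \eqref{eq:disp-1d} by cases. The phase $r(t+\rho)$ has derivative $\ge t\gtrsim h$, so repeated integration by parts gains $(h/t)^N$ for every $N$; the same applies to $r(t-\rho)$ whenever $|t-\rho|\ge t/2$, i.e.\ away from the cone. In these regimes one gets $\lesssim\rho^{-(d-1)/2}h^{-(d+1)/2}(h/t)^N\le h^{-d}(h/t)^{(d-1)/2}$ after choosing $N\ge(d-1)/2$ and using $\rho\gtrsim h$, $t\ge h$. In the remaining regime $|t-\rho|<t/2$ (so $\rho\sim t$, the light-cone region) there is no oscillation to exploit, and one simply bounds \eqref{eq:disp-1d} by $\int|b_\pm|\lesssim\rho^{-(d-1)/2}h^{-(d-1)/2}\cdot h^{-1}\lesssim h^{-d}(h/t)^{(d-1)/2}$, using $\rho\sim t$ in the last step. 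The complementary range $\rho\lesssim h$ is handled directly from \eqref{eq:disp-polar} via $|\widehat{\dx\sigma}|\le C$ and the analyticity of $\widehat{\dx\sigma}$ (integration by parts in $r$ against $\e^{\I tr}$, or the trivial bound $\lesssim h^{-d}$), again producing $\lesssim h^{-d}\min\{1,(h/t)^{(d-1)/2}\}$. Collecting the cases yields $\sup_z|K_{t,h}(z)|\lesssim h^{-d}\min\{1,(h/t)^{(d-1)/2}\}$, which is the assertion (with $d=3$, hence the prefactor $h^{-3}$, in the application at hand). I expect the only genuinely delicate point to be the sharp form of the surface-measure asymptotics \emph{with} control of all derivatives of the amplitudes $a_\pm$ — equivalently, the large-argument asymptotics of the Bessel functions $J_{(d-2)/2}$ — since this is exactly what legitimises the integrations by parts in \eqref{eq:disp-1d} and pins down the exponent $(d-1)/2$; in dimension three the closed form for $\widehat{\dx\sigma}$ bypasses even this. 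Everything else is routine bookkeeping, and for the precise statement of the asymptotics I would refer to \cite{BCDbook} rather than reproduce it.
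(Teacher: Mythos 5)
The paper does not prove this theorem; it is quoted with a citation to Bahouri--Chemin--Danchin, so there is no in-paper argument to compare against. Your stationary-phase proof is the standard one and is correct in all its components: the reduction of $\varphi(hD_t)\e^{\I t\sqrt{\Delta}}$ to the spatial Fourier multiplier $\varphi(h|\xi|)\e^{\I t|\xi|}$, the polar decomposition with the surface-measure asymptotics $\widehat{\dx\sigma}(\zeta)=|\zeta|^{-(d-1)/2}\bigl(\e^{\I|\zeta|}a_+ +\e^{-\I|\zeta|}a_-\bigr)$, and the three-way case split (phase $r(t+\rho)$ always non-stationary; $r(t-\rho)$ non-stationary off the cone; trivial bound when $\rho\sim t$), each piece landing at $\lesssim h^{-d}(h/t)^{(d-1)/2}$, with the small-$\rho$ and small-$t$ ranges handled directly. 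One remark worth keeping: your derivation naturally yields the prefactor $h^{-d}$, not $h^{-3}$; the exponent $-3$ that appears in the paper's statement, which is announced for all $d\geq1$, is a slip that happens to be harmless because the estimate is only used for $d=3$ later on.
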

\noindent
In the exterior of a ball this estimate is no longer satisfied in dimensions $d>3$.
Only recently, the work \cite{LI} showed that this estimate holds for the Dirichlet Laplacian in the exterior of a three-dimensional ball.
Their proof relies heavily on the existence of a nearly global FIO for the corresponding Helmholtz operator. The second author's recent work on Maxwell's equations in the exterior of a sphere in \cite{YFW} uses separation of variables instead.  The key proof techniques in \cite{YFW} building on the spectral theory foundations in \cite{OS} allow for the representation of solutions in terms of so-called generalized eigenfunctions which provide a global representation for a solution to the Helmholtz equation in the exterior of a ball. The related Fourier integral operator in \cite{LI} also relies heavily on separation of variables. 
These decompositions are unique to the exterior of a ball. For the exterior of the torus, no such spectral solution is possible as proved in the work of \cite{weston58}; cf.\@ the discussion at the end of the article there. 
However, for a certain perturbation of the principal symbol of the wave operator on the exterior of the torus with Dirichlet boundary conditions, it is possible to develop a representation in terms of generalized eigenfunctions which have orthogonality properties in analogy to those of the Dirichlet eigenfunction problem in the exterior of a ball. 

Therefore, instead of solving the wave equation itself, we study a related P\"oschl-Teller operator $\Delta_P$ being defined by means of toroidal coordinates, which has the same principal symbol as the wave operator with toroidal Laplacian $\Delta$. 
This type of solution was used successfully in the high frequency asymptotics of \cite{ruij} for the Schr\"odinger equation, although there is a non-trivial prefactor missing in \cite{ruij} making theirs an asymptotic solution. 
In analogy to the obstacle problem in the exterior of a ball, we change variables (to toroidal coordinates) and employ generalized eigenfunctions in terms of associated Legendre functions for the resulting P\"oschl-Teller operator to construct solutions of 
\begin{align}\label{GFT}
	\begin{aligned}
		u_{t t} - \Delta_P u = 0 \qquad & \text{in} \quad\, (0,\infty)\times \R^3\setminus\mathbf{T}^2, \\
		u(t,\cdot) = 0 \qquad & \text{on} \quad (0,\infty)\times \mathbb{T}^2, \\
		u(0,\cdot) = q, \;  u_t(0,\cdot) =0 \qquad & \text{in} \quad  \R^3\setminus\mathbf{T}^2,
	\end{aligned}
\end{align}
where $\mathbb{T}^2$ and $\mathbf{T}^2$ denote the two-dimensional torus and the three-dimensional solid open torus, whose boundary is $\mathbb{T}^2$. More precisely, we construct a Greens operator---for simplicity denoted in this introduction by $\e^{it\sqrt{\Delta_P}}$---considered on compact subsets of $\R^3\setminus\mathbf{T}^2$ being separated from $\mathbb{T}^2$ by a small positive distance $\epsilon$, so that $\e^{it\sqrt{\Delta_P}} q$ uniquely solves \eqref{GFT} in the classical sense for initial data $q\in C^\infty_c(\R^3\setminus\mathbf{T}^2)$ satisfying $q\equiv0$ on $\{\tau<\epsilon_0\}$ for some $\epsilon_0>0$, where $\tau$ denotes the third (non-angular) toroidal coordinate; cf.\@ Theorem \ref{thm_existence}.

In order to establish dispersive estimates, we analyze $\varphi(hD_t)\psi(D_{\phi_2})\e^{it\sqrt{\Delta_P}}$, where $D_{\phi_2}=-i\partial_{\phi_2}$ with $\phi_2$ being the second angular variable in toroidal coordinates. Here, $\psi$ is another cutoff taking the constant value $1$ on the set $(0,\sqrt{k})$, where $k$ denotes the generalized eigenvalue corresponding to the associated Legendre eigenfunctions.
In Fourier space, $\psi(D_{\phi_2})$ acts as a truncation of the Fourier series with respect to $\phi_2$, which we may associate with a small parameter $\varepsilon>0$. 
Furthermore, we restrict the region under study to an arbitrarily large neighborhood of $\mathbf{T}^2\subseteq\R^3$, which brings in the small parameters $\epsilon_1,\eta(\epsilon_1)>0$; cf.\@ Lemma \ref{Nbelow} for details.   
Having briefly introduced our prerequisites, we state our main result---the dispersive estimates in the exterior of a torus---as follows. 

\begin{theorem}\label{main}
There exist $C(\epsilon_0,\eta)$ and $C(\varepsilon)$ such that for all $t\geq 0$ and $h\in(0,1)$,  
\begin{align*}
\big\|\varphi(hD_t)\psi(D_{\phi_2})\e^{it\sqrt{\Delta_P}})\big\|_{L^1(M)\rightarrow L^{\infty}(M)}\leq C(\epsilon_0,\eta)h^{-3}\min \bigg(\bigg\{1,\frac{h}{t}\bigg\}+hC(\varepsilon)\bigg).
\end{align*}
\end{theorem}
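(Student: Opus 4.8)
The plan is to reduce the operator norm estimate to a stationary/non-stationary phase analysis of an explicit oscillatory kernel built from the Pöschl-Teller generalized eigenfunctions (associated Legendre functions), with the angular truncation $\psi(D_{\phi_2})$ and the $\phi_1$-Fourier series providing the mechanism that converts the genuinely three-dimensional dispersion into the $(h/t)$ decay, up to the controllable remainder $hC(\varepsilon)$. First I would write the Green's operator $\e^{it\sqrt{\Delta_P}}q$ from Theorem~\ref{thm_existence} as a spectral integral in the generalized eigenvalue $k$ together with a Fourier series over the two angular toroidal modes $(n_1,n_2)$; inserting $\varphi(hD_t)$ restricts $\sqrt{k}$ (equivalently the time frequency) to a dyadic band of size $\sim b/h$, and inserting $\psi(D_{\phi_2})$ truncates the $n_2$-sum at $|n_2|\lesssim\sqrt{k}$, i.e. at roughly $b/(h\varepsilon)$ terms in a way that is quantified by $\varepsilon$. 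The kernel $K_t(x,y)$ of $\varphi(hD_t)\psi(D_{\phi_2})\e^{it\sqrt{\Delta_P}}$ is then an integral over $k$ of a sum over $(n_1,n_2)$ of $\e^{it\sqrt{k+c}}$ times products of associated Legendre functions evaluated at the $\tau$-coordinates of $x$ and $y$, times $\e^{i n_1(\phi_1(x)-\phi_1(y))}\e^{in_2(\phi_2(x)-\phi_2(y))}$.

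The main work is the pointwise bound $\|K_t\|_{L^\infty(M\times M)}\le C(\epsilon_0,\eta)h^{-3}\bigl(\min\{1,h/t\}+hC(\varepsilon)\bigr)$, since the $L^1\to L^\infty$ norm of an integral operator is exactly the sup norm of its kernel. For the $t\lesssim h$ regime one wants the uniform bound $h^{-3}$: here I would use the known uniform asymptotics of the Pöschl-Teller eigenfunctions (the Mehler-Fock normalization), bound each Legendre factor by its $L^\infty$ size on the compact $\tau$-range dictated by $\epsilon_0$, $\epsilon_1$, $\eta$, and count the frequencies — the $k$-integral contributes a factor $\sim h^{-1}$ and the two angular sums, each of length $\sim h^{-1}$, formally give $h^{-2}$; the truncation and the oscillation in $\phi_1$ need to be used to keep the total at $h^{-3}$ rather than a worse power, which is where the specific structure of the torus geometry enters. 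For $t\gtrsim h$ one must extract the extra $(h/t)^{1}$ (the $d=3$ exponent $(d-1)/2=1$): I would fix the variables, apply stationary phase in $k$ to the integral $\int \e^{it\sqrt{k+c}}a(k)\,dk$ using that $\partial_k\sqrt{k+c}\sim 1/\sqrt{k}\sim h/b$ and $\partial_k^2\sqrt{k+c}\sim -1/k^{3/2}$, and combine this with a second summation-by-parts / Poisson-type argument in $n_2$ (using the $\psi$-truncation to make the sum finite and the endpoint terms small of size $hC(\varepsilon)$) so that the combined $(k,n_2)$ phase has a genuine two-dimensional non-degenerate critical point away from the diffractive set, yielding the product $h^{-1}\cdot (h/t)$ after accounting for the $h^{-3}$ normalization. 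The $\psi$-cutoff is not smooth enough to produce an exactly stationary-phase–clean sum, so the difference between the truncated sum and its ``ideal'' completion is precisely the source of the additive error $hC(\varepsilon)$.

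The step I expect to be the main obstacle is controlling the sum over the angular mode $n_2$ uniformly near the boundary of the $\psi$-support and near the diffractive/trapped directions: the associated Legendre functions $P^{-\mu}_{-1/2+i\sqrt{k}}$ degenerate in a delicate way as the order $\mu=n_2$ approaches the degree $\sqrt{k}$, so the naive termwise bound loses the gain. To handle this I would split the $n_2$-sum at $|n_2|\sim\theta\sqrt{k}$ for a small $\theta$ depending on $\epsilon_1$: in the ``elliptic'' range $|n_2|\le\theta\sqrt{k}$ the eigenfunctions are exponentially localized in $\tau$ and, since the data and the observation region are separated from $\mathbb{T}^2$ by $\epsilon_0$ resp. $\epsilon$, these contribute only $O(h^\infty)$ or at worst $O(h)$ after summation; in the ``oscillatory'' range $\theta\sqrt{k}\le|n_2|\le\sqrt{k}$ one uses the Liouville–Green (WKB) form of the Legendre functions to put the whole kernel into a genuine oscillatory-integral form $\int\!\!\sum \e^{i\Phi}b$, and then the stationary phase in $(k,n_2)$ described above applies. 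Matching these two regimes and tracking how the cutoff parameters $\epsilon_0,\eta,\varepsilon$ propagate into the constants $C(\epsilon_0,\eta)$ and $C(\varepsilon)$ — in particular verifying that the $\varepsilon$-dependence lands only on the $h$-linear remainder and not on the leading $\min\{1,h/t\}$ term — is the technical heart of the argument; everything else is bookkeeping with the free-space bound of the preceding theorem as the template.
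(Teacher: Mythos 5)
You have the right \emph{reduction} (the $L^1\!\to\!L^\infty$ bound is exactly the $L^\infty$ bound on the kernel, and the kernel must be expanded via the Mehler--Fock/associated-Legendre asymptotics and then attacked by stationary phase), which matches the skeleton of the paper's proof via Propositions~\ref{mainprop1}--\ref{mainprop2}. But the detailed mechanism you propose has two genuine gaps that would derail the argument.

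First, your frequency count is wrong because you misidentify what $\varphi(hD_t)$ localizes. The generalized eigenvalue of the P\"oschl--Teller problem is $\sigma_{m,k}^2=m^2+k^2$ (Lemma~\ref{lem_classical_sol}), so the time phase in the kernel is $\cos(N\sigma_{m,k}t)$ and $\varphi(hD_t)$ acts as $\varphi(hN\sigma_{m,k})$. This pins down $\sqrt{m^2+k^2}$, \emph{jointly}, not $\sqrt{k}$; after the rescaling $(k,m)\to(k/h_1,m/h_1)$ with $h_1=hN$, the $\phi_1$-mode $m$ is therefore confined to a \emph{bounded} band $[\tfrac{b}{2},\tfrac{3b}{2}]$ rather than contributing $\sim h^{-1}$ terms. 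Likewise, $\psi(D_{\phi_2})$ enforces $\mu^2<k$, so after the cutoffs $\mu\lesssim h^{-1/2}$, not $\sim b/(h\varepsilon)$ or $\sim h^{-1}$. With the correct counting, your ``$h^{-2}$ from two angular sums'' does not arise; the $h^{-3}$ in the final estimate comes from the $1/h_1^2$ Jacobian of the rescaling together with the $k$-integral and the sub-square-root $\mu$-sum, not from balancing large angular sums against extra oscillation.

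Second, the proposed two-dimensional stationary phase in $(k,n_2)$ does not match the structure of the phase. The time-frequency oscillation $\e^{\pm iN\sigma_{m,k}t}$ depends only on $k$ (and the bounded discrete parameter $m$); the index $n_2=\mu$ enters only through the angular factor $\cos(\tilde\phi_2\mu)$ and the phase shift $(2\mu-1)\pi/4$ inside the Legendre asymptotics, neither of which grows with $t$. There is therefore no nondegenerate $(k,\mu)$ critical point producing the $(h/t)$ decay. The paper extracts $\min\{1,h/t\}$ from a \emph{one-dimensional} stationary phase in $k$ alone (Lemma~\ref{stationary}), using $k_0=m\big(\tau\pm\tau'\big)\big(t^2N^2\mp(\tau\pm\tau')^2\big)^{-1/2}$ and $|f''(k_0)|=tNm^2/(k_0^2+m^2)^{3/2}$, with the $\mu$-sum handled by direct counting $\lesssim \sqrt{b/h_1}$. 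Relatedly, your proposed split of the $n_2$-sum at $\theta\sqrt{k}$ and your attribution of the $hC(\varepsilon)$ error to the roughness of $\psi$ are both circumvented in the paper: the $\psi$-cutoff already holds $\mu<\sqrt{k}$, which is precisely the validity range for the Stirling-type expansion (Proposition~\ref{stirling}), and the additive $hC(\varepsilon)$ remainder in the theorem comes from the error terms $R_1,R_2,D_1,D_2$ (resp.\ the Hankel remainders in \eqref{eqn:Hankelerror}) in the Mehler--Fock asymptotics, not from endpoint effects of the cutoff.
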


One can not expect long time dispersion within the disk, whose boundary is the inner torus ring, due to the possible occurrence of trapped waves. 
The $\epsilon_0$-requirement excludes a region intersecting this inner disc, to ensure a good behavior of the wave operator. 
The bound $\eta$ is not restrictive in the sense that the dispersive estimate holds uniformly within neighboring regions of the torus, where dispersion is different as compared to the case of a strictly convex obstacle such as the ball. 
This is further discussed in Section \ref{analysis}, where this region is described in detail. We believe this estimate is sharp in the sense that it does not hold closer to the boundary $\mathbb T^2$. 
In this context it is important to note that the dispersive estimate holds close to the boundary but does not include reflections. Moreover, this estimate is for the principal symbol of the Dirichlet Laplacian. This has been suggested by \cite{wunsch}, where a $1/r^2$ potential plus Euclidean wave operator in the free space would generate this type of effect. 
Moreover, the principal symbol for the torus behaves like a $1/r^2$ potential at large distances. Indeed, the representation of the operator to the corresponding time separable problem is related to the so-called Mehler-Fock kernel, \cite{schindler}. The representation of the solutions from \cite{ruij} allows us to use existing special function theory to obtain orthogonality representations. However, while there is a lot of literature on dispersion for Schr\"odinger operator, cf.\@ e.g.\@ \cite{VK, KM}, there is none for wave operator in the exterior of a torus. One can find related scattering theory for generalized eigenfunctions on manifolds with non-trivial metrics \cite{gh1,gh2,gh3}, or dispersive estimates for Maxwell's equations \cite{Sc1,bs,sp1,sp2,sp3}.

Our construction relies on solving the wave equation but with the Laplacian replaced by (nearly) its principal symbol. It comes at a loss of lower order terms from the full symbol of the Dirichlet Laplacian in the exterior of a torus. This means that the spectral-like representation of the solution is more accurate when acting on high frequency data. 
Even if one were to use the Melrose-Taylor parametrix around the boundary of the non-trapping part of the torus, one would also have this problem of error analysis. Here, we have a global coordinate system which admits a compact representation of the kernel, which instead uses the principal symbol of the Dirichlet Laplacian. 
This allows us to estimate the wave operator accurately at a small distance from the boundary. 
Our Green's function construction based on the Mehler-Fock transform seems to be a novel approach for conservative PDEs, which is the major contribution of this article. The current mathematical physics literature exclusively uses P\"oschl-Teller potentials to construct solutions for the heat equation in hyperbolic spaces cf.\@ \cite[Ch.\@ 1.9]{autoforms}. It is difficult to adapt this construction to conservative equations. 

The outline of this paper is as follows. Section \ref{toroidal1} provides and overview on toroidal coordinates and formulates the initial value problem for the wave equation in the exterior of a torus with vanishing Dirichlet boundary data.  We use the idea of the $1/r^2$ potential to generate eigenfunctions to the P\"oschl-Teller potential in Section \ref{ptreview}. We show that this operator allows us to solve the Cauchy problem for the wave equation to a high degree of accuracy in Section \ref{analysis}.  
We prove Theorem \ref{main} in Section \ref{disperse} and draw final conclusions and future perspectives in Section \ref{conclusion}.
A collection of special function identities, asymptotics and related tools is provided in the appendices \ref{appA} and \ref{appB}.

\section{Toroidal coordinates}\label{toroidal1}
\noindent
This section introduces toroidal coordinates. Later, in Sections \ref{sec_Lap_tor} and \ref{sec_Obst_tor} we provide formulations of the Laplacian and the obstacle problem in toroidal coordinates, respectively. 

We rewrite the Cartesian coordinate triple $(x,y,z)$ by means of toroidal coordinates $(\phi_1,\phi_2,\tau)$ as follows (cf.\@ \cite{MoonSpencer1988}):
\begin{align}
	\begin{aligned}\label{toroidal}
	x(\phi_1,\phi_2,\tau;a) &\coloneqq a \frac{\sinh \tau}{\cosh\tau-\cos\phi_1} \cos\phi_2,\\
	y(\phi_1,\phi_2,\tau;a) &\coloneqq a \frac{\sinh \tau}{\cosh\tau-\cos\phi_1} \sin\phi_2,\\
	z(\phi_1,\phi_2,\tau;a) &\coloneqq a \frac{\sin\phi_1}{\cosh\tau-\cos\phi_1}.
	\end{aligned}
\end{align}
Here, $\phi_1$ and $\phi_2$ lie in $(-\pi,\pi]$ and  $[0,2\pi)$, respectively, and $\tau\geq0$. 
Furthermore, $a>0$ is a free parameter. The domains of $\phi_1$ and $\phi_2$ are interpreted as circles, which we denote by $\bS^1_{\phi_1}$ and $\bS^1_{\phi_2}$.

In the following we briefly explain how tori of arbitrary radii can be obtained as isosurfaces by fixing $a$ and $\tau$.
Let us first set $\phi_2\coloneqq 0$, which simplifies \eqref{toroidal} to the corresponding set of bipolar coordinates in the $(x,z)$-plane. More precisely, we obtain the right $(x,z)$-half-plane since $\tau$ is nonnegative.
That is, 
\begin{align*}
	\begin{aligned}
		x(\phi_1,0,\tau;a) &= a \frac{\sinh \tau}{\cosh\tau-\cos\phi_1},\\
		z(\phi_1,0,\tau;a) &= a \frac{\sin\phi_1}{\cosh\tau-\cos\phi_1}.
	\end{aligned}
\end{align*}
A calculation shows that 
\begin{equation*}
	(x-a\coth\tau)^2 + z^2 = \frac{a^2}{\sinh^2\tau} = a^2 \csch^2\tau.
\end{equation*}
Hence, for fixed $a>0$, the curves of constant $\tau>0$ are circles of radii $a\csch\tau$, whose centers lie on the positive $x$-axis with a distance of length $a\coth\tau$ away from the origin. 
A rotation about the $z$-axis yields a parametrization of the corresponding torus by means of the toroidal coordinates in \eqref{toroidal}. 

Finally, we show how to fix $a$ and $\tau$ in order to parameterize tori, which are given in terms of their radii $0<r<R$. 
For this purpose, we consider a more conventional parametrization of $\T^2$ in $\R^3$: let $(X,Y,Z)\colon \bS^1_\theta \times \bS^1_\varphi \to \R^3$ be given by
\begin{align*}
	\begin{aligned}
		X(\theta,\varphi) &\coloneqq (R + r\cos\theta) \cos\varphi,\\
		Y(\theta,\varphi) &\coloneqq (R + r\cos\theta) \sin\varphi,\\
		Z(\theta,\varphi) &\coloneqq r\sin\theta.
	\end{aligned}
\end{align*} 
By fixing
\begin{equation*}
	a \coloneqq \sqrt{R^2-r^2}>0 \quad\text{and}\quad 
	\tau\coloneqq \log\bigg(\frac{R + \sqrt{R^2-r^2}}{r}\bigg) > 0,
\end{equation*}
we obtain another parametrization of the same torus by means of the toroidal coordinates in \eqref{toroidal}.

\subsection{Laplacian in toroidal coordinates} \label{sec_Lap_tor}
For fixed $a>0$, let $u\colon \bS^1_{\phi_1} \times \bS^1_{\phi_2} \times \R^+ \to \R$ be a function of the toroidal coordinates $(\phi_1,\phi_2,\tau)\in\bS^1_{\phi_1} \times \bS^1_{\phi_2} \times \R^+$. The Laplacian of a $C^2$-function $u$ is given by (cf.\@ \cite{MoonSpencer1988}) 
\begin{align} \label{toroidal_Laplace}
\begin{aligned}
	\Delta u = \frac{(\cosh\tau-\cos\phi_1)^3}{a^2\sinh\tau} 
	\bigg[ \partial_{\phi_1}\bigg(\frac{\sinh(\tau)\, u_{\phi_1}}{\cosh\tau-\cos\phi_1}\bigg) 
	+ \frac{\csch(\tau)\,u_{\phi_2\phi_2}}{\cosh\tau-\cos\phi_1}
	+ \partial_{\tau}\bigg(\frac{\sinh(\tau)\, u_{\tau}}{\cosh\tau-\cos\phi_1}\bigg)
	\bigg],
\end{aligned}
\end{align}
where subscripts denote the respective partial derivatives, e.g.\@ $u_{\phi_1}=\frac{\partial u}{\partial\phi_1}$.
The related P\"oschl-Teller operator $\Delta_{P}$ acts on $u$ as follows: 
\begin{align} \label{Delta_P}
\Delta_P u = \frac{(\cosh\tau-\cos\phi_1)^3}{a^2\sinh\tau} 
	\bigg[ \bigg(\frac{\sinh(\tau)\, u_{\phi_1\phi_1}}{\cosh\tau-\cos\phi_1}\bigg) 
	+ \frac{\csch(\tau)\,(u_{\phi_2\phi_2}-\frac{1}{4})}{\cosh\tau-\cos\phi_1}
	+ \bigg(\frac{\sinh(\tau)\, u_{\tau\tau}}{\cosh\tau-\cos\phi_1}\bigg)
	\bigg].
\end{align}
For $h\in L^1(M)$ we consider the partial Fourier transform $\mathcal F_\phi$ in the angular variables $\phi=(\phi_1,\phi_2)$ given by
\begin{align*}
	\mathcal{F}_\phi(h)(\xi_1,\xi_2,\tau) \coloneqq \int\limits_0^{2\pi}\int\limits_{-\pi}^{\pi}h(\phi_1,\phi_2,\tau)\, \e^{-i\xi_1\theta_1} \e^{-i\xi_2\theta_2}\,\dx \theta_1\,\dx\theta_2. 
\end{align*}
Applying $\mathcal F_\phi$ on $\Delta_P u$ results in 
\begin{align*}
-\frac{(\cosh\tau-\cos\phi_1)^3}{a^2\sinh\tau} 
	\bigg[ \bigg(\frac{\sinh(\tau)\, |\xi_1|^2 \tilde u}{\cosh\tau-\cos\phi_1}\bigg) 
	+ \frac{\csch(\tau)\,(|\xi_2|^2-\frac{1}{4}) \tilde u}{\cosh\tau-\cos\phi_1}
	+ \bigg(\frac{\sinh(\tau)\, \tilde u_{\tau\tau}}{\cosh\tau-\cos\phi_1}\bigg)
	\bigg],
\end{align*}
where $\tilde u = \mathcal F_\phi(u)$. 
We note that $\Delta$ and $\Delta_P$ have the same principal symbol and that the symbol of $\Delta_P$ differs from the corresponding principal symbol only by a zero order term.  

\subsection{Formulation of the obstacle problem} \label{sec_Obst_tor}
In the following we formulate the obstacle problem as an initial boundary value problem in the exterior of a torus in toroidal coordinates.

Fix $a,\tau_1 >0$ and consider the torus $\T^2\subseteq\R^3$ being given by
\begin{equation*}
	\T^2\coloneqq \big\{\big(x,y,z\big)(\phi_1,\phi_2,\tau_1;a)\colon \phi_1\in (-\pi,\pi],\phi_2\in[0,2\pi)\big\}.
\end{equation*} 
Furthermore, we denote the corresponding open solid torus by $\mathbf{T}^2$ and refer to the closed subset
\begin{equation*}
	M\coloneqq \R^3\setminus \mathbf{T}^2  = \big\{\big(x,y,z\big)(\phi_1,\phi_2,\tau;a)\colon \phi_1\in (-\pi,\pi] ,\phi_2\in[0,2\pi),\, \tau\in[0,\tau_1]\big\}\subseteq\R^3
\end{equation*} 
as the exterior of $\mathbf{T}^2$. Both $\mathbf{T}^2$ and $M$ have $\T^2$ as its boundary: $\partial\mathbf{T}^2=\partial M=\T^2$.
We denote by $\Delta_{\T^2}$ the Laplacian $\Delta$ with vanishing Dirichlet boundary data on $\T^2$.

It is known that there is no separation of variables for the time harmonic solution to the initial value problem for the wave equation with prescribed Dirichlet boundary data on $\partial M$.
The lack of an orthonormal basis for the exterior Helmholtz problem for the torus is discussed at the end of \cite{weston58}. 
For this reason, we will instead study the following related problem, where the Laplacian $\Delta$ is replaced by the P\"oschl-Teller operator $\Delta_P$:
\begin{align}\label{CFPT}
	\begin{aligned}
		u_{t t} - \Delta_Pu = 0 \qquad & \text{for} \quad  (t,\phi_1,\phi_2,\tau) \in (0,\infty)\times (-\pi,\pi] \times [0,2\pi) \times [0,\tau_1], \\
		u(t,\phi_1,\phi_2,\tau) = 0 \qquad & \text{for} \quad  (t,\phi_1,\phi_2,\tau)\in (0,\infty)\times (-\pi,\pi]\times [0,2\pi) \times \{\tau_1\}, \\
		\begin{cases}
		u(0,\phi_1,\phi_2,\tau) \;= u_0(\phi_1,\phi_2,\tau), \\  
		u_t(0,\phi_1,\phi_2,\tau) = u_1(\phi_1,\phi_2,\tau),
		\end{cases}
		 \qquad & \text{for} \quad  (\phi_1,\phi_2,\tau)\in (-\pi,\pi]\times [0,2\pi) \times [0,\tau_1].		
	\end{aligned}
\end{align}
We will construct the corresponding solution operator in terms of special functions. 

\section{Review of the solutions to the Attractive P\"oschl-Teller Equation}\label{ptreview}
\noindent
This section introduces special functions, which we will later use to construct solutions for \eqref{CFPT}. We apply standard notation and therefore use $x\in\R$ as independent variable (which is not related to the first toroidal coordinate component of the previous section).  

Let $\mu\geq 0$ be a parameter $V\colon\R\to\R$ be a potential defined by 
\begin{align}
V(x)=\Big(\mu^2-\frac{1}{4}\Big)\mathrm{sinh}^{-2}(x).
\end{align}
We consider the following P\"oschl-Teller eigenfunction equation:
\begin{align}\label{stationarys}
-\frac{\dx^2}{\dx x^2}\psi(x)+V(x)\psi(x)=k^2\psi(x), \quad k^2>0,
\end{align}
which takes the form of a stationary Schr\"odinger equation.
Let $\mathsf{P}^{q}_{n}(x)$ denote the Legendre function of order $q$ and degree $n$. 
The solution to the P\"oschl-Teller eigenfunction equation is the weighted associated Legendre function
\begin{align}\label{conical_P}
\sqrt{\sinh(x)}\mathsf{P}^{-\mu}_{ik-\frac{1}{2}}(\cosh x)=\frac{(\sinh x)^{\mu+\frac{1}{2}}}{2^{\mu}\Gamma(\mu+1)} \, \mathbf{F}\bigg(\frac{\mu+\frac{1}{2}+ik}{2},\frac{\mu+\frac{1}{2}-ik}{2},\mu+1,-\sinh^2(x)\bigg).
\end{align}
This is a result of equation in \cite[(14.3.15)]{NIST} as remarked in \cite[(4.3--4.5)]{ruij}. Olver's hypergeometric function $\mathbf{F}$ is well-defined for $k,\mu>0$. Indeed, following \cite[Section 2]{daSilva_etal2024} and substituting $\xi =\tanh(x)$ in \eqref{stationarys} yields 
\begin{align}\label{eq_xi}
\xi(1-\xi)\frac{\dx^2\psi(\xi)}{\dx\xi^2}+\left(\frac{1}{2}-\frac{3}{2}\xi\right)\frac{\dx\psi(\xi)}{\dx\xi}+\left(\frac{1}{(1-\xi)}\frac{k^2}{4}-\frac{1}{\xi}\frac{\mu^2-\frac{1}{4}}{4}\right)\psi(\xi)=0.
\end{align}
The general solution of this equation can be written as
\begin{align} \label{psi_k}
\psi_k(x)=\left(\tanh(x)\right)^{-\mu+\frac{1}{2}}(\cosh(x))^{-ik}F(x),
\end{align}
where $F(x)$ is a combination of hypergeometric functions: 
\begin{align}
\begin{aligned}	\label{hyper}
F(x)&=A\,\mathbf{F}\!\left(\frac{1}{2}\Big(1-\Big(\mu+\frac{1}{2}\Big)+ik\Big),\frac{1}{2}\Big(-\Big(\mu+\frac{1}{2}\Big)+2+ik\Big),1-\mu,\mathrm{tanh}^2(x)\right)\\ &\quad +B\tanh^{2\mu}(x)\,\mathbf{F}\!\left(\frac{1}{2}\Big(\mu+\frac{1}{2}+ik\Big),\frac{1}{2}\Big(\mu+\frac{1}{2}+1+ik\Big),\mu+1,\mathrm{tanh}^2(x)\right)
\end{aligned}
\end{align}
with constants $A$ and $B$.  
Let $a, c\in\C$ be constants and $z$ be a real number. 
Applying the following substitution  (see \cite[(15.9.17)]{NIST}) 
\begin{equation*}
	\mathbf{F}\!\left(a,a+\frac{1}{2},c;z\right)=2^{c-1}z^{\frac{1-c}{2}}%
	(1-z)^{-a+\frac{c-1}{2}}\,\mathsf{P}^{1-c}_{2a-c}\left(\frac{1}{\sqrt{1-z}}\right),
\end{equation*}
to the $B$-term in \eqref{hyper}, setting $B\coloneqq2^{-\mu}$ and $A\coloneqq0$ yields the desired conclusion. 
This solution vanishes at $x=0$. The other solution, where $A\coloneqq 2^{-\mu}$ and $B\coloneqq 0$, does not vanish at $x=0$, but has good decay at $x=\infty$. We will use the first type of solutions in our analysis because we will assume that the initial data in the Cauchy problem for the wave equation is $C^\infty_c(M)$. 

The remainder of this section establishes a series of lemmas, which will be used later in Sections \ref{analysis} and \ref{disperse}. 

\begin{lemma}\label{checkboundary}
For $\mu\in\Z$ and $k\in\R\setminus\{0\}$ it holds that 
\begin{align}
\sqrt{\sinh(0)}\mathsf{P}^{-\mu}_{ik-\frac{1}{2}}(\cosh 0)=0 \quad \textrm{and} \quad \sqrt{\sinh(0)}\mathsf{P}^{\mu}_{ik-\frac{1}{2}}(\cosh 0)=0.
\end{align}
\end{lemma}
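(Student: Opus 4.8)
The starting point is the trivial observation that $\sinh 0 = 0$, hence $\sqrt{\sinh 0} = 0$; consequently both identities follow once we know that the companion factor $\mathsf{P}^{\pm\mu}_{ik-\frac12}(\cosh 0) = \mathsf{P}^{\pm\mu}_{ik-\frac12}(1)$ is \emph{finite}, so that the product is a genuine $0$ rather than an indeterminate $0\cdot\infty$. Since $\mu$ and $-\mu$ enter the statement symmetrically we may assume $\mu\in\Z$, $\mu\ge 0$. Equivalently, I would show that $x\mapsto\sqrt{\sinh x}\,\mathsf{P}^{\pm\mu}_{ik-\frac12}(\cosh x)$ extends continuously to $x=0$ with value $0$, and then evaluate at $x=0$.

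For the minus case this is read off directly from \eqref{conical_P}: the prefactor $(\sinh x)^{\mu+\frac12}$ on the right-hand side has strictly positive exponent $\mu+\tfrac12\ge\tfrac12$ and therefore vanishes at $x=0$, while Olver's hypergeometric function is regular there with $\mathbf F(a,b,\mu+1,0)=1/\Gamma(\mu+1)<\infty$. Hence $\sqrt{\sinh x}\,\mathsf{P}^{-\mu}_{ik-\frac12}(\cosh x)\to 0$ as $x\to 0$, which is the first claim. (For $\mu=0$ nothing is needed beyond $\mathsf{P}^{0}_{ik-\frac12}(1)=1$.)

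For the plus case one cannot substitute $x=0$ naively: for \emph{non-integral} order, $\mathsf{P}^{\mu}_{ik-\frac12}(\cosh x)$ would blow up like $(\cosh x-1)^{-\mu/2}$ as $x\to 0$, and this is precisely where the hypothesis $\mu\in\Z$ is used. The clean way is to invoke the connection formula for associated Legendre functions of integer order (see Appendix \ref{appA}; cf.\@ \cite[\S14.9]{NIST}): for $\mu\in\Z_{\ge0}$,
\begin{equation*}
\mathsf{P}^{\mu}_{ik-\frac12}(\cosh x)=c_{\mu,k}\,\mathsf{P}^{-\mu}_{ik-\frac12}(\cosh x),\qquad
c_{\mu,k}=\pm\frac{\Gamma\!\big(ik+\mu+\tfrac12\big)}{\Gamma\!\big(ik-\mu+\tfrac12\big)}=\pm\prod_{j=0}^{2\mu-1}\Big(ik-\mu+\tfrac12+j\Big),
\end{equation*}
the sign depending on the normalization convention for $\mathsf{P}$. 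Because $k\in\R\setminus\{0\}$, none of the numbers $ik-\mu+\tfrac12+j$ vanishes (each has nonzero imaginary part), so $c_{\mu,k}$ is a finite, nonzero constant. Multiplying by $\sqrt{\sinh x}$ and using the minus case gives the second identity.

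The only genuine obstacle is the plus case, i.e.\@ the fact that integrality of $\mu$ forces $\mathsf{P}^{\mu}_{ik-\frac12}$ and $\mathsf{P}^{-\mu}_{ik-\frac12}$ to be linearly dependent, which is exactly what converts a would-be singularity at $x=0$ into a zero. If one prefers to avoid quoting the connection formula, the same conclusion is obtained self-containedly from the hypergeometric reduction $\mathbf F(a,b,1-\mu,z)=\frac{(a)_\mu(b)_\mu}{\mu!}\,z^{\mu}\,\mathbf F(a+\mu,b+\mu,\mu+1,z)$ for $\mu\in\Z_{\ge0}$, applied to the representation of $\mathsf{P}^{\mu}_{ik-\frac12}(\cosh x)$ analogous to \eqref{conical_P}: it again displays $\sqrt{\sinh x}\,\mathsf{P}^{\mu}_{ik-\frac12}(\cosh x)$ as a constant times $(\sinh x)^{\mu+\frac12}$ times a hypergeometric factor regular at $x=0$, and the conclusion follows.
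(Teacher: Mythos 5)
Your proof is correct and follows essentially the same route as the paper's: the $-\mu$ case is read off from the hypergeometric/series representation \eqref{conical_P} because the prefactor $(\sinh x)^{\mu+\frac12}$ vanishes at $x=0$ while Olver's $\mathbf F$ is regular there, and the $+\mu$ case is reduced to the $-\mu$ case via the integer-order connection formula, which is exactly the paper's identity \eqref{switch} (NIST 14.19.3). Your additional remark, that the integrality of $\mu$ is what forces $\mathsf{P}^{\mu}_{\nu}$ and $\mathsf{P}^{-\mu}_{\nu}$ to be linearly dependent and thus tames the would-be $(\cosh x-1)^{-\mu/2}$ singularity, is a helpful gloss that the paper leaves implicit, but it does not change the argument.
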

\begin{proof}
The first relation follows by combining the representation in \eqref{hyper}, $\tanh(0)=0$ and 
\begin{align}
\bigg|\mathbf{F}\left(\frac{1}{2}(\mu+\frac{1}{2}+ik),\frac{1}{2}(\mu+\frac{3}{2}+ik),1+\mu,\mathrm{tanh}^2(0)\right)\bigg|<\infty.
\end{align} 
The second relation follows from 
\begin{align}\label{switch}
 \mathsf{P}^{-m}_{\nu}\left(x\right)=(-1)^m\frac{\Gamma\left(\nu-m+1\right)}{\Gamma\left(\nu+m%
+1\right)}\mathsf{P}^{m}_{\nu}\left(x\right)
\end{align}
which is equation (14.19.3) in \cite{NIST} for $m\in\Z$ and $\nu=ik-\frac{1}{2}$ with $k\neq 0$. 
\end{proof}

\begin{lemma}\label{tbound}
Let $k\geq 1$ and $\mu\in \N$. Then
\begin{align}
|\sqrt{\sinh(x)}\mathsf{P}^{\mu}_{ik-\frac{1}{2}}(\cosh x)|\leq \frac{(\sinh(x))^{\mu+\frac{1}{2}}}{|\Gamma(ik-\frac{1}{2}-\mu)|}
\end{align}
for all $x\in\R$.
If $\mu\geq1$, this holds true for $0<k<1$ as well.
\end{lemma}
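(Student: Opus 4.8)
The plan is to obtain the bound directly from the hypergeometric representation of the weighted Legendre function. Starting from \eqref{conical_P} written for the order $+\mu$ via the Euler-type transformation, or more conveniently from the standard series
\[
\mathsf{P}^{\mu}_{ik-\frac12}(\cosh x)=\frac{(\sinh x)^{\mu}}{2^{-\mu}\,\Gamma(\mu+\tfrac12)\,\Gamma(\tfrac12)}\cdot(\text{something}),
\]
I would first fix the correct normalisation: write $\sqrt{\sinh x}\,\mathsf{P}^{\mu}_{ik-1/2}(\cosh x)$ as $(\sinh x)^{\mu+\frac12}$ times a constant involving Gamma functions times Olver's $\mathbf F\big(\tfrac{\mu+\frac12+ik}{2},\tfrac{\mu+\frac12-ik}{2};\mu+1;-\sinh^2 x\big)$ — i.e. the analogue of \eqref{conical_P} with $-\mu$ replaced by $\mu$, using \eqref{switch} to relate the two and to produce the factor $1/|\Gamma(ik-\tfrac12-\mu)|$ claimed on the right-hand side. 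The key point is that the two numerator parameters of this $\mathbf F$ are complex conjugates (for real $k$), so each partial sum of the defining power series has terms $\frac{(a)_n(\bar a)_n}{n!\,\Gamma(\mu+1+n)}(-\sinh^2 x)^n=\frac{|(a)_n|^2}{n!\,\Gamma(\mu+1+n)}(-\sinh^2x)^n$ with real, nonnegative coefficients and alternating sign.

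The heart of the argument is therefore to show that this alternating series is bounded in absolute value by its leading term, which is $1/\Gamma(\mu+1)$ for Olver's normalisation. For $k\ge 1$ and $\mu\ge 0$ one checks that the ratio of consecutive absolute coefficients,
\[
\frac{|a+n|^2}{(n+1)(\mu+1+n)}=\frac{\big(\tfrac{\mu+1/2}{2}+n\big)^2+\big(\tfrac k2\big)^2}{(n+1)(\mu+1+n)},
\]
is at most $1$ for all $n\ge 0$ precisely when $k$ is not too small; the worst case is $n=0$, giving the condition $\big(\tfrac{\mu+1/2}{2}\big)^2+\tfrac{k^2}{4}\le (\mu+1)$, equivalently $k^2\le 4(\mu+1)-(\mu+\tfrac12)^2$. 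For $\mu\in\N$ this right-hand side is $\ge 4-\tfrac14>1$, so $k\ge 1$ suffices; and if $\mu\ge 1$ one gets strictly more room, covering $0<k<1$ as well (here I would plug in $\mu=1$ to see the bound $k^2\le 8-\tfrac94=\tfrac{23}{4}$, which is comfortably bigger than $1$). Once the coefficient ratio is $\le 1$ and decreasing, a standard alternating-series estimate gives $|\mathbf F|\le 1/\Gamma(\mu+1)$ pointwise in $x$.

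I would then assemble the pieces: $|\sqrt{\sinh x}\,\mathsf{P}^{\mu}_{ik-1/2}(\cosh x)|\le (\sinh x)^{\mu+1/2}\cdot|C_{\mu,k}|\cdot\frac{1}{\Gamma(\mu+1)}$, and verify that the Gamma-function constant $C_{\mu,k}$ arising from \eqref{switch} and the normalisations collapses exactly to $1/|\Gamma(ik-\tfrac12-\mu)|$ (using the reflection/shift identities $\Gamma(ik+\tfrac12+\mu)\Gamma(\tfrac12-ik-\mu)=\pi/\cos(\pi(ik+\mu))$ and $|\Gamma(ik-\tfrac12-\mu)|=|\Gamma(\overline{ik-\frac12-\mu})|$, together with $\Gamma(\mu+1)=\mu!$). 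The main obstacle is purely bookkeeping: getting the constant in \eqref{conical_P}-with-$+\mu$ exactly right and matching it to $1/|\Gamma(ik-\tfrac12-\mu)|$, since several equivalent normalisations of $\mathsf P^\mu_\nu$ and of $\mathbf F$ are in circulation and an off-by-a-Gamma-factor slip is easy here; the analytic core (alternating series dominated by its first term) is short once the coefficient-ratio inequality above is recorded. A minor point worth stating explicitly is that for $\mu\in\N$ the function is entire in $\sinh^2 x$, so the series converges for all real $x$ and the bound is genuinely global, as claimed.
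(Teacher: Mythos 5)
Your proposal takes a genuinely different route from the paper --- it bounds the hypergeometric factor via its power series and an alternating-series argument, whereas the paper's proof uses the Euler integral representation \eqref{hypo} of $\mathbf F$, then the reflection relation \eqref{switch}, the duplication formula \eqref{doubling}, and $\Gamma(\overline z)=\overline{\Gamma(z)}$ to produce the $1/|\Gamma(ik-\tfrac12-\mu)|$ factor directly. Unfortunately the series route as you set it up does not work, for two independent reasons.

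First, the power series for $\mathbf F\big(\tfrac{\mu+\frac12+ik}{2},\tfrac{\mu+\frac12-ik}{2};\mu+1;\,-\sinh^2 x\big)$ has radius of convergence $1$ in the variable $z=-\sinh^2 x$. For real $k\neq 0$ neither numerator parameter is a non-positive integer, so the series never terminates; consequently the function is \emph{not} entire in $\sinh^2 x$, contrary to your closing remark, and the series representation you are manipulating simply does not converge once $|\sinh x|>1$. The lemma is claimed for all $x\in\R$, so a series estimate that is only meaningful on $|\sinh x|<1$ cannot close the proof without an analytic continuation step that you have not supplied. (The paper sidesteps this entirely by using the integral representation with argument $\tanh^2 x\in[0,1)$, which is always in the domain of convergence.)

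Second, and more decisively, your ratio computation points in the wrong direction. You correctly compute the consecutive-coefficient ratio as
\begin{align*}
\frac{\big(\tfrac{\mu+1/2}{2}+n\big)^2+\big(\tfrac k2\big)^2}{(n+1)(\mu+1+n)},
\end{align*}
and correctly observe that at $n=0$ the requirement that this be $\le 1$ reads $k^2\le 4(\mu+1)-(\mu+\tfrac12)^2$. But that is an \emph{upper} bound on $k$, not a lower bound: the inequality fails as soon as $k$ is large. For $\mu=0$ it already fails at $k\approx 1.94$, and for $k\to\infty$ the $n=0$ ratio grows like $k^2/4$. Your conclusion ``so $k\ge1$ suffices'' is therefore a non sequitur, and the alternating-series bound $|\mathbf F|\le 1/\Gamma(\mu+1)$ is not available in the regime $k$ large that the lemma actually requires. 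The hypothesis $k\ge 1$ in the lemma is not there to make an alternating series monotone; it enters at a different stage in the paper's argument. You would need an entirely different mechanism to control $\mathbf F$ for large $k$, and the Euler integral (which gives a $(1-\tanh^2 x)^{-\re a}$ factor combining with $(\tanh x)^{\mu+1/2}$ to produce $(\sinh x)^{\mu+1/2}$) is the natural one.
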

\begin{proof}
The hypergeometric function has the following integral representation, cf.\@ \cite[(15.6.1)]{NIST}:
\begin{align}\label{hypo}
\mathbf{F}\!\left(a,b;c;z\right)=\frac{1}{\Gamma\left(b\right)\Gamma\left(c-b%
\right)}\int_{0}^{1}\frac{t^{b-1}(1-t)^{c-b-1}}{(1-zt)^{a}}\,\mathrm{d}t, \quad z\in \mathbb{C},
\end{align}
whenever $|\mathrm{arg}(1-z)|<\pi$, $\re c>0$, $\re b>0$.
We use \eqref{switch} and \eqref{hypo} along with the representation in \eqref{hyper} multiplied by $2^{-\mu}=B$. This gives the bound:
\begin{align}
|\sqrt{\sinh(x)}\mathsf{P}^{\mu}_{ik-\frac{1}{2}}(\cosh x)|\leq \left| \frac{2^{-\mu}((\tanh(x))^{\mu+\frac{1}{2}}(1-\tanh^2(x))^{-\frac{1}{2}(\mu+\frac{1}{2})}\Gamma(ik+\frac{1}{2}+\mu)}{\Gamma(ik-\frac{1}{2}-\mu)\Gamma(\frac{\mu}{2}+\frac{1}{4}+\frac{ik}{2})\Gamma(\frac{\mu}{2}+\frac{3}{4}-\frac{ik}{2})}\right|.
\end{align}
Applying \eqref{doubling} yields
\begin{align}
|\sqrt{\sinh(x)}\mathsf{P}^{\mu}_{ik-\frac{1}{2}}(\cosh x)|\leq \frac{|\sinh(x)|^{\mu+\frac{1}{2}}|\Gamma(\frac{\mu}{2}+\frac{3}{4}+\frac{ik}{2})|}{|\Gamma(ik-\frac{1}{2}-\mu)\Gamma(\frac{\mu}{2}+\frac{3}{4}-\frac{ik}{2})|}.
\end{align}
Using $\Gamma(\overline{z})=\overline{\Gamma(z)}$ for $z\in\mathbb{C}$, $z\ni \{-1,-2,-3,\dots\}$ entails the result. 
\end{proof}

\begin{lemma}\label{midbound}
For $\mu\in \mathbb{N}$, $k \in \mathbb{R}^+$ and $z\in [0,1)$ the hypergeometric function satisfies 
\begin{align}
2^{-\mu}\mathbf{F}\!\left(\frac{1}{2}\Big(\mu+\frac{1}{2}+ik\Big),\frac{1}{2}\Big(\mu+\frac{1}{2}+1+ik\Big),\mu+1,z\right)<C_z\sqrt{\cosh(k)},
\end{align} 
where $C_z$ depends only on $z$. Therefore,
\begin{align}
|\sqrt{\sinh(x)}\mathsf{P}_{ik-\frac{1}{2}}^{\mu}(\cosh(x))|\leq C_x\sqrt{\cosh(k)}\left|\frac{\Gamma(\frac{1}{2}+\mu+ik)}{\Gamma(\frac{1}{2}-\mu+ik)}\right|,
\end{align} 
where $C_x$ depends only on $x$. 
\end{lemma}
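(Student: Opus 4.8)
The plan is to prove the two displayed inequalities in succession: the bound on the hypergeometric function carries all of the analytic content, and the bound on the weighted associated Legendre function then follows from it by the elementary identities already recorded above. Throughout I read the first inequality as a bound on the modulus, $|2^{-\mu}\mathbf{F}(\cdots)|<C_z\sqrt{\cosh k}$, since the hypergeometric argument here is complex.

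First I would prove the hypergeometric estimate from the Euler integral \eqref{hypo}. Put $a=\tfrac12(\mu+\tfrac12+ik)$, $b=a+\tfrac12=\tfrac12(\mu+\tfrac32+ik)$ and $c=\mu+1$; then $\re a=\tfrac12(\mu+\tfrac12)>0$ and $\re(c-a)=\tfrac12(\mu+\tfrac32)>0$, so applying \eqref{hypo} to $\mathbf{F}(a,b;c;z)=\mathbf{F}(b,a;c;z)$ with $a$ in the role of the integration parameter gives
\begin{align*}
\big|2^{-\mu}\mathbf{F}(b,a;c;z)\big|\;\le\;\frac{2^{-\mu}}{|\Gamma(a)\Gamma(c-a)|}\int_0^1 t^{\re a-1}(1-t)^{\re(c-a)-1}(1-zt)^{-\re b}\,\dx t ,
\end{align*}
using $|(1-zt)^{-b}|=(1-zt)^{-\re b}$ for $1-zt>0$. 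Bounding $(1-zt)^{-\re b}\le(1-z)^{-\re b}=(1-z)^{-\frac12(\mu+\frac32)}$ on $[0,1]$ turns the integral into the Beta value $\Gamma(\re a)\Gamma(\re(c-a))/\Gamma(\mu+1)$. Two applications of the duplication formula \eqref{doubling}, combined with $|\Gamma(\overline z)|=|\Gamma(z)|$ to flip the sign of the imaginary part in $\Gamma(c-a)=\Gamma(\tfrac\mu2+\tfrac34-\tfrac{ik}2)$, give $\Gamma(\re a)\Gamma(\re(c-a))=2^{\frac12-\mu}\sqrt\pi\,\Gamma(\mu+\tfrac12)$ and $|\Gamma(a)\Gamma(c-a)|=2^{\frac12-\mu}\sqrt\pi\,|\Gamma(\mu+\tfrac12+ik)|$. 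After the powers of $2$ and the $\sqrt\pi$ cancel one is left with
\begin{align*}
\big|2^{-\mu}\mathbf{F}(b,a;c;z)\big|\;\le\;\frac{(1-z)^{-3/4}}{\mu!}\Big(\tfrac12(1-z)^{-1/2}\Big)^{\mu}\,\frac{\Gamma(\mu+\tfrac12)}{|\Gamma(\mu+\tfrac12+ik)|}.
\end{align*}
Reducing $\Gamma(\mu+\tfrac12+ik)$ by the functional equation down to $\Gamma(\tfrac12+ik)$, using $|\tfrac12+j+ik|\ge\tfrac12+j$ for the intermediate factors and the reflection identity $|\Gamma(\tfrac12+ik)|^2=\pi/\cosh(\pi k)$, yields $\Gamma(\mu+\tfrac12)/|\Gamma(\mu+\tfrac12+ik)|\le\sqrt{\cosh(\pi k)}$; and $\mu!^{-1}\big(\tfrac12(1-z)^{-1/2}\big)^\mu$ is a single term of the series for $\exp\big(\tfrac12(1-z)^{-1/2}\big)$, hence bounded by a constant depending only on $z$. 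Taking $C_z:=(1-z)^{-3/4}\exp\big(\tfrac12(1-z)^{-1/2}\big)$ proves the bound (with $\cosh k$ replaced by $\cosh(\pi k)$, which is what this argument produces).

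Next I would deduce the Legendre bound. Specializing \eqref{psi_k}--\eqref{hyper} to $A=0$, $B=2^{-\mu}$ identifies the left-hand side of \eqref{conical_P} as
\begin{align*}
\sqrt{\sinh(x)}\,\mathsf{P}^{-\mu}_{ik-\frac12}(\cosh x)=2^{-\mu}(\tanh x)^{\mu+\frac12}(\cosh x)^{-ik}\,\mathbf{F}\!\Big(\tfrac12\big(\mu+\tfrac12+ik\big),\tfrac12\big(\mu+\tfrac32+ik\big),\mu+1,\tanh^2 x\Big).
\end{align*}
Passing from $\mathsf{P}^{-\mu}$ to $\mathsf{P}^{\mu}$ by \eqref{switch} with $m=\mu$, $\nu=ik-\tfrac12$ introduces the factor $\Gamma(\tfrac12+\mu+ik)/\Gamma(\tfrac12-\mu+ik)$; taking moduli, using $0\le\tanh^2 x<1$ so that $(\tanh x)^{\mu+\frac12}\le1$ and $|(\cosh x)^{-ik}|=1$, and inserting the first part with $z=\tanh^2 x$, gives the second inequality with $C_x:=C_{\tanh^2 x}$, a function of $x$ alone (which blows up as $x\to\infty$, as expected).

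The main obstacle is keeping the constant uniform in $\mu$ while extracting the correct $k$-growth: the pointwise estimate $|(1-zt)^{-b}|\le(1-z)^{-\re b}$ discards the rapid oscillation $e^{(ik/2)\log(t/(1-zt))}$ of the Euler integrand, and the whole point is that the $\mu$-dependent debris it leaves behind reorganizes --- via the two uses of \eqref{doubling} --- into the bounded quantity $\mu!^{-1}\big(\tfrac12(1-z)^{-1/2}\big)^\mu$ and the $\mu$-independent ratio $\Gamma(\mu+\tfrac12)/|\Gamma(\mu+\tfrac12+ik)|$, which the reflection formula controls. Recovering the literal $\sqrt{\cosh k}$ in place of $\sqrt{\cosh(\pi k)}$, if it is genuinely needed rather than a convenient abuse absorbed by later constants, would require a finer endpoint (stationary-phase) analysis of the same integral that exploits precisely this oscillation near $t=0$ and $t=1$.
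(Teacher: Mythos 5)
Your proof is correct, but it follows a genuinely different path from the paper's. The paper bounds the hypergeometric series term-by-term via the Pochhammer-ratio asymptotic $\frac{(a)_n(b)_n}{(c)_n\,n!}\sim \frac{\Gamma(c)}{\Gamma(a)\Gamma(b)}n^{\re(a+b-c)-1}$ (here $-\tfrac{3}{2}$), then isolates the prefactor $\Gamma(\mu+1)/\big(\Gamma(a)\Gamma(b)\big)$ and appeals to \eqref{gammahalf} for the $k$-growth; your route instead runs the Euler integral \eqref{hypo} through the crude bound $|(1-zt)^{-b}|\le(1-z)^{-\re b}$, evaluates the Beta factor, and collapses everything with two applications of the duplication formula. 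Your version has real advantages: the constant is explicit and manifestly uniform in $\mu$ (the $\mu$-dependence condenses into a single term $\tfrac{1}{\mu!}\big(\tfrac12(1-z)^{-1/2}\big)^\mu$ of an exponential series), whereas the paper's argument only controls the large-$n$ tail of the series and leaves the $\mu$-uniformity implicit; and both $k$-bounds are established rather than asserted. You are also right to flag the $\sqrt{\cosh(\pi k)}$ versus $\sqrt{\cosh k}$ discrepancy: \eqref{gammahalf} produces $\cosh(\pi k)$, not $\cosh(k)$, and this is visible in the paper's own appeal to that identity; the statement as printed appears to abuse notation. Since the lemma is only invoked in Theorem \ref{thm_existence} for $k\le k_0$ fixed, the factor is harmless there, so no downstream damage occurs --- but your observation is accurate and worth recording. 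The derivation of the second inequality from the first via \eqref{psi_k}--\eqref{hyper} with $A=0$, $B=2^{-\mu}$, \eqref{switch} and $0\le\tanh^2 x<1$ is essentially the same in both treatments.
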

\begin{proof}
Using the definition of the series for $\mathbf{F}(a,b,c,z)$ where $a,b,c,z$ are complex parameters we see that 
\begin{align}
\mathbf{F}(a,b,c,z)=\sum\limits_{n=0}^{\infty}\frac{(a)_n(b)_n}{(c)_n}\frac{1}{n!}z^n
\end{align}
where $(w)_n$ is the Pochhammer symbol defined by 
\begin{align}
(w)_n=\begin{cases}
1 \quad n=0 \\
w(w+1) \dots (w-n+1) \quad n\neq 0.
\end{cases}
\end{align}
For large $n$ we have that 
\begin{align}
&\frac{(a)_n(b)_n}{(c)_n}\frac{1}{n!}=
\frac{\Gamma(c)}{\Gamma(a)\Gamma(b)} \frac{\Gamma(a+n)}{\Gamma(n)}\frac{\Gamma(b+n)}{\Gamma(n)}\frac{\Gamma(n)}{\Gamma(c+n)}\frac{1}{n}\sim \frac{\Gamma(c)}{\Gamma(a)\Gamma(b)}n^{a+b-c-1}
\end{align}
where we have used that $\Gamma(\beta+n)\sim \Gamma(n)n^{\beta} \quad n\rightarrow \infty$, $\beta\in\mathbb{C}$. For the particular values of $a,b,c$ in question we see that $\mathrm{Re}(a+b-c-1)=-\frac{1}{2}$. The coefficient 
\begin{align}
\frac{\Gamma(\mu+1)}{\Gamma(\frac{1}{2}(\mu+\frac{1}{2})+ik)\Gamma(\frac{1}{2}(\mu+\frac{1}{2})+1+ik)}=C(k) 2^{\mu}
\end{align}
for large $\mu$, where $C(k)<\infty$ depends on $k$. 
For large $k$ this grows like $\sqrt{\cosh(k)}$ by \eqref{gammahalf}. 
It follows that 
\begin{align}
|\sqrt{\sinh(x)}\mathsf{P}_{ik-\frac{1}{2}}^{\mu}(\cosh(x))|\leq C_x\sqrt{\cosh(k)}\left|\frac{\Gamma(\frac{1}{2}+\mu+ik)}{\Gamma(\frac{1}{2}-\mu+ik)}\right|,
\end{align} 
where $C_x$ is independent of $k,\mu$ but depends on $x\in\R$. 
\end{proof}

\begin{lemma}\label{largemu}
For large positive $\mu\in\mathbb{N}$ and fixed $k$, there is a $\beta_x\in (0,\infty)$ increasing in $x$ such that there is a constant $C$ which is independent of the other variables with 
\begin{align}
|\mathsf{P}^{\mu}_{ik-\frac{1}{2}}\left(\cosh(x)\right)|\leq C\left|\frac{\e^{-\mu\beta_x}}{\Gamma(\frac{1}{2}-\mu+ik)}\right|.
\end{align}
\end{lemma}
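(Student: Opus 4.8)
The plan is to start from the same closed-form representation used in the proof of Lemma \ref{tbound}, namely the hypergeometric representation of $\mathsf{P}^{\mu}_{ik-\frac{1}{2}}(\cosh x)$ obtained from \eqref{conical_P}–\eqref{hyper} with $B=2^{-\mu}$, $A=0$, combined with the connection formula \eqref{switch}. This gives
\[
\mathsf{P}^{\mu}_{ik-\frac{1}{2}}(\cosh x) = \frac{\Gamma(ik+\tfrac12+\mu)}{\Gamma(ik-\tfrac12-\mu)}\,\frac{2^{-\mu}(\tanh x)^{\mu+\frac12}(1-\tanh^2 x)^{-\frac12(\mu+\frac12)}}{(\sinh x)^{1/2}\,\Gamma(\tfrac\mu2+\tfrac14+\tfrac{ik}2)\Gamma(\tfrac\mu2+\tfrac34-\tfrac{ik}2)}\,\mathbf F(\dots),
\]
so that the ratio $\Gamma(\tfrac12-\mu+ik)^{-1}$ is already isolated on the right-hand side. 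What remains is to show that the rest of the expression is bounded by $C\e^{-\mu\beta_x}$ for a suitable $\beta_x>0$ increasing in $x$, uniformly in $\mu$ large (with $k$ fixed).

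First I would track the elementary $x$-dependent factors. Writing $t=\tanh x\in[0,1)$, we have $(\tanh x)^{\mu+\frac12}(1-\tanh^2 x)^{-\frac12(\mu+\frac12)} = \big(t/\sqrt{1-t^2}\big)^{\mu}\cdot(\text{bounded in }\mu) = (\sinh x)^{\mu}\cdot(\text{bounded})$, using $t/\sqrt{1-t^2}=\sinh x$. Combining with the $2^{-\mu}$ prefactor, the $\mu$-dependent elementary part is $(\tfrac12\sinh x)^{\mu}$ up to a factor bounded uniformly in $\mu$. Next I would handle the Gamma quotient $\Gamma(ik+\tfrac12+\mu)/[\Gamma(\tfrac\mu2+\tfrac14+\tfrac{ik}2)\Gamma(\tfrac\mu2+\tfrac34-\tfrac{ik}2)]$: by the duplication formula \eqref{doubling} the denominator is, up to elementary factors and a factor $2^{\mu}$, comparable to $\Gamma(\mu+\tfrac12+ik)$ (this is exactly the manipulation already performed in Lemma \ref{tbound}), so this quotient is bounded uniformly in large $\mu$. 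Finally, for the hypergeometric factor $\mathbf F$ with argument $z=\tanh^2 x\in[0,1)$ and parameters growing linearly in $\mu$, I would use the series/integral representation together with the fact that the relevant growth rate of the coefficients (computed in the proof of Lemma \ref{midbound}, giving exponent $\operatorname{Re}(a+b-c-1)=-\tfrac12$) shows $\mathbf F = \bigO(1)$ in $\mu$ — more carefully, one may bound $|\mathbf F(a,b;c;z)|\le (1-z)^{-\operatorname{Re} b}$-type estimates from \eqref{hypo}, which for $z=\tanh^2 x$ contributes only an $x$-dependent, $\mu$-independent constant.

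Assembling the pieces, the $\mu$-dependent part of $|\mathsf P^{\mu}_{ik-\frac12}(\cosh x)|\cdot|\Gamma(\tfrac12-\mu+ik)|$ is controlled by $(\tfrac12\sinh x)^{\mu}$, and whenever we work in the regime $\tfrac12\sinh x<1$, i.e. $x$ in a bounded interval, this is $\e^{-\mu\beta_x}$ with $\beta_x=\log\big(2/\sinh x\big)>0$. For larger $x$ the claimed statement with $\beta_x$ increasing in $x$ requires instead using the decaying (Legendre-$Q$-type / second-kind) behavior at infinity; concretely one reinstates the hypergeometric identity \eqref{hypo} with the integral taken so that the $(1-zt)^{-a}$ factor is estimated by its value giving exponential gain as $\cosh x\to\infty$, using the standard asymptotics $\mathsf P^{\mu}_{ik-1/2}(\cosh x)\sim c\,(\cosh x)^{ik-1/2}$ versus the $\Gamma$-normalization. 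The main obstacle is precisely this last point: getting a clean exponential bound $\e^{-\mu\beta_x}$ that is genuinely increasing in $x$ over the whole range $x\in(0,\infty)$ rather than just the decay coming from $(\tfrac12\sinh x)^\mu$ on a bounded set; handling the transition and ensuring uniformity of $C$ in $x$ (or restricting, as the applications in Sections \ref{analysis}–\ref{disperse} allow, to a compact $x$-interval bounded away from $0$) is the delicate step, whereas the Gamma-function bookkeeping is routine given Lemmas \ref{tbound} and \ref{midbound}.
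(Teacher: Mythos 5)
Your proposal runs through a fundamentally different (and, in this form, broken) route than the paper, which does not attempt the hypergeometric route at all: it instead invokes the uniform large-order asymptotics \cite[(14.15.2)]{NIST}, expressing $\mathsf{P}^{-\mu}_{\nu}(x)$ in terms of $\frac{1}{\Gamma(\mu+1)}\big(\tfrac{2\mu\xi}{\pi}\big)^{1/2}K_{\nu+1/2}(\mu\xi)$ with $\xi=\ln\frac{x+1}{x-1}$, then uses the large-argument decay $K_\nu(z)\sim\sqrt{\pi/(2z)}\,\e^{-z}$ and the connection formula \eqref{switch} to re-express $\Gamma(\mu+1)$ as $\Gamma(\tfrac12-\mu+ik)$ up to a bounded quotient. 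The exponential factor $\e^{-\mu\xi}$ is produced directly by the Bessel decay; no hypergeometric bookkeeping is needed.

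The gap in your argument is concrete and not merely the ``delicate large-$x$ step'' you flag at the end. Your claim that the hypergeometric factor $\mathbf{F}$ is $\bigO(1)$ in $\mu$ does not follow from the computation in Lemma \ref{midbound}: there the exponent $\re(a+b-c-1)=-\tfrac12$ governs the decay of the $n$-th series coefficient with $\mu$ and $k$ \emph{fixed}; it says nothing about the regime $\mu\to\infty$ with $a,b\sim\mu/2$ and $c=\mu+1$ all growing proportionally. In that regime, with $z=\tanh^2 x\in(0,1)$ fixed, the Pochhammer ratio gives $\frac{(a)_n(b)_n}{(c)_n\,n!}z^n\approx \frac{(\mu z/4)^n}{n!}$ for $n\lesssim\mu$, so $\mathbf{F}$ in fact grows on the order of $\e^{\mu z/4}$ rather than staying bounded; moreover the true behavior $\mathsf{P}^{-\mu}_{ik-1/2}(\cosh x)\sim\frac{\e^{-\mu\xi}}{\Gamma(\mu+1)}$ with $\e^{-\xi}=\tanh^2(x/2)$ shows that the correct exponential scale is $\tanh^{2\mu}(x/2)$, which is \emph{not} what $(\tfrac12\sinh x)^\mu$ alone gives once $\sinh x$ is not small (and the latter even grows for $\sinh x>2$). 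In other words, the exponential decay in $\mu$ you are asked to prove resides in the hypergeometric factor itself, not in the elementary prefactors you isolate, and the argument you propose would require precisely the uniform-in-$\mu$ asymptotics (the Bessel-type expansion) that the paper uses instead.
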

\begin{proof}
Let $\xi=\ln (\frac{x+1}{x-1})$, $x\geq 1$ and $K$ be the Bessel function of the second kind with imaginary argument whose large $z$ asymptotic according to \cite[(10.25.3)]{NIST} is
\begin{equation*}
	K_{\nu}\left(z\right)\sim\sqrt{\pi/(2z)} \e^{-z}.
\end{equation*}
This is uniform for $|\mathrm{Re}z|\geq |\mathrm{Re}\nu|$, by Theorem 3.2 and the connection formula at the bottom of page 4 in \cite{daalhuis}.  
By \cite[(14.15.2)]{NIST} one has for $\mu\xi\geq |\nu|$, $x\geq 1$ that 
\begin{align}
\mathsf{P}^{-\mu}_{\nu}\left(x\right)=\frac{1}{\Gamma\left(\mu+1\right)}\left(\frac{2
\mu \xi}{\pi}\right)^{1/2}K_{\nu+\frac{1}{2}}\left(\mu \xi\right)\left(1+O\!\left(
\frac{1}{\mu}\right)\right),
\end{align}
which yields the result. 
\end{proof}

\section{The Kernel Representation}\label{analysis}
\noindent
In this section we construct the solution operator and its kernel for the P\"oschl-Teller initial value problem with vanishing Dirichlet boundary conditions.

Let
\begin{align}
	N(\tau,\phi_1) \coloneqq \frac{(\cosh\tau - \cos\phi_1)}{a}.
\end{align}
It holds that $N(\tau,\phi_1)>0$ for all $(\tau,\phi_1)\in(0,\infty)\times(-\pi,\pi]$.
The following straightforward lemma ensures a uniform lower bound for $N$ within arbitrarily large neighborhoods of the torus.
\begin{lemma}\label{Nbelow}
Let $0<\epsilon_1<\tau_1$ be arbitrarily small and consider the subset
\begin{equation*}
	V_{\epsilon_1}\coloneqq\{(\tau,\phi_1)\in [0,\infty)\times (-\pi,\pi]\colon \tau>\epsilon_1\}
	\cap
	\{(\tau,\phi_1)\in [0,\infty)\times (-\pi,\pi]\colon |\phi_1|<\epsilon_1\}.
\end{equation*}
Then there exists some $\eta(\epsilon_1)>0$ such that 
\begin{equation*}
 N(\tau,\phi_1) > \eta(\epsilon_1) \quad\text{for all}\quad (\tau,\phi_1)\in \big([0,\infty) \times 	(-\pi,\pi]\big)\setminus V_{\epsilon_1}.
\end{equation*}
\end{lemma}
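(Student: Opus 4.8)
The plan is to reduce the bound to two one–variable estimates and to isolate the single place where the toroidal chart degenerates. First I would record the only structural fact about $V_{\epsilon_1}$ that is needed: on $\big([0,\infty)\times(-\pi,\pi]\big)\setminus V_{\epsilon_1}$ at least one of the conditions $\tau\ge\epsilon_1$ or $\epsilon_1\le|\phi_1|\le\pi$ holds, since $V_{\epsilon_1}$ is precisely the small box around the coordinate singularity $(\tau,\phi_1)=(0,0)$ (the point at infinity of $M$), which is the only locus at which $\cosh\tau-\cos\phi_1$ can be driven below a fixed positive level. So it suffices to bound $N$ from below on the two closed sets $A_{\epsilon_1}\coloneqq\{\tau\ge\epsilon_1\}$ and $B_{\epsilon_1}\coloneqq\{\epsilon_1\le|\phi_1|\le\pi\}$.

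On $A_{\epsilon_1}$, monotonicity of $\cosh$ on $[0,\infty)$ gives $\cosh\tau\ge\cosh\epsilon_1$, while $\cos\phi_1\le 1$, hence $a\,N(\tau,\phi_1)=\cosh\tau-\cos\phi_1\ge\cosh\epsilon_1-1$. On $B_{\epsilon_1}$, evenness of $\cos$ and its monotonicity on $[0,\pi]$ give $\cos\phi_1\le\cos\epsilon_1$, while $\cosh\tau\ge 1$, hence $a\,N(\tau,\phi_1)=\cosh\tau-\cos\phi_1\ge 1-\cos\epsilon_1$. Since $0<\epsilon_1<\tau_1$ and in particular $\epsilon_1\in(0,\pi)$, both lower bounds are strictly positive, and it suffices to put
\[
\eta(\epsilon_1)\ \coloneqq\ \tfrac{1}{2a}\,\min\{\cosh\epsilon_1-1,\ 1-\cos\epsilon_1\}\ >\ 0 ,
\]
which then satisfies $N(\tau,\phi_1)>\eta(\epsilon_1)$ on $\big([0,\infty)\times(-\pi,\pi]\big)\setminus V_{\epsilon_1}$; the only dependences are on $\epsilon_1$ and on the fixed constant $a$.

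The estimate is elementary, as the statement's wording anticipates, so there is no genuine obstacle; the one point that deserves a moment's care is the opening reduction, namely the observation that $\cosh\tau-\cos\phi_1\to 0$ occurs only as $(\tau,\phi_1)\to(0,0)$, which is why a uniform positive lower bound for $N$ fails on all of $M$ and requires deleting the small neighborhood $V_{\epsilon_1}$ of that point. If one wished to avoid exhibiting $\eta(\epsilon_1)$ explicitly, an equivalent route is a compactness argument for the continuous, strictly positive function $N$ on the set $B_{\epsilon_1}\cup\{\epsilon_1\le\tau\le\tau_1\}$, but the direct inequalities above render it superfluous.
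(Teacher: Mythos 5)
Your proof is computationally right, but it silently repairs an inequality in the lemma's statement, and that needs to be called out rather than absorbed. As printed, $V_{\epsilon_1}=\{\tau>\epsilon_1\}\cap\{|\phi_1|<\epsilon_1\}$, so $\big([0,\infty)\times(-\pi,\pi]\big)\setminus V_{\epsilon_1}=\{\tau\le\epsilon_1\}\cup\{|\phi_1|\ge\epsilon_1\}$. That set contains $(\tau,\phi_1)=(0,0)$, where $N=0$, so with the printed definition the claim is false. Your opening reduction instead takes $\big([0,\infty)\times(-\pi,\pi]\big)\setminus V_{\epsilon_1}=\{\tau\ge\epsilon_1\}\cup\{|\phi_1|\ge\epsilon_1\}$, which corresponds to $V_{\epsilon_1}=\{\tau<\epsilon_1\}\cap\{|\phi_1|<\epsilon_1\}$, a box around the unique zero of $N$. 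That is clearly the intended set: both Figure~\ref{fig:V} (the region that ``lies far away from the torus'', i.e.\@ small $\tau$) and the subsequent use of $h_1=hN\ge h\eta$ in Section~\ref{disperse} require it. So the $\tau>\epsilon_1$ in the statement should read $\tau<\epsilon_1$; your proof assumes this correction without flagging it.

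With that fix the estimate is exactly as you wrote: on $\{\tau\ge\epsilon_1\}$ one has $\cosh\tau\ge\cosh\epsilon_1>1\ge\cos\phi_1$, and on $\{\epsilon_1\le|\phi_1|\le\pi\}$ one has $\cos\phi_1\le\cos\epsilon_1<1\le\cosh\tau$, so $aN\ge\min\{\cosh\epsilon_1-1,\,1-\cos\epsilon_1\}>0$, and halving gives a strict inequality. The paper offers no proof (it calls the lemma ``straightforward''), so there is no argument to compare against, and yours is the natural one. Two minor points: $\epsilon_1\in(0,\pi)$ does not follow from $\epsilon_1<\tau_1$ alone, since nothing forces $\tau_1<\pi$; appeal instead to $\epsilon_1$ being ``arbitrarily small'', or just note $1-\cos\epsilon_1>0$ for all $\epsilon_1\in(0,2\pi)$. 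And the compactness alternative you sketch is not quite available as stated, since neither $\{\tau\ge\epsilon_1\}$ nor $B_{\epsilon_1}$ is compact in $[0,\infty)\times(-\pi,\pi]$; one first needs the monotonicity of $\cosh\tau$ to truncate the $\tau$-range, at which point the direct inequalities have already done the work.
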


\begin{figure}[h]
	\begin{center}
		\includegraphics*[width=0.55\textwidth]{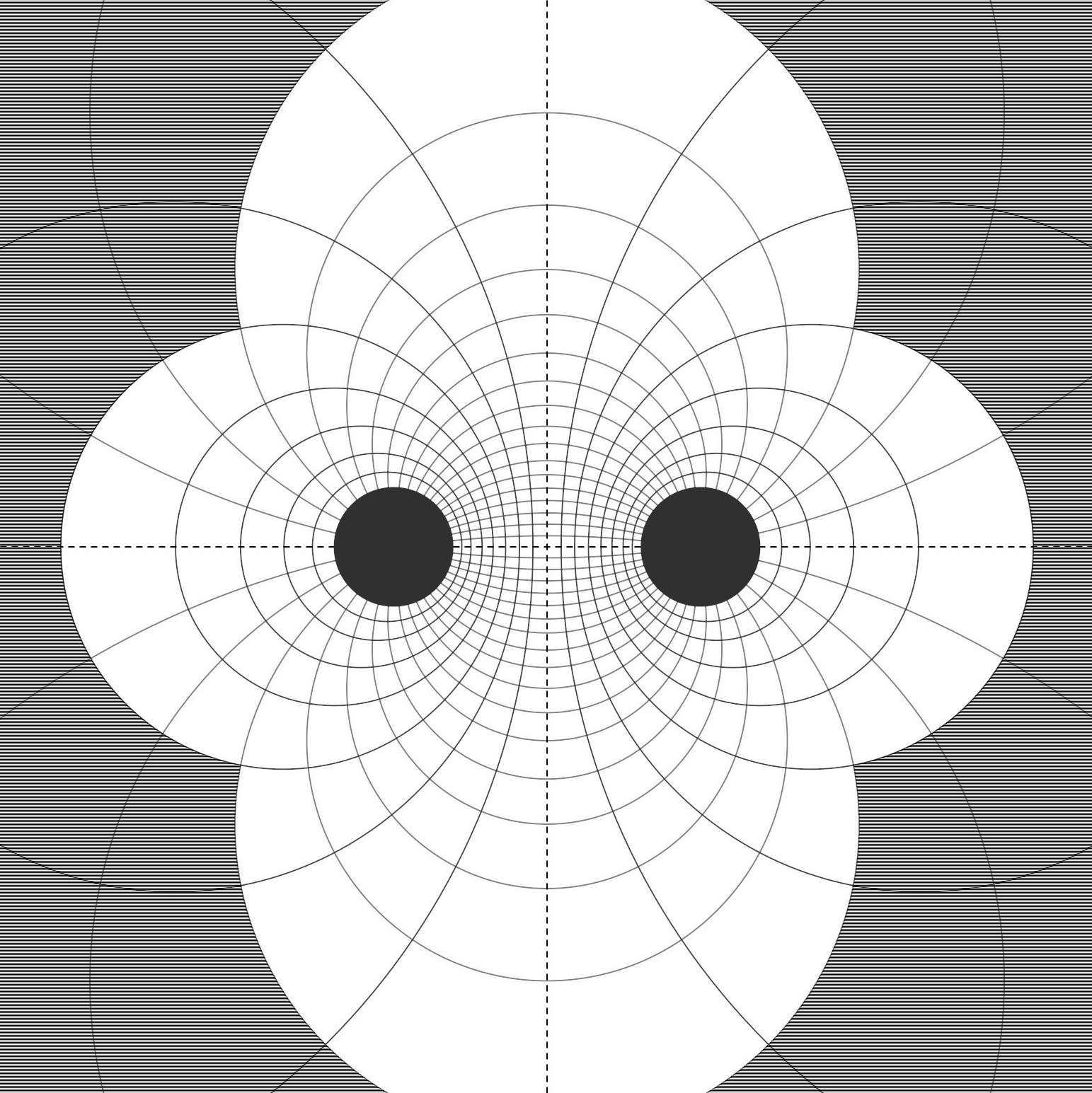}
	\end{center}
	\caption{A two-dimensional cross section: the solid torus is indicated by the two dark discs; the outer striped (gray) area shows the set $V_{\epsilon_1}$ from Lemma \ref{Nbelow}; solid lines depict $\phi_1$ and $\tau$ isosurfaces.} 
	\label{fig:V}
\end{figure}

\begin{remark}
In this section we are not requiring a positive uniform lower bound for $N$; Lemma \ref{Nbelow} will become relevant for the dispersive estimates in Section \ref{disperse}. 	
	
For small $\epsilon_1$, the region $V_{\epsilon_1}$, which is depicted in Fig.\@ \ref{fig:V}, lies far away from the torus so that trapping is not expected to occur. This region could be analyzed using Lax-Phillips type scattering arguments from \cite[Section 6]{YFW}, which have been adapted from \cite{LP} and \cite{DZbook}. These arguments could be used to remove the $\eta$-dependence using Huygens' principle. In other words, in $V(\epsilon_1)$ one expects the dispersive rate to be as in the obstacle-free Euclidean space. For this reason we will not investigate this case here.
\end{remark}

Next we define
\begin{align}
	\Delta_{\xi_1,\xi_2,\tau}\coloneqq-\xi^2_1 -\csch^2(\tau)(|\xi_2|^2-\frac{1}{4})+\partial_{\tau}^2,
\end{align}
which corresponds to the Fourier transform of $\Delta_P$ without the prefactor $N^2$.  
\begin{definition}
For $k\in\R^+$ and $\mu\in\Z^+_0$ we define the functions $e_{k,\mu}\colon \R^+_0\to\C$ by
\begin{align}
	e_{k,\mu}(\tau) \coloneqq c_{k,\mu} \sqrt{\sinh\tau}\,\mathsf{P}_{ik-\frac{1}{2}}^{\mu}(\cosh\tau),
\end{align}
where
\begin{align} \label{def_c_kmu}
	c_{k,\mu} \coloneqq \sqrt{\frac{k \sinh(\pi k)}{\pi}\Gamma\Big(\frac{1}{2}-\mu+ik\Big)\Gamma\Big(\frac{1}{2}-\mu-ik\Big)}.
\end{align}
\end{definition}
We have that $e_{k,\mu}(\tau)=K^\mu(k,\tau)$, see Appendix \ref{appB}. 

\begin{lemma} \label{lem_classical_sol}
For every $k\in\R^+$ and $\mu,m\in\Z^+_0$, $e_{k,\mu}$ is a classical point-wise solution of 
\begin{align}
	&-\Delta_{m,\mu,\tau} \, e_{k,\mu} = (m^2+k^2) e_{k,\mu}, \quad  e_{k,\mu}(0)=0. 
\end{align}  
\end{lemma}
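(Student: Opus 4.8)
The plan is to reduce the claim to the P\"oschl-Teller eigenfunction equation \eqref{stationarys} already analyzed at the beginning of this section, together with the boundary vanishing recorded in Lemma \ref{checkboundary}. Unwinding the definition of $\Delta_{\xi_1,\xi_2,\tau}$ with $(\xi_1,\xi_2)=(m,\mu)$, one has $-\Delta_{m,\mu,\tau} = m^2 + (\mu^2-\frac14)\csch^2(\tau) - \partial_\tau^2$, so the asserted identity $-\Delta_{m,\mu,\tau}e_{k,\mu}=(m^2+k^2)e_{k,\mu}$ is equivalent, after cancelling the common term $m^2 e_{k,\mu}$, to
\begin{align*}
-\partial_\tau^2 e_{k,\mu}(\tau) + \Big(\mu^2-\tfrac14\Big)\csch^2(\tau)\, e_{k,\mu}(\tau) = k^2\, e_{k,\mu}(\tau),
\end{align*}
that is, to \eqref{stationarys} with independent variable $x=\tau$ and potential $V(\tau)=(\mu^2-\frac14)\sinh^{-2}(\tau)$. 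Hence it suffices to show that $e_{k,\mu}$ solves this one-dimensional equation on $(0,\infty)$ and that $e_{k,\mu}(0)=0$.

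First I would recall that the weighted associated Legendre function $\sqrt{\sinh\tau}\,\mathsf{P}^{-\mu}_{ik-\frac12}(\cosh\tau)$ solves \eqref{stationarys}: this is precisely the content of \eqref{conical_P} and of the substitution argument \eqref{eq_xi}--\eqref{hyper}, in which the prefactor $\sqrt{\sinh\tau}$ is exactly the weight that kills the first-order derivative term generated by the change of variables $\xi=\tanh\tau$. Next, by the connection formula \eqref{switch} with $\nu=ik-\frac12$ and $m=\mu\in\Z^+_0$, the function $\mathsf{P}^{\mu}_{ik-\frac12}$ is a nonzero constant multiple of $\mathsf{P}^{-\mu}_{ik-\frac12}$ (for $k\neq0$ the gamma factors appearing there are finite and nonzero), so by linearity and homogeneity of \eqref{stationarys} the function $\sqrt{\sinh\tau}\,\mathsf{P}^{\mu}_{ik-\frac12}(\cosh\tau)$ --- and therefore also its fixed scalar multiple $e_{k,\mu}$ --- solves the same equation for every $\tau>0$. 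Adding $m^2 e_{k,\mu}$ back to both sides and reading off the definition of $\Delta_{m,\mu,\tau}$ yields the stated eigenvalue identity.

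It remains to check the boundary value. Since $c_{k,\mu}$ is a fixed constant, $e_{k,\mu}(0)=c_{k,\mu}\lim_{\tau\to0^+}\sqrt{\sinh\tau}\,\mathsf{P}^{\mu}_{ik-\frac12}(\cosh\tau)=0$ by Lemma \ref{checkboundary}, whose hypotheses $\mu\in\Z$ and $k\in\R\setminus\{0\}$ are met; equivalently this follows at once from \eqref{conical_P} together with \eqref{switch}, which exhibit the function near $\tau=0$ as $(\sinh\tau)^{\mu+\frac12}$ times a bounded factor. Finally, $e_{k,\mu}$ is smooth on $(0,\infty)$ and extends continuously to $\tau=0$ with value $0$, so it is a classical point-wise solution in the sense of the statement.

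There is no serious obstacle in this argument; the only point deserving care is the verification that the $\sqrt{\sinh}$-weighting in \eqref{conical_P} genuinely transforms \eqref{stationarys} into a Schr\"odinger-type equation with no residual first-order term, which is why I would carry out the substitution \eqref{eq_xi}--\eqref{psi_k} explicitly rather than merely cite it.
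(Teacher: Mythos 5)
Your argument matches the paper's own proof: both reduce the eigenvalue identity to the fact that \eqref{conical_P} solves the P\"oschl-Teller equation \eqref{stationarys}, pass from $\mathsf{P}^{-\mu}_{ik-\frac12}$ to $\mathsf{P}^{\mu}_{ik-\frac12}$ via the connection formula \eqref{switch}, and invoke Lemma \ref{checkboundary} for the vanishing at $\tau=0$. You merely spell out the reduction to the one-dimensional Schr\"odinger equation more explicitly; the substance is the same.
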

\begin{proof}
This follows directly from the definition of $e_{k,\mu}$ and the fact that \eqref{conical_P} is a solution of \eqref{stationarys} combined with \eqref{switch}. The boundary condition holds true by Lemma \ref{checkboundary}. 
\end{proof}

\begin{definition} \label{def_A}
Let $\mathcal{A}$ consist of those $G\in L^1(0,\infty)$, which satisfy
\begin{align}
	       G(x)=\begin{cases}
		o(e^{-x}) \quad x\rightarrow \infty \\
		O(1) \quad\quad  x\rightarrow 0^+
	\end{cases}
\end{align}
and $G', G''\in L^1_{\loc}([0,\infty)$.
We denote by $\mathbf{1}_{\mathcal{A}}$ and $\mathbf{0}_{\mathcal{A}}$ the identity and zero operator on $\mathcal{A}$, respectively.
\end{definition}

\begin{lemma} \label{lem_identity_operator}
For all $\mu,m\in\Z^+_0$ it holds that
\begin{align}
	\int\limits_0^{\infty}e_{k,\mu}(\cdot) \langle \cdot ,\overline{e_{k,\mu}}\rangle_{L^2(\mathbb{R}^+)}\, \dx k
	= \mathbf{1}_{\mathcal{A}} \colon \mathcal{A}\to\mathcal{A}.
\end{align}
\end{lemma}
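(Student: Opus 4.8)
The statement asserts a completeness/Plancherel-type identity for the family $\{e_{k,\mu}\}_{k>0}$ on the space $\mathcal A$, namely that the spectral superposition of the rank-one projectors $e_{k,\mu}\langle\cdot,\overline{e_{k,\mu}}\rangle$ reproduces the identity. The natural route is to recognize $e_{k,\mu}$ as (up to the normalizing constant $c_{k,\mu}$) the kernel of the Mehler–Fock transform of order $\mu$, as the remark $e_{k,\mu}=K^\mu(k,\tau)$ and Appendix \ref{appB} already signal. So the plan is to reduce the claimed identity to the known Mehler–Fock inversion theorem for conical (Mehler) functions $\mathsf P^\mu_{ik-1/2}(\cosh\tau)$, verifying that the weight $\sqrt{\sinh\tau}$ and the precise form of $c_{k,\mu}$ in \eqref{def_c_kmu} are exactly the ones that turn the classical inversion formula into the stated orthogonality relation on $\mathcal A$.

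First I would record the classical generalized Mehler–Fock formula: for suitable $f$ on $(0,\infty)$ one has $f(\tau)=\int_0^\infty \big(\int_0^\infty f(\sigma)\,\mathsf P^\mu_{ik-1/2}(\cosh\sigma)\sinh\sigma\,\dx\sigma\big)\,\omega_\mu(k)\,\mathsf P^\mu_{ik-1/2}(\cosh\tau)\,\dx k$, with the spectral density $\omega_\mu(k)= \frac{k\sinh(\pi k)}{\pi}\,\Gamma(\tfrac12-\mu+ik)\Gamma(\tfrac12-\mu-ik)= c_{k,\mu}^2$. Substituting $f(\tau)=G(\tau)/\sqrt{\sinh\tau}$ and multiplying through by $\sqrt{\sinh\tau}$ converts this into precisely $G(\tau)=\int_0^\infty e_{k,\mu}(\tau)\,\langle G,\overline{e_{k,\mu}}\rangle_{L^2(\R^+)}\,\dx k$, which is the asserted identity. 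Then I would justify interchanging orders of integration and applying the inversion pointwise: this is where the class $\mathcal A$ enters. The decay $G(x)=o(e^{-x})$ as $x\to\infty$ together with $G(x)=O(1)$ as $x\to0^+$ guarantees that $\tau\mapsto G(\tau)/\sqrt{\sinh\tau}$ lies in the domain of validity of the Mehler–Fock inversion (it is $L^1(\sinh\sigma\,\dx\sigma)$ near $0$ because $1/\sqrt{\sinh\sigma}\cdot\sinh\sigma=\sqrt{\sinh\sigma}$ is integrable, and the exponential gain at infinity dominates the $e^{\tau/2}$ growth of $\sqrt{\sinh\tau}$ and of the conical function's growth, cf.\@ the bounds in Lemmas \ref{tbound}, \ref{midbound}, \ref{largemu}); the hypothesis $G',G''\in L^1_{\loc}$ supplies the local regularity (e.g.\@ $G$ of bounded variation on compacta) needed for pointwise convergence of the inversion integral.

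Concretely the steps are: (i) state the generalized Mehler–Fock transform $\mathcal M_\mu$ and its inverse with density $\omega_\mu$, citing the source used in Appendix \ref{appB}; (ii) identify $\omega_\mu(k)=c_{k,\mu}^2$ by direct comparison with \eqref{def_c_kmu}; (iii) for $G\in\mathcal A$, check that $f:=G/\sqrt{\sinh(\cdot)}$ satisfies the hypotheses of the inversion theorem, using the integrability of $\sqrt{\sinh\sigma}$ near $0$ and the $o(e^{-\sigma})$ decay at infinity to control the transform side, and $G',G''\in L^1_{\loc}$ for the pointwise statement; (iv) write $\langle G,\overline{e_{k,\mu}}\rangle_{L^2(\R^+)}=c_{k,\mu}\int_0^\infty f(\sigma)\,\mathsf P^\mu_{ik-1/2}(\cosh\sigma)\sinh\sigma\,\dx\sigma=c_{k,\mu}(\mathcal M_\mu f)(k)$; (v) multiply the inversion formula for $f$ by $\sqrt{\sinh\tau}$ to obtain the claimed identity, and note both sides map $\mathcal A$ into $\mathcal A$ (the right-hand side lands in $\mathcal A$ because it equals $G$).

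The main obstacle I anticipate is step (iii) combined with the Fubini interchange: the generalized Mehler–Fock inversion in the literature is usually stated under $L^2$ or Schwartz-type hypotheses rather than the bespoke class $\mathcal A$, so some care is needed to show that $\mathcal A$ is genuinely contained in a class where pointwise inversion holds, and that the double integral $\int_0^\infty\!\!\int_0^\infty$ converges absolutely so that the order of integration may be exchanged. This requires a uniform-in-$k$ bound on $\big(\int_0^\infty |f(\sigma)|\,|\mathsf P^\mu_{ik-1/2}(\cosh\sigma)|\sinh\sigma\,\dx\sigma\big)\,\omega_\mu(k)\,|\mathsf P^\mu_{ik-1/2}(\cosh\tau)|$ that is integrable in $k$ over $(0,\infty)$; here the large-$k$ asymptotics of the conical functions and of $\omega_\mu(k)=c_{k,\mu}^2$ — in particular the $\sqrt{\cosh k}$-type growth controlled in Lemma \ref{midbound} against the $\sinh(\pi k)$ factor in the density — are exactly what must be balanced, and I would invoke those lemmas together with the standard decay of $\mathsf P^\mu_{ik-1/2}(\cosh\tau)$ in $k$ to close the estimate.
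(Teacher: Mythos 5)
Your proposal is correct and takes essentially the same route as the paper: both reduce the claim to the Mehler--Fock inversion theorem for the weighted conical kernel. The paper's proof is a one-line application of Theorem~\ref{Fock} (with \eqref{pairop} and $e_{k,\mu}=K^\mu(k,\cdot)$), which is already stated for the class $\mathcal A$ by citing \cite{nasim} and \cite{thesis}, so the $\sqrt{\sinh}$-unweighting, the identification $\omega_\mu=c_{k,\mu}^2$, and the $\mathcal A$-admissibility check you flag as the main obstacle are all pre-packaged and need not be re-derived.
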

We remark here that the dot inside the round parenthesis is a placeholder for $\tau$, whereas the dot inside the angled parenthesis is a placeholder for functions $G\in\mathcal A$.
\begin{proof}
Let $G\in\mathcal{A}$. Then by Theorem \ref{Fock} and \eqref{pairop}, 
\begin{align*}
	\int\limits_0^{\infty}e_{k,\mu}(\tau)\langle G ,\overline{e_{k,\mu}}\rangle_{L^2(\mathbb{R}^+)}\, \dx k
	= \int\limits_0^{\infty} \bigg(\int\limits_0^{\infty} G(\tau') K^\mu(k,\tau') \,\dx \tau' \bigg)  K^\mu(k,\tau) \, \dx k 
	= G(\tau).
\end{align*}
\end{proof}

\begin{definition}
Let $m,\mu\in\Z^+_0$, $\phi_1\in(-\pi,\pi]$ and $t\in\R$. On the domain $\mathcal{A}$ we define the operator
\begin{align} \label{def_g}
	g(t,\phi_1,m,\mu,\cdot) \coloneqq
	\int_0^\infty\cos\!\big(N(\cdot,\phi_1) \sqrt{m^2+k^2} \, t \big) e_{k,\mu}(\cdot) \langle \cdot ,\overline{e_{k,\mu}}\rangle_{L^2(\mathbb{R}^+)} \,\dx k.
\end{align}
\end{definition}
Here again we have that the dots inside the round parentheses are placeholders for $\tau$, whereas the dot inside the angled parenthesis is a placeholder for $G\in\mathcal A$, which we interpret as function of $\tau'$: $G=G(\tau')$. Thus, $g(t,\phi_1,m,\mu,\cdot)\colon \mathcal A\to \mathcal A$ is an operator, whereas $g(t,\phi_1,m,\mu,\tau)\colon \mathcal A\to\R$ is a functional. We note that $g(0,\phi_1,m,\mu,\tau)$ is precisely the Dirac delta $\delta_\tau$.
\begin{lemma}
	Fix $\mu,m\in\Z^+_0$, $\phi_1\in(-\pi,\pi]\setminus\{0\}$. Then $g(\cdot,\phi_1,m,\mu,\cdot)$ satisfies
	\begin{align}
		(\partial_t^2-N^2(\cdot,\phi_1)\Delta_{m,\mu,\tau})g=\mathbf{0}_{\mathcal{A}}, 
		\quad g|_{\tau=0}=\mathbf{0}_{\mathcal{A}},
		\quad g|_{t=0}=\mathbf{1}_{\mathcal{A}}, 
		\quad \partial_t g|_{t=0}=\mathbf{0}_{\mathcal{A}}.
	\end{align}
\end{lemma}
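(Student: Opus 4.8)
The plan is to verify the four conditions in the lemma directly from the definition \eqref{def_g} of $g$, treating $g(t,\phi_1,m,\mu,\cdot)$ as an operator on $\mathcal A$ built by spectrally integrating against the family $e_{k,\mu}$. The key point is that differentiation in $t$ and in $\tau$ can be passed under the $\dx k$ integral (justified by the decay estimates of Section \ref{ptreview}, in particular Lemmas \ref{tbound}--\ref{largemu}, which give the integrand and its first two $\tau$-derivatives absolutely integrable in $k$ on the relevant range, uniformly on compact $\tau$-sets away from $\tau=0$), after which each of the four assertions reduces to an identity that holds pointwise in $k$ and then integrates up using Lemma \ref{lem_identity_operator}.

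First I would treat the initial conditions. At $t=0$, $\cos(N\sqrt{m^2+k^2}\,t)=1$, so $g|_{t=0}$ collapses to exactly the operator in Lemma \ref{lem_identity_operator}, which is $\mathbf 1_{\mathcal A}$. Differentiating \eqref{def_g} once in $t$ brings down a factor $-N\sqrt{m^2+k^2}\sin(N\sqrt{m^2+k^2}\,t)$, which vanishes at $t=0$ for every $k$; hence $\partial_t g|_{t=0}=\mathbf 0_{\mathcal A}$. Next, the boundary condition at $\tau=0$: since $e_{k,\mu}(0)=0$ for every $k$ by Lemma \ref{lem_classical_sol} (equivalently Lemma \ref{checkboundary}), the integrand of \eqref{def_g} vanishes identically at $\tau=0$, and $N(0,\phi_1)$ is finite for $\phi_1\neq0$, so $g|_{\tau=0}=\mathbf 0_{\mathcal A}$.

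The substantive computation is the wave equation $(\partial_t^2-N^2\Delta_{m,\mu,\tau})g=\mathbf 0_{\mathcal A}$. Applying $\partial_t^2$ to \eqref{def_g} multiplies the integrand by $-N^2(m^2+k^2)$. On the other hand, by Lemma \ref{lem_classical_sol} we have $-\Delta_{m,\mu,\tau}e_{k,\mu}=(m^2+k^2)e_{k,\mu}$; the operator $\Delta_{m,\mu,\tau}=-m^2-\csch^2(\tau)(\mu^2-\tfrac14)+\partial_\tau^2$ acts only through the $\tau$-dependence of the cosine factor and of $e_{k,\mu}(\tau)$, so applying $N^2\Delta_{m,\mu,\tau}$ to \eqref{def_g} and using that $\cos(N(\tau,\phi_1)\sqrt{m^2+k^2}\,t)\,e_{k,\mu}(\tau)$ satisfies $\Delta_{m,\mu,\tau}\big[\cos(\cdot)e_{k,\mu}\big]=-(m^2+k^2)\cos(\cdot)e_{k,\mu}$ — this is precisely the statement that $\tau\mapsto\cos(N\sqrt{m^2+k^2}t)e_{k,\mu}(\tau)$ solves $(\partial_t^2-N^2\Delta_{m,\mu,\tau})(\cos(N\sqrt{m^2+k^2}t)e_{k,\mu})=0$ fibrewise in $k$ — makes the two contributions cancel inside the integral. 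Thus $(\partial_t^2-N^2\Delta_{m,\mu,\tau})g$ has identically zero integrand, hence equals $\mathbf 0_{\mathcal A}$.

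The main obstacle, and the part deserving care, is the interchange of $\partial_t^2$, $\partial_\tau^2$ and $\csch^2(\tau)$ with the $\int_0^\infty \dx k$ integral, and the closely related point that $g(t,\phi_1,m,\mu,\cdot)$ genuinely maps $\mathcal A$ into $\mathcal A$ (so that the displayed equalities make sense as operator identities on $\mathcal A$, not merely formal ones). Here one uses: the large-$k$ and large-$\mu$ bounds of Lemmas \ref{tbound}, \ref{midbound}, \ref{largemu} together with the Mehler--Fock completeness (Theorem \ref{Fock}) to control $\langle G,\overline{e_{k,\mu}}\rangle_{L^2(\R^+)}$ for $G\in\mathcal A$; the fact that $|\cos|,|\sin|\le1$ so the oscillatory factors cost nothing in absolute value while each $t$- or $\tau$-derivative of the cosine factor produces at worst polynomial growth in $k$ (from $\sqrt{m^2+k^2}$) and bounded-in-$\tau$ factors on compacts away from $\tau=0$; and the regularity $G',G''\in L^1_{\loc}$ plus the decay hypotheses in Definition \ref{def_A}, which guarantee the output inherits the $o(e^{-x})$ decay and the $O(1)$ behavior at $0^+$. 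Once dominated convergence licenses all the interchanges, the four claims follow as above; I would present the wave-equation cancellation last, after the three one-line verifications of the initial and boundary conditions.
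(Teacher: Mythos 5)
Your proposal follows essentially the same route as the paper's: the initial condition $g|_{t=0}=\mathbf 1_{\mathcal A}$ via Lemma \ref{lem_identity_operator}, the vanishing of $\partial_t g|_{t=0}$ and the boundary value $g|_{\tau=0}=\mathbf 0_{\mathcal A}$ from the integrand, and the wave equation via Lemma \ref{lem_classical_sol}, with dominated convergence licensing exchange of derivatives and the $k$-integral. The paper's proof is a four-line appeal to exactly those two lemmas plus dominated convergence, so the overall strategy matches and your added detail on \emph{why} dominated convergence applies (the Section \ref{ptreview} decay estimates) is a useful supplement.

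One caution, however, on the explicit intermediate identity you interpolate for the wave equation. You write that
\[
\Delta_{m,\mu,\tau}\bigl[\cos\!\big(N(\tau,\phi_1)\sqrt{m^2+k^2}\,t\big)\,e_{k,\mu}(\tau)\bigr]=-(m^2+k^2)\cos(\cdot)\,e_{k,\mu},
\]
but this is only a consequence of Lemma \ref{lem_classical_sol} if $\partial_\tau^2$ inside $\Delta_{m,\mu,\tau}$ is understood to act on $e_{k,\mu}(\tau)$ alone. Since $N(\tau,\phi_1)$ inside the cosine is itself $\tau$-dependent, a literal application of $\partial_\tau^2$ to the product $\cos(N\sigma t)\,e_{k,\mu}$ generates cross-terms of the form $-(N_\tau\sigma t)^2\cos(N\sigma t)e_{k,\mu}$, $-N_{\tau\tau}\sigma t\sin(N\sigma t)e_{k,\mu}$ and $-2N_\tau\sigma t\sin(N\sigma t)e_{k,\mu}'$, none of which cancel against $\partial_t^2$ for $t\neq0$. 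So as written, the fibrewise identity is strictly false; what the paper's proof is implicitly relying on (its invocation of Lemma \ref{lem_classical_sol} is only sufficient under this reading) is that $\Delta_{m,\mu,\tau}$ be applied to the spectral factor $e_{k,\mu}(\tau)$ with $N$ frozen as a multiplier. You should either adopt that convention explicitly or point out that the extra terms vanish only at $t=0$ — this is a gap in your written argument, though one you share with the paper's own terse proof.
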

\begin{proof}
	It is a consequence of Lemma \ref{lem_classical_sol} and Lebesgue's dominated convergence theorem that
	$(\partial_t^2-N^2(\cdot,\phi_1)\Delta_{m,\mu,\tau})g=\mathbf{0}_{\mathcal{A}}$. Moreover, $g|_{\tau=0}=\mathbf{0}_{\mathcal{A}}$ by Lemma \ref{lem_classical_sol}, and the dominated convergence theorem implies $\partial_t g|_{t=0}=\mathbf{0}_{\mathcal{A}}$.  
	From Lemma \ref{lem_identity_operator} it follows that $g|_{t=0}=\mathbf{1}_{\mathcal{A}}$. 
\end{proof}

In the following we define the integral kernel and the corresponding solution operator for the P\"oschl-Teller problem.
\begin{definition}\label{greens}
For $\phi_1,\phi_1'\in(-\pi,\pi]$, $\phi_2,\phi_2'\in[0,2\pi)$ and $t\in \mathbb{R}$, $\tau\in\R^+_0$ we define the integral kernel
\begin{align}
	G_{P}(t,\phi_1,\phi_2,\tau,\phi_1',\phi_2') \coloneqq
	4\sum^\infty_{m=0}\sum^\infty_{\mu=0} \cos((\phi_1-\phi_1')m)\cos((\phi_2-\phi_2')\mu) \, g(t,\phi_1,m,\mu,\tau).
\end{align}
It operates on functions $H\in\mathcal A$, $\tau'\mapsto H(\tau')$.
For each $t\in\R$, the associated integral operator $\mathcal G_{P}(t)$ acts on $q\in C_c^{\infty}(M)$ as follows: 
\begin{align}\label{greens_function}
	\mathcal G_P(t)(q)\big|_{(\phi_1,\phi_2,\tau)}\coloneqq \int\limits_0^{2\pi}\int\limits_{-\pi}^{\pi}G_{P}(t,\phi_1,\phi_2,\tau,\phi_1',\phi_2')q(\phi_1',\phi_2',\tau')\, \dx\phi_1'\,\dx\phi_2' . 
\end{align}
\end{definition}

We recall that in the notation of Section \ref{Intro}, $\mathcal G_P(t)=\e^{it\sqrt{\Delta_P}}$. 
Next, we state and prove the main result of this section. 
We recall that spectral representations of generalized eigenfunctions only converge on compact subsets of a non-compact manifold, which explains the restriction to compact subsets, which do not touch $\T^2$. 
\begin{theorem} \label{thm_existence}
Let $q\in C^{\infty}_c(M)$ such that $q\equiv0$ on $\{\tau'<\epsilon_0\}$ for some $\epsilon_0>0$. Then $\mathcal G_P(t)(q)$, when considered on compact subsets of $M$ at a positive distance from the boundary, is the unique classical solution of 
\begin{align}
	\begin{aligned}
		(\partial^2_t - \Delta_P) u = 0 \qquad & \text{in} \quad\, (0,\infty)\times M, \\
		u  = 0 \qquad & \text{on} \quad (0,\infty)\times \partial M, \\
		u(0,\cdot) =q, \;  \partial_t u(0,\cdot) =0 \qquad & \text{in} \quad  M,
	\end{aligned}
\end{align}
where $\Delta_P$ is given by \eqref{Delta_P}. 
\end{theorem}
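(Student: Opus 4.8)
The plan is to verify the three defining properties of the initial boundary value problem---the PDE, the boundary condition, and the Cauchy data---for the candidate $u(t,\cdot):=\mathcal G_P(t)(q)$, and then argue uniqueness. The starting observation is that, after applying the partial Fourier transform $\mathcal F_\phi$ in the angular variables $(\phi_1,\phi_2)$, the operator $\Delta_P$ acts as multiplication by the prefactor $N^2(\tau,\phi_1)$ followed by $\Delta_{\xi_1,\xi_2,\tau}$, so that on each angular Fourier mode $(m,\mu)$ the problem decouples into the one-dimensional (in $\tau$) equation already treated in the preceding lemmas. Concretely, I would first expand $q\in C^\infty_c(M)$ with $q\equiv 0$ on $\{\tau'<\epsilon_0\}$ in its (double) Fourier series in $(\phi_1',\phi_2')$; since $q$ is smooth and compactly supported, the Fourier coefficients $q_{m,\mu}(\tau')$ are rapidly decaying in $(m,\mu)$ and each lies in $\mathcal A$ (the decay as $\tau'\to\infty$ is in fact finitely supported, and the $O(1)$ behavior at $\tau'\to 0^+$ is automatic by smoothness; the vanishing near $\tau'=0$ coming from $\epsilon_0$ is what we will really use for convergence of the generalized-eigenfunction expansion). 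Then $\mathcal G_P(t)(q)$ is, mode by mode, $\sum_{m,\mu}\cos(m\phi_1)\cos(\mu\phi_2)\,[g(t,\phi_1,m,\mu,\cdot)\,q_{m,\mu}]$ up to the cosine/sine bookkeeping absorbed in the factor $4$ and the cosine kernels.

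Next I would apply, term by term, the lemma stating $(\partial_t^2-N^2\Delta_{m,\mu,\tau})g=\mathbf 0_{\mathcal A}$, together with $g|_{\tau=0}=\mathbf 0_{\mathcal A}$, $g|_{t=0}=\mathbf 1_{\mathcal A}$, $\partial_t g|_{t=0}=\mathbf 0_{\mathcal A}$. Since $\Delta_P$ acts on the $(m,\mu)$-mode as $N^2\Delta_{m,\mu,\tau}$ (with $-m^2$, $-(\mu^2-\tfrac14)$ replacing the $\xi$-symbols and the explicit $\partial_\tau^2$), each term of the series solves $(\partial_t^2-\Delta_P)(\text{term})=0$ in the classical sense on the set where $N>0$, i.e. on $(0,\infty)\times M$ away from the degenerate locus; the boundary condition $u=0$ on $\{\tau=\tau_1\}$ follows because $e_{k,\mu}(0)=0$ by Lemma \ref{checkboundary}---wait, more precisely the Dirichlet condition is at $\tau=\tau_1$, so here I would instead use that $g$ is built from the eigenfunctions $e_{k,\mu}$ which, after the reflection/normalization, vanish at the boundary $\tau=\tau_1$; this is exactly the role of choosing the solution branch that vanishes at the relevant endpoint as discussed after \eqref{hyper}. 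The Cauchy data $u(0,\cdot)=q$ follows from $g|_{t=0}=\mathbf 1_{\mathcal A}$ (Lemma \ref{lem_identity_operator}, i.e. the Mehler--Fock completeness relation), and $\partial_t u(0,\cdot)=0$ from $\partial_t g|_{t=0}=\mathbf 0_{\mathcal A}$.

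The part needing genuine care---and the main obstacle---is the interchange of the infinite angular sums (and the $k$-integral inside $g$) with the differential operators $\partial_t^2$ and $\Delta_P$, and the resulting classical regularity of the sum on compact subsets of $M$ at positive distance from $\partial M$. Here I would proceed as follows: the restriction to compact $K\Subset M$ with $\mathrm{dist}(K,\T^2)>0$ and $K\cap\{\tau=0\}=\emptyset$ gives uniform two-sided bounds $0<\eta\le N\le N_{\max}$ and $\sinh\tau\ge c>0$ on $K$; combined with the bounds on $|\sqrt{\sinh\tau}\,\mathsf P^\mu_{ik-1/2}(\cosh\tau)|$ from Lemmas \ref{tbound}, \ref{midbound}, \ref{largemu} and the rapid decay of $q_{m,\mu}$, one gets that the series and all its term-wise $(t,\phi_1,\phi_2,\tau)$-derivatives up to order two converge absolutely and uniformly on $K$. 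This justifies differentiating under the sum and integral, upgrades the formal solution to a classical one, and shows the Cauchy data are attained in the classical sense (the $\epsilon_0$-hypothesis ensures $\langle q_{m,\mu},\overline{e_{k,\mu}}\rangle$ decays fast enough in $k$ for the $g$-integral and its $\tau$-derivatives to converge). For uniqueness, I would use a standard energy argument: the difference $w$ of two classical solutions solves the homogeneous problem with zero Cauchy data and zero boundary data; multiplying $(\partial_t^2-\Delta_P)w=0$ by $\partial_t w$, integrating over $M$ with the natural volume form $a^3(\cosh\tau-\cos\phi_1)^{-3}\,\dx\phi_1\dx\phi_2\dx\tau$ (so that $\Delta_P$ is formally self-adjoint up to the lower-order $-\tfrac14\csch^2\tau$ shift, which is a harmless bounded-below potential on $K$), and using the Dirichlet condition to kill the boundary term, yields conservation of the associated energy; since it vanishes at $t=0$ it vanishes for all $t$, forcing $w\equiv 0$ on the compact sets under consideration. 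The only subtlety is that $\Delta_P$ is not exactly $\Delta$, but since $\Delta_P-\Delta$ is an order-zero (bounded) perturbation, the energy estimate goes through with Grönwall.
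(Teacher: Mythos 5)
Your overall framework---decompose $q$ into angular Fourier modes, apply the mode-by-mode lemma $(\partial_t^2-N^2\Delta_{m,\mu,\tau})g=\mathbf 0_{\mathcal A}$ with $g|_{t=0}=\mathbf 1_{\mathcal A}$, $\partial_t g|_{t=0}=\mathbf 0_{\mathcal A}$, and then justify the interchange of differentiation with the series and $k$-integral on compact subsets at positive distance from $\T^2$---is the right skeleton and matches the paper's strategy. The paper's actual convergence argument is, however, noticeably more quantitative than what you sketch: it first trades $f_{m,\mu}$ for $\tilde f_{m,\mu}=(\mu^2+m^2)f_{m,\mu}\in\mathcal A$ via integration by parts, then bounds the abbreviated inner integral $r_{m,\mu,j}(k)$ for $j=0,1,2$ (covering $\partial_t^j u$ and $\Delta_P u$ simultaneously), and then splits the $(k,\mu)$-domain into three regimes $k\ge k_0,\ \mu\le\mu_0$; $k\le k_0,\ \mu\le\mu_0$; $k\le k_0,\ \mu\ge\mu_0$, each handled with a different special-function estimate (large-$k$ Legendre asymptotics \eqref{largekleg}, Lemma \ref{midbound}, and Lemma \ref{largemu} respectively). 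Your ``combined bounds give absolute and uniform convergence'' is the correct conclusion but skips this regime split, and in the regime $k\le k_0,\ \mu\ge\mu_0$ the needed decay is \emph{not} just rapid decay of $q_{m,\mu}$ plus Lemma \ref{tbound}/\ref{midbound}---it is precisely the $\e^{-\mu\beta_x}$ decay in Lemma \ref{largemu} together with the $\epsilon_0$-hypothesis on the support of $q$.

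The boundary-condition paragraph contains a genuine error. You correctly detect the mismatch: Lemma \ref{checkboundary} gives $e_{k,\mu}(0)=0$, i.e.\ vanishing at the coordinate degeneracy $\tau=0$ (the $z$-axis inside $M$), whereas $\partial M=\T^2$ is the isosurface $\{\tau=\tau_1\}$. But your resolution---that ``after the reflection/normalization'' the eigenfunctions vanish at $\tau=\tau_1$---is not what the paper does. The construction in Definitions \ref{greens}/\ref{greensintegral} uses only the single branch $\sqrt{\sinh\tau}\,\mathsf P^{\mu}_{ik-\frac12}(\cosh\tau)$, which vanishes at $\tau=0$, not at $\tau=\tau_1$; the reflected combination you appeal to is written down in Remark \ref{bcrmk} as an alternative the paper explicitly declines to use, since it would destroy the Mehler--Fock inversion structure. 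Consequently the parametrix does not satisfy the Dirichlet condition on $\T^2$; the qualifier ``when considered on compact subsets of $M$ at a positive distance from the boundary'' is doing exactly the work of keeping this failure out of sight, and the paper's proof never checks (and could not check) vanishing on $\T^2$. This also undercuts your proposed uniqueness argument: the energy identity you write requires ``using the Dirichlet condition to kill the boundary term'' on $\partial M=\T^2$, which is precisely what the constructed solution does not provide. The paper's proof in fact does not address uniqueness at all---it is entirely a convergence proof---so you should not present an energy argument that depends on a boundary condition the parametrix fails to satisfy.
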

\begin{proof} 
Let $q\in C_c^\infty(M)$. By assumption, the support of $q$ in the $\tau'$ variable is contained in the set $(\epsilon_0,\tau'')$ for some $\tau''<\tau_1<\infty$. While it is possible to consider $q$ where the $\tau'$ dependence is in the class $\mathcal{A}$, the proofs are significantly more technical so we take this dense subclass of $\mathcal{A}$ instead.  It is easy to check that $\mathcal G_P(t)(q)$ is a formal solution. We set 
\begin{align}
f_{m,\mu}(\tau') \coloneqq
\int\limits_0^{2\pi}\int\limits_{-\pi}^{\pi}q(\phi_1',\phi_2',\tau')\,\e^{-im\phi_1'} \,\e^{-i\mu\phi_2'}\,\dx\phi_1'\,\dx \phi_2'.
\end{align} 
Performing an integration by parts yields that
\begin{align} \label{FTdecay}
\tilde f_{m,\mu}(\tau') \coloneqq f_{m, \mu}(\tau')(\mu^2+m^2) \in \mathcal{A}\quad \forall m, \mu\in \mathbb{N}^+.
\end{align}
We set $\sigma_{k,m} \coloneqq \sqrt{k^2+m^2}$, consider 
\begin{align}
\begin{aligned} \label{wantedcontrol}
	\sum^\infty_{m=0}\sum^\infty_{\mu=0} \Bigg[ \e^{im\phi_1} \e^{i\mu\phi_2}
	\int^\infty_{0} \bigg( ((N(\tau,\phi_1)\sigma_{k,m}^2)^j
	\int^\infty_0 c^2_{k,\mu}\mathsf{P}^{\mu}_{ik-\frac{1}{2}}(\cosh\tau')\sqrt{\sinh\tau'}f_{m,\mu}(\tau')\,\dx\tau' \\
	\times
	\e^{iN(\tau,\phi_1)t\sigma_{k,m}}\mathsf{P}^{\mu}_{ik-\frac{1}{2}}(\cosh\tau)\sqrt{\sinh\tau} \bigg)\,\dx k  \Bigg]
\end{aligned}
\end{align}
for $j=0,1,2$, and observe that a bound for this expression directly yields bounds for $\partial^j_t\mathcal G_P(t)(q)$, $\Delta_P\mathcal G_P(t)(q)$ and $\mathcal G_P(t)(q)$ itself.

We establish the required integral estimates in three steps. Let us therefore fix $k_0,\mu_0\in\R^+$ and abbreviate the inner integral in \eqref{wantedcontrol} as follows:
\begin{align*}
	r_{m, \mu,j}(k)\coloneqq \int\limits_0^{\infty}\sigma_{k,m}^{2j}f_{m,\mu}(\tau')c^2_{k, \mu}\mathsf{P}^{\mu}_{ik-\frac{1}{2}}(\cosh\tau')\sqrt{\sinh\tau'}\,\dx\tau', \quad j=0,1,2.
\end{align*} 

\begin{enumerate}
\item $k\geq k_0$, $\mu\leq \mu_0$:
We have to show that $\frac{r_{m, \mu,j}(k)}{c_{k, \mu}}$ decays sufficiently fast as $k\to\infty$. This step is similar to the proof of the Mehler-Fock inversion formula following from Theorem \ref{Fock}, which is equivalent to checking that $r_{m, \mu, 0}(0)=0$, and $r_{m, \mu,0} \rightarrow 0$ sufficiently fast as $k\to\infty$. The proof here is inspired by the one in  \cite{thesis}, but there are some significant differences to just proving the Mehler-Fock inversion theorem because of the evolution in time and the additional angular variables. 
We start with an integration by parts argument which uses the equation \eqref{gammahalf} and the equality for large $k$ \eqref{largekleg}. The equation \eqref{gammahalf} along with definition \eqref{c_k} gives 
\begin{align}\label{outc}
&|c_{k,\mu}|= \left|\frac{k \sinh(\pi k)}{\pi}\right|^{\frac{1}{2}} \left|\Gamma(\frac{1}{2}-\mu+ik)\right|=\\&
\left|k\tanh k\prod_{\ell=1}^\mu\left((\ell-\frac{1}{2})^2+k^2\right)^{-1}\right|^{\frac{1}{2}}.
\end{align}
Naturally we have that in this region 
\begin{align}
|c_{k,\mu}|=O(k^{-\mu+\frac{1}{2}}), \quad \text{as}\; k\to \infty
\end{align}
uniformly in $\mu$. 
Using again \eqref{largekleg} and \eqref{FTdecay} repeatedly to replace $f$ with $\tilde f$, we infer the existence of a constant $C_{\tau''}$, which depends only on $\tau''$ such that 
\begin{align*}
\left|\frac{r_{{m, \mu},j}(k)}{c_{k, \mu}}\right| &=
\left|\int\limits_{\mathbb{R}} c_{k,\mu}\sigma_{k,m}^{2j}\mathsf{P}^{\mu}_{ik-\frac{1}{2}}(\cosh\tau')\sqrt{\sinh(\tau')}f_{m\mu}(\tau')\,\dx\tau'\right| \\ &\leq 
C_{\tau''}\left|\int\limits_{\mathbb{R}}\sigma_{k,m}^{2j} \e^{ik\tau}f_{m\mu}(\tau')\,\dx\tau'\right| \leq 
 \frac{C(\mu_0,\tau'',Q)}{k^{Q-j}m^{Q}},\quad j=0,1,2
\end{align*}
for any $Q\in\mathbb{N}$. 
Therefore, it remains to show that 
\begin{align*}
\bigg|\int^\infty_{k_0}r_{{m, \mu},j}(k)(N(\tau,\phi_1))^j \e^{iN(\tau,\phi_1)t\sigma_{k,m}}\mathsf{P}^{\mu}_{ik-\frac{1}{2}}(\cosh\tau)\sqrt{\sinh(\tau)}\,\dx k\bigg|
\leq\frac{C(\mu_0,k_0,\tau'',\tau)}{m^2}, 
\end{align*}
for finite $\tau$, where $C(\mu_0,k_0,\tau'',\tau)$ depends on $\mu_0,k_0, \tau''$, $\tau$ only. 
We infer this as a consequence of the large $k$ asymptotics in \eqref{largekleg}, which implies that
\begin{align}
|\mathsf{P}^{\mu}_{ik-\frac{1}{2}}(\cosh\tau)|\leq \frac{C}{\sqrt{\sinh(\tau)}}k^{\mu-\frac{1}{2}}, \quad \tau\in [0,\infty),
\end{align}
where $C$ is independent of the other variables. Since the factors $N(\tau,\phi_1)^j$, $j=1,2$, are only uniformly bounded on compact subsets of $M$, we imposed the restriction on $\tau$. 
Using Fubini's theorem to sum over $m,\mu$ concludes the proof for this case. 
 
\item $k\leq k_0, \mu\leq \mu_0$: \\
The second step is to see that the integral of
\begin{equation*}
	c_{k,\mu}^2\mathsf{P}^{\mu}_{ik-\frac{1}{2}}(\cosh\tau')\sqrt{\sinh(\tau')}f_{m,\mu}(\tau')
\end{equation*}
over $\tau'$ and subsequently the integral of 
\begin{equation*}
	\tilde{r}_{m, \mu, j}(t,\phi_1,k)\coloneqq \e^{iN(\tau,\phi_1)\sigma_{k,m}} r_{m, \mu, j}
\end{equation*}
against $\mathsf{P}^{\mu}_{ik-\frac{1}{2}}(\cosh\tau)$ over $k\leq k_0$ is bounded. But this follows easily by applying Lemma \ref{midbound} with $\mu\leq \mu_0$. We conclude that for $\mu\leq \mu_0$ there exits some $C(\mu_0,k_0,\tau'',\tau)$ such that
\begin{align*}
\int\limits_{k\leq k_0}|r_{{m, \mu},j}(k)(N(\tau,\phi_1))^j \e^{iN(\tau,\phi_1)t\sigma_{k,m}}\mathsf{P}^{\mu}_{ik-\frac{1}{2}}(\cosh\tau)\sqrt{\sinh(\tau)}|\,\dx k\leq C(\mu_0,k_0,\tau'',\tau).  
\end{align*} 
 
\item $k\leq k_0, \mu\geq \mu_0$:\\
The third step of the proof shows for $\mu\geq \mu_0$, and $k\leq k_0$ the integrals 
\begin{align}\label{last}
&\int\limits_{k\leq k_0}|r_{{m, \mu},j}(k)(N(\tau,\phi_1))^j \e^{iN(\tau,\phi_1)t\sigma_{k,m}}\mathsf{P}^{\mu}_{ik-\frac{1}{2}}(\cosh\tau)\sqrt{\sinh(\tau)}|\,\dx k  
\end{align} 
are also bounded with enough decay in $m, \mu$ to be summable.  This is immediately apparent from Lemma \ref{largemu} and equation \eqref{outc}. Indeed we have that \eqref{last} is bounded by 
\begin{align}
C\int\limits_{k\leq k_0}|(N(\tau,\phi_1))^j \e^{-\beta_{\tau''}\mu} \e^{-\beta_{\tau_1-\epsilon_0}\mu}f_{m,\mu}(\tau')|\,\dx k
\end{align}
for some constant being independent of the other variables. The use of \eqref{FTdecay} allows us to then sum over the $m,\mu$. Using Fubini's theorem we obtain the boundedness of the last part. 
\end{enumerate}
Combining the three cases finishes the proof. 
\end{proof}

\begin{remark}\label{bcrmk}
We remark here that the parametrix does not respect the boundary conditions on the torus. To take them into account, one could e.g.\@ base the kernel construction on the following solution of the P\"oschl-Teller equation:
\begin{align*}
\sqrt{\sinh(\tau)}\mathsf{P}^{\mu}_{ik-\frac{1}{2}}(\cosh\tau)- \frac{\sqrt{\sinh(\tau_1)}\mathsf{P}^{\mu}_{ik-\frac{1}{2}}(\cosh\tau_1)}{\sqrt{\sinh(\tau_1)}\mathsf{P}^{-\mu}_{ik-\frac{1}{2}}(\cosh\tau_1)} \sqrt{\sinh(\tau)}\mathsf{P}^{-\mu}_{ik-\frac{1}{2}}(\cosh\tau);
\end{align*}
recall that we used only the first term of this expression in our construction.
However, this would then eliminate the possibility of using the Mehler-Fock kernel in the Green's function to represent generic Cauchy data. 
On the other hand, we need to keep some positive distance from the boundary $\T^2$ when applying our result to the original wave operator, where the Laplacian is given by $\Delta_{\mathbb{T}^2}$ instead of $\Delta_P$. Since both these operators have the same principal symbol, this ``safety distance" ensures that the error $\Delta_{\mathbb{T}^2}-\Delta_P$ is of lower order.  
For these reasons, it is not possible to extend the result up to the boundary $\T^2$. 
\end{remark}

\section{Dispersive estimates}\label{disperse}
\noindent
Let us recall some definitions.
We have that $\sigma_{m,k}\coloneqq\sqrt{m^2+k^2}$ for $m,k\geq0$. 
Furthermore, for $b\geq 5$ we consider $\varphi\in C^\infty_c(\mathbb{R}^+)$, whose support lies in a neighborhood of $[\frac{b}{2},\frac{3b}{2}]$ and $\varphi\equiv1$ on $[\frac{b}{2},\frac{3b}{2}]$. 
Similarly, we consider $\psi \in C^\infty_c(\mathbb{R}^+)$, which is constantly $1$ on the interval $\big[\big(\frac{b}{2}\big)^2-4,\big(\frac{3b}{2}\big)^2-4\big]$.
We fix $0<\epsilon_1\ll\tau_1$. Without loss of generality we assume that $\tau_1>1$.

Next we provide an alternative definition of $\mathcal G_{P}(t)$, which is more practical for the estimates in this section.
\begin{definition}\label{greensintegral}
Let $m,\mu\in\Z^+_0$, $\phi_1,\phi_1'\in(-\pi,\pi]$, $\phi_2,\phi_2'\in[0,2\pi)$ and $t\in \mathbb{R}$, $\tau, \tau'\in\R^+_0$. We set 
\begin{align*} 
	g_I(t,\phi_1,m,\mu,\tau,\tau') \coloneqq
	\int_0^\infty\cos\!\big(N(\tau,\phi_1) \sqrt{m^2+k^2} \, t \big) e_{k,\mu}(\tau) e_{k,\mu}(\tau')\,\dx k,
\end{align*}
and define the integral kernel
\begin{align} \label{kernel_I}
	G_{I}(t,\phi_1,\phi_2,\tau,\phi_1',\phi_2',\tau') \coloneqq
	4\sum^\infty_{m=0}\sum^\infty_{\mu=0} \cos((\phi_1-\phi_1')m)\cos((\phi_2-\phi_2')\mu) \, g_I(t,\phi_1,m,\mu,\tau,\tau').
\end{align}
\end{definition}

\begin{lemma}
For each $t\in\R_0^+$, the action of the integral operator $\mathcal G_{P}(t)$ from Definition \ref{greens} on $q\in C_c^{\infty}(M)$ can equivalently be expressed as follows: 
\begin{align}\label{greens_function1}
	\mathcal G_P(t)(q)\big|_{(\phi_1,\phi_2,\tau)} = \int\limits_0^{\infty}\int\limits_0^{2\pi}\int\limits_{-\pi}^{\pi}G_{I}(t,\phi_1,\phi_2,\tau,\phi_1',\phi_2',\tau')q(\phi_1',\phi_2',\tau')\, \dx\phi_1'\,\dx\phi_2' \,\dx \tau'. 
\end{align}
\end{lemma}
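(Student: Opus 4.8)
The plan is to show that the two representations of $\mathcal{G}_P(t)(q)$—the ``operator'' form in Definition \ref{greens} and the ``integral'' form in Definition \ref{greensintegral}—agree on $q\in C_c^\infty(M)$, by simply unfolding the definitions and identifying the angled-bracket pairing $\langle\,\cdot\,,\overline{e_{k,\mu}}\rangle_{L^2(\R^+)}$ with integration against $e_{k,\mu}(\tau')$ in the $\tau'$ variable. First I would start from \eqref{greens_function} and substitute the definition of $G_P$, so that
\begin{align*}
\mathcal G_P(t)(q)\big|_{(\phi_1,\phi_2,\tau)}
= 4\sum_{m=0}^\infty\sum_{\mu=0}^\infty \int_0^{2\pi}\!\!\int_{-\pi}^{\pi}
\cos((\phi_1-\phi_1')m)\cos((\phi_2-\phi_2')\mu)\,
g(t,\phi_1,m,\mu,\tau)\big[q(\phi_1',\phi_2',\cdot)\big]\,\dx\phi_1'\,\dx\phi_2',
\end{align*}
where $g(t,\phi_1,m,\mu,\tau)$ acts as a functional on the $\tau'$-slice $q(\phi_1',\phi_2',\cdot)\in\mathcal A$. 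Then, using the definition \eqref{def_g} of $g$ and the meaning of the bracket placeholder explained after Definition \ref{greens}, I would write
\begin{align*}
g(t,\phi_1,m,\mu,\tau)\big[q(\phi_1',\phi_2',\cdot)\big]
= \int_0^\infty \cos\!\big(N(\tau,\phi_1)\sqrt{m^2+k^2}\,t\big)\,e_{k,\mu}(\tau)
\int_0^\infty e_{k,\mu}(\tau')\,q(\phi_1',\phi_2',\tau')\,\dx\tau'\,\dx k,
\end{align*}
noting that $\overline{e_{k,\mu}(\tau')}=e_{k,\mu}(\tau')$ since $c_{k,\mu}$ is real for $k\in\R^+$, $\mu\in\Z^+_0$ (because $\Gamma(\tfrac12-\mu-ik)=\overline{\Gamma(\tfrac12-\mu+ik)}$, so the product under the square root in \eqref{def_c_kmu} is a positive real). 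Recognizing the inner $k$-integral as exactly $g_I(t,\phi_1,m,\mu,\tau,\tau')$ from Definition \ref{greensintegral} gives, after interchanging the $\tau'$-integral to the outside,
\begin{align*}
\mathcal G_P(t)(q)\big|_{(\phi_1,\phi_2,\tau)}
= \int_0^\infty\!\!\int_0^{2\pi}\!\!\int_{-\pi}^{\pi}
\Bigg(4\sum_{m=0}^\infty\sum_{\mu=0}^\infty \cos((\phi_1-\phi_1')m)\cos((\phi_2-\phi_2')\mu)\,g_I(t,\phi_1,m,\mu,\tau,\tau')\Bigg)
q(\phi_1',\phi_2',\tau')\,\dx\phi_1'\,\dx\phi_2'\,\dx\tau',
\end{align*}
and the parenthesized expression is precisely $G_I$ from \eqref{kernel_I}, which is the claim.

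The one genuinely non-formal point is the justification of the interchanges of integrals and of the $\sum_{m,\mu}$ with the $k$- and $\tau'$-integrations; everything else is bookkeeping. For this I would reuse the estimates already established in the proof of Theorem \ref{thm_existence}: since $q\in C_c^\infty(M)$ with $q\equiv0$ on $\{\tau'<\epsilon_0\}$, the Fourier coefficients $f_{m,\mu}(\tau')$ satisfy the decay \eqref{FTdecay}, and the three-region analysis there (with $j=0$) shows that the relevant double series and iterated integrals converge absolutely and uniformly on the compact subsets of $M$ at positive distance from $\T^2$ under consideration. Hence Fubini–Tonelli applies and all the rearrangements above are legitimate.

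I expect the main (and only real) obstacle to be a careful statement of \emph{where} the identity \eqref{greens_function1} holds—namely on the same compact subsets away from $\T^2$ on which $\mathcal G_P(t)(q)$ was defined in Theorem \ref{thm_existence}—and making sure the absolute-convergence input from that theorem's proof is invoked correctly so that the Dirac-delta reproducing property implicit in $g|_{t=0}=\mathbf 1_{\mathcal A}$ (Lemma \ref{lem_identity_operator}) is not needed here beyond what is already proved. No new analysis is required; the lemma is essentially a change of viewpoint, packaging the operator-valued kernel $G_P$ acting on $\mathcal A$-valued slices as an honest scalar integral kernel $G_I$ on $M\times M$.
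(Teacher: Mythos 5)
Your proposal is correct and matches what the paper implicitly does: this lemma carries no explicit proof in the paper precisely because, as you observe, it is just the unfolding of Definitions \ref{greens} and \ref{greensintegral} together with Fubini, and the absolute-convergence estimates needed to license the interchange of $\sum_{m,\mu}$, $\int\dx k$, and $\int\dx\tau'$ are those already established in the proof of Theorem \ref{thm_existence}. One small remark: you do not actually need $e_{k,\mu}$ to be real-valued (your argument only covers $c_{k,\mu}$, not the conical Legendre factor), since by the paper's convention --- visible in the proof of Lemma \ref{lem_identity_operator} --- one has $\langle G,\overline{e_{k,\mu}}\rangle_{L^2(\R^+)}=\int_0^\infty G(\tau')\,e_{k,\mu}(\tau')\,\dx\tau'$ regardless.
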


We briefly recall the definition of $\varphi(hD_t)$ through functional calculus. The associated unitary operator is taken to be the Fourier transform in time, where we denote the dual variable of $t$ by $\alpha$. For $f\in L^1(\mathbb{R})$ we have that
\begin{align}
Uf(\alpha)\coloneqq\frac{1}{\sqrt{2\pi}}\int\limits_{-\infty}^{\infty}f(t) \e^{-i\alpha t}\,\dx t;
\end{align}
the unique continuous extension to $L^2(\R)$ yields the mentioned unitary operator (again denoted by $U$). The spectral calculus for self adjoint operators tells us that  $\varphi(hD_t)=U^*\mathcal{M}_{\varphi(\alpha)}U$, where $\mathcal{M}_{\varphi(\alpha)}$ is the multiplication operator associated with $\varphi$. In this case the action of $\varphi(hD_t)$ on $\e^{itN\sigma_{m,k}}$ is equivalent to $\varphi(hN\sigma_{m,k})$, since $U \e^{itN\sigma_{m,k}}=\sqrt{2\pi}\delta(N\sigma_{m,k})$. The operator $\psi(D_{\phi_2})$ is defined in the analogous way, where the associated unitary operator $U$ is the discrete Fourier transform in $\mu$. 

\begin{remark}\label{LGbound}
It is sufficient to establish an $L^{\infty}(M\times M)$ bound for $\varphi(hD_t)\psi(D_{\phi_2})G_I$ in order to show the $L^{1} \rightarrow L^{\infty}$ dispersive estimate in Theorem \ref{main}.
Indeed, by \eqref{kernel_I} and \eqref{greens_function1}, $G_I$ is the only component which depends on the variables $t$ and $\mu$. 
Assuming that $\varphi(hD_t)\psi(D_{\phi_2})G_I$ has an $L^{\infty}(M\times M)$ bound, we have for all $q\in C_c^{\infty}(M)$ that 
\begin{align}
\begin{aligned}\label{startingbound}
\sup\limits_{(\phi_1,\phi_2,\tau)}&\left|\varphi(hD_t)\psi(D_{\phi_2})\mathcal G_P(t)(q)\big|_{(\phi_1,\phi_2,\tau)}\right| \\
 &\leq
  \|\varphi(hD_t)\psi(D_{\phi_2})G_I\|_{L^{\infty}(M\times M)} \int\limits_0^{\infty}\int\limits_0^{2\pi}\int\limits_{-\pi}^{\pi}|q(\phi_1',\phi_2',\tau')|\, \dx\phi_1'\,\dx\phi_2' \,\dx \tau' \\
  &=
 \|\varphi(hD_t)\psi(D_{\phi_2})G_I\|_{L^{\infty}(M\times M)} \|q\|_{L^1(M)}. 
\end{aligned}
\end{align}
\end{remark}
We set $h_1\coloneqq hN\geq h\eta$, $\tilde{\phi}_1\coloneqq \phi_1-\phi_1'$ and $\tilde{\phi}_2\coloneqq\phi_2-\phi_2'$. Recalling that $\psi$ cuts off the Fourier series in $\mu$, we obtain that
\begin{align}
\begin{aligned}	\label{g}
&\varphi(hD_t)\psi(D_{\phi_2})G_{I}=\\& 
4\int\limits_{0}^{\infty}\sum\limits_{m}\sum\limits_{\mu^2<k}\cos(\tilde{\phi}_1m)\cos(\tilde{\phi}_2\mu) e_{k,\mu}(\tau) e_{k,\mu}(\tau')\varphi(h_1\sigma_{m,k}) \mathrm{cos}(N(\tau,\phi_1)t\sigma_{k,m}) \,\dx k.
\end{aligned}
\end{align}
For each $m\geq 0$, we choose a smooth compactly supported function $\psi_m$, which is one on the set 
\begin{align*}
\Big(\frac{b}{2}\Big)^2-m^2\leq k^2\leq \Big(\frac{3b}{2}\Big)^2-m^2;
\end{align*}
note that $\psi_2=\psi$.
We also notice that $k^2$ is positive, which restricts $m$ to the interval $\big[\frac{b}{2},\frac{3b}{2}\big]$. The set of support of $\varphi(h_1\sigma_{m,k})$ is therefore equivalent to that of restricting $m$ and multiplying by $\psi_m$. In other words, it is possible to arrange so that 
\begin{align*} 
\Big\{(m,k):\,\, \frac{b}{2}\leq m\leq \frac{3b}{2},\quad k\in \mathrm{supp}(\psi_m), k>0\Big\}
=\Big\{(m,k): \,\,(m,k)\in \mathrm{supp}(\varphi), k>0\Big\}.
\end{align*}
This change enables a stationary phase analysis in the single continuous spectral parameter $k$. 
The change of variables $(k,m)\rightarrow (\frac{k}{h_1},\frac{m}{h_1})$ yields 
\begin{align}
\begin{aligned}
&\varphi(hD_t)\psi(D_{\phi_2})G_{I}  \\
&=\frac{4}{h_1^2}\int\limits_{0}^{\infty}\sum\limits_{m=\frac{b}{2}}^{\frac{3b}{2}}\sum\limits_{\mu^2<\frac{k}{h_1}}\cos(\frac{\tilde{\phi}_1m}{h_1})\cos(\tilde{\phi}_2\mu) e_{\frac{k}{h_1},\mu}(\tau) e_{\frac{k}{h_1},\mu}(\tau') \psi_m(k)\mathrm{cos}(N(\tau,\phi_1)t\sigma_{\frac{k}{h_1},\frac{m}{h_1}}) \,\dx k \\
&=\frac{4}{h_1^2}\int\limits_{0}^{\infty}\sum\limits_{m=\frac{b}{2}}^{\frac{3b}{2}}\sum\limits_{\mu^2<\frac{k}{h_1}}\cos(\frac{\tilde{\phi}_1m}{h_1})\cos(\tilde{\phi}_2\mu) e_{\frac{k}{h_1},\mu}(\tau) e_{\frac{k}{h_1},\mu}(\tau') \mathrm{cos}(N(\tau,\phi_1)t\sigma_{\frac{k}{h_1},\frac{m}{h_1}}) \,\dx k.
\end{aligned}
\end{align}

The following two propositions, whose proofs are based on expansions of the Mehler-Fock kernel in terms of special functions, are the key ingredients to prove Theorem \ref{main}.  
\begin{proposition}\label{mainprop1}
There exist constants $C(\epsilon_0,\eta)$ and $C(\varepsilon)$ such that 
\begin{align*}
\|\varphi(hD_t)\psi(D_{\phi_2})G_{I}(t,\phi_1,\phi_2,\tau, \phi_1',\phi_2',\tau')&\|_{L^{\infty}(M; 1<\tau\leq \tau_1 )} \\
&\leq C(\epsilon_0,\eta)h^{-3}\left(\min \left\{1,\left(\frac{h}{t}\right)\right\}+hC(\varepsilon)\right),
\end{align*}
where $C(\epsilon_0,\eta)$ depends only on the inner and outer radii of the torus, $r,R$, the parameters $\epsilon_0, \eta$, and $C(\varepsilon)$ depends solely on $\varepsilon$.
\end{proposition}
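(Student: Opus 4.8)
The plan is to estimate the rescaled kernel
\begin{align*}
\varphi(hD_t)\psi(D_{\phi_2})G_{I}
=\frac{4}{h_1^2}\int_0^\infty\sum_{m=b/2}^{3b/2}\sum_{\mu^2<k/h_1}
\cos\!\Big(\tfrac{\tilde\phi_1 m}{h_1}\Big)\cos(\tilde\phi_2\mu)\,
e_{k/h_1,\mu}(\tau)\,e_{k/h_1,\mu}(\tau')\,
\cos\!\big(N t\,\sigma_{k/h_1,m/h_1}\big)\,\dx k
\end{align*}
directly in $L^\infty$ over the region $1<\tau\le\tau_1$, and to use Remark \ref{LGbound} to convert this into the $L^1\to L^\infty$ bound. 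I would first isolate the contribution of the two ``outer'' factors $e_{k/h_1,\mu}(\tau)e_{k/h_1,\mu}(\tau')$. Since $1<\tau\le\tau_1$ and $\tau'\ge\epsilon_0$ (by the hypothesis that $q\equiv0$ on $\{\tau'<\epsilon_0\}$, which enters through $\epsilon_0$ in the constant), these are bounded away from the singular point $\tau=0$; combining the definition of $e_{k,\mu}$ with the bound of Lemma \ref{tbound} (or Lemma \ref{midbound}) together with the explicit asymptotics $|c_{k,\mu}|=O(k^{-\mu+1/2})$ from \eqref{outc} gives, after the rescaling $k\mapsto k/h_1$, a uniform bound of the form $|e_{k/h_1,\mu}(\tau)e_{k/h_1,\mu}(\tau')|\le C(\epsilon_0,\tau_1)$ on the relevant $k$-support, with additional geometric decay in $\mu$ supplied by Lemma \ref{largemu} for the large-$\mu$ tail (this is what lets the $\mu$-sum, truncated by $\psi(D_{\phi_2})$ at $\mu^2<k/h_1$, be controlled and produces the $C(\varepsilon)$ term).

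Second, the essential oscillatory integral is $\int \chi(k)\,\e^{\pm iNt\,\sigma_{k/h_1,m/h_1}}\,\dx k$ where $\chi$ collects the smooth cutoffs $\psi_m$ (and the smooth remnants of $e_{k/h_1,\mu}$ after the leading special-function asymptotics from \eqref{largekleg} are extracted). The phase is $\Phi(k)=Nt\sqrt{(k/h_1)^2+(m/h_1)^2}=(Nt/h_1)\sqrt{k^2+m^2}$; its derivative $\Phi'(k)=(Nt/h_1)\,k/\sqrt{k^2+m^2}$ is nonvanishing and comparable to $Nt/h_1$ on $\supp\chi$ (recall $k\sim b$, $m\in[b/2,3b/2]$, $b\ge5$), and $\Phi''$ is likewise $\sim Nt/h_1$. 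A standard non-stationary phase / van der Corput estimate then yields $|\int\chi\,\e^{i\Phi}\,\dx k|\le C\min\{1,\,h_1/(Nt)\}=C\min\{1,\,h/t\}$ (using $h_1=hN$, so the $N$'s cancel), while the trivial bound gives the ``$1$'' branch. Multiplying by the $h_1^{-2}$ prefactor and by one more factor of $h^{-1}$ coming from the $k$-integration measure after rescaling (the $k$-support has length $\sim h_1\sim hN$, hence the $\dx k$ integral contributes an extra $h_1^{-1}$ relative to the normalized cutoff), and summing the finitely many $m$ with $b/2\le m\le 3b/2$, produces the claimed $h^{-3}\min\{1,h/t\}$; the $\mu$-sum contributes the additive $h^{-3}\cdot hC(\varepsilon)$ error term, since the part of the sum not covered by the clean stationary-phase analysis (the $\mu$ near the truncation threshold, and the lower-order pieces of the Legendre asymptotics) is controlled only by an $\varepsilon$-dependent geometric series and carries one extra power of $h_1$, i.e. of $h$.

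Third, I would assemble the pieces: the factor $4/h_1^2\le 4/(h\eta)^2$ brings in the $\eta$-dependence of the constant and two powers of $h^{-1}$; the $k$-integration brings the third; the outer eigenfunction factors and the $m$-sum are bounded by $C(\epsilon_0,\eta,r,R)$ (note $N$ and hence $\eta$ depend on $a=\sqrt{R^2-r^2}$ and $\tau_1$, i.e. on $r,R$); and the $\mu$-sum yields the $+hC(\varepsilon)$ correction. Keeping track of signs, the two $\cos$'s in $\phi_1,\phi_2$ and the $\cos(Nt\sigma)$ are each bounded by $1$ and only help, so no cancellation among them is needed.

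\emph{Main obstacle.} The delicate point is the uniformity of the stationary/non-stationary phase estimate in the spectral parameter after the rescaling, together with the interchange of the $k$-integral and the $\mu$-sum: one must show that the asymptotic expansion of $\mathsf P^{\mu}_{ik-\frac12}(\cosh\tau)$ in \eqref{largekleg} (and the error terms it generates) is sufficiently uniform in $\mu$ over the truncated range $\mu^2<k/h_1$, so that the remainder after extracting the leading oscillatory term is genuinely of size $O(hC(\varepsilon))$ and not larger. In other words, the real work is quantifying how the ``lower-order'' pieces of the special-function asymptotics, once summed against the $\varphi$- and $\psi$-cutoffs, contribute only the stated additive $h\,C(\varepsilon)$-error rather than spoiling the $\min\{1,h/t\}$ decay; the pure phase analysis itself is routine van der Corput once the amplitude is shown to be a smooth, compactly supported, uniformly bounded function of $k$.
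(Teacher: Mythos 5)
Your overall scaffolding matches the paper's: you use the $4/h_1^2$ prefactor, the $\psi_m$ localization, Remark \ref{LGbound}, and you recognize that the $\mu$-sum and the $\varepsilon$-cutoff are what produces the additive $hC(\varepsilon)$ error. But the heart of your argument --- the oscillatory integral estimate --- is set up in a way that would not close.

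You write the phase as $\Phi(k)=(Nt/h_1)\sqrt{k^2+m^2}$ and claim that ``$\Phi'$ is nonvanishing, so a non-stationary phase / van der Corput estimate gives $\min\{1,h/t\}$'' with ``the amplitude a smooth, compactly supported, uniformly bounded function of $k$ once the leading special-function asymptotics from \eqref{largekleg} are extracted.'' This is exactly where the argument breaks. The leading asymptotics of $\mathsf{P}^\mu_{ik/h_1-1/2}(\cosh\tau)$ in \eqref{largekleg} (equivalently, \eqref{largelarge}) carry the factors $\e^{\pm ik\tau/h_1}$ and $\e^{\pm ik\tau'/h_1}$, which oscillate at the same $1/h_1$ scale as the $t$-phase. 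If you leave them in the ``amplitude'' $\chi$, then $\|\chi'\|_{L^1}\sim 1/h_1$ and van der Corput's first-derivative estimate loses exactly the factor you were hoping to gain; if you (correctly) move them into the phase, the full phase becomes
\begin{align*}
\frac{f(k)}{h_1}=\frac{Nt\sqrt{k^2+m^2}\pm (\tau\mp\tau')\,k}{h_1},
\end{align*}
and this phase \emph{does} have an interior stationary point $k_0=m(\tau\pm\tau')/\bigl(t^2N^2\mp(\tau\pm\tau')^2\bigr)^{1/2}$. The paper's proof is a genuine stationary-phase argument: it writes $K^\mu(k,\tau)K^\mu(k,\tau')$ as $\sum_\pm b_k(\tau,\tau')\e^{\pm ik(\tau\pm\tau')}$ using \eqref{largelarge}, absorbs the exponentials into the phase, checks $f''(k_0)=m^2/(k_0^2+m^2)^{3/2}\neq 0$, bounds $1/|f''(k_0)|\leq C(\epsilon_0,\eta)\min\{1,1/(tN)\}$ on $\supp\varphi$, verifies $|\rho(k)|\leq 1$ using $|b_k|\lesssim\mu^2/k\cdot\coth\tau$ and $\coth\tau\leq 2$ on $\{1<\tau\}$, and then invokes Lemma \ref{stationary}. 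To repair your proposal you need to replace your non-stationary-phase step with this stationary-phase analysis; the conclusion $\min\{1,h/t\}$ is not available by non-stationary phase once the Legendre oscillations are correctly accounted for.

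Two smaller bookkeeping slips: after rescaling, the $m$-sum does \emph{not} run over ``finitely many'' $m$ --- the original index runs over $\sim b/h_1$ integers with $\sigma_{m,k}$ in $\supp\varphi$, so the $m$-sum contributes one of the three powers of $h^{-1}$ (the other two coming from the $4/h_1^2$ prefactor), not a mere $O(1)$ factor; and the rescaled $k$-integral is over an $O(1)$ interval (size $\sim b$), so the $\dx k$ measure does not itself supply the extra $h^{-1}$ as you claimed.
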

\begin{proof}
We set
\begin{align*}
&p_{\mu}(k,\tau,\tau') \coloneqq K^{\mu}(k,\tau)K^{\mu}(k,\tau').
\end{align*}
Using \eqref{largelarge} to expand in the region where $\tau,\tau'\geq 1$ and $k\geq 1$ gives
\begin{align}
\begin{aligned}\label{decomp}
K^{\mu}(k,\tau)
&= \Big(\frac{2}{\pi}\Big)^{\frac{1}{2}}\cos\!\Big(k\tau+(2\mu-1)\frac{\pi}{4}\Big) -\Big(\frac{2}{\pi}\Big)^{\frac{1}{2}}\frac{\mu^2-\frac{1}{4}}{2k}\coth \tau\sin\!\Big(k\tau+(2\mu-1)\frac{\pi}{4}\Big)\\
&\qquad + 
\cos\!\Big(k\tau+(2\mu-1)\frac{\pi}{4}\Big)R_1(\tau,k)+\sin\!\Big(k\tau+(2\mu-1)\frac{\pi}{4}\Big)R_2(k,\tau)\\
& \qquad +
\cos\!\Big(k\tau+(2\mu-1)\frac{\pi}{4}\Big)D_1(\tau,k)+\sin\!\Big(k\tau+(2\mu-1)\frac{\pi}{4}\Big)D_2(k,\tau)\\
&\eqqcolon
\tilde{k}_{\mu,+}(k,\tau)\e^{ik\tau}+\tilde{k}_{\mu,-}(\tau,\tau')\e^{-ik\tau}.
\end{aligned}
\end{align} 
In the last step we applied the identities $2\cos(k\tau)=\e^{ik\tau}+\e^{-ik\tau}$ and $2i\sin(k\tau)=\e^{ik\tau}-\e^{-ik\tau}$ to regroup the terms and used $\tilde{k}_{\mu,\pm}$ as a short-hand notation for the corresponding coefficients. 
Without loss of generality assume that $\tau'\leq \tau$. 
As a result we can decompose $p_{\mu}$ as follows
\begin{align}
p_{\mu}(k,\tau,\tau')=\tilde{p}_{\mu,+}(k,\tau,\tau')+\tilde{p}_{\mu,-}(k,\tau,\tau')
\end{align}
where 
\begin{align}
\tilde{p}_{\mu,\pm}(k,\tau,\tau')=b_{k}(\tau,\tau')\e^{\pm ik(\tau\pm\tau')}
\end{align}
and the coefficients $b_k(\tau,\tau')$ stemming from \eqref{largelarge} are such that 
\begin{align}
|b_{k}(\tau,\tau')|\leq C\frac{\mu^2}{k}\coth(\tau)\quad |\partial_k^j b_{k}(\tau,\tau')|\leq C\frac{\mu^2}{k^{1+j}}\coth(\tau),
\end{align}
where $C$ is independent of the other variables. This is easily seen by looking at the coefficients in the decomposition \eqref{decomp}. 
We have
\begin{align}
\coth(\tau)\leq 2
\end{align}
for $\tau$ in this range.  We then look for the stationary points of the oscillatory integrals 
\begin{align}
\int\limits_{0}^{\infty}\psi_m(k)\tilde{p}_{\mu,\pm}(\frac{k}{h_1},\tau,\tau') \e^{i\frac{\sqrt{k^2+m^2}}{h_1}tN}\,\dx k.
\end{align}  
We have that in the notation of Lemma \ref{stationary}
\begin{align}
\frac{f(k)}{h_1}=\frac{\sqrt{k^2+m^2}}{h_1}tN\pm \frac{(\tau\mp \tau')}{h_1}k.
\end{align}
Therefore the stationary points occur at $$\frac{tk N}{\sqrt{k^2+m^2}}=\pm (\tau\pm\tau').$$ The hypothesis of the Lemma \ref{stationary} are verifiable, $f''(k_0)\neq 0$ if:
\begin{align}
\frac{m^2}{(k^2+m^2)^{\frac{3}{2}}}>0
\end{align}
which occurs provided $m,k \neq 0$. This last situation does not occur by choice of cutoff, $\psi_m(k)$. 
From which it follows that we can solve for the stationary points explicitly to give
\begin{align}
k_0=m\left(\pm\frac{\tau\pm \tau'}{(t^2N^2\mp(\tau\pm\tau')^2)^{\frac{1}{2}}}\right).
\end{align}
It follows from the bounds on $b_k=\rho(k)$ in the notation of Lemma \ref{stationary} that the argument
\begin{align}
|\rho(k)|\leq 1.
\end{align}
This gives that 
\begin{align}
\frac{1}{|f''(k_0)|}=\frac{1}{tN}\left(\frac{(k_0^2+m^2)^{\frac{3}{2}}}{m^2}\right) \leq C(\epsilon_0,\eta)\min\{1,\frac{1}{tN}\}
\end{align}
for $k_0^2+m^2$ in the domain of the cutoff $\varphi$, where $C$ depends on $\varphi$. The desired result follows, after application of Lemma \ref{stationary} in the Appendix. We note that the total number of $\mu$ is bounded by $\sqrt{\frac{3b}{2h_1}}$. 
By scaling we have that 
\begin{align}
|\varphi(hD_t)\psi(D_{\phi_2})G_{I}|\leq  C(\epsilon_0)\left(\frac{1}{tN h_1^2}+hC(\varepsilon)\right).
\end{align}
whenever $th>1$. We see that the dependence on the $\eta$ is only used in the last step at the end. The second case $th\leq 1$ follows trivially. This concludes the proposition. 
\end{proof}
The second proposition below covers the $\tau$-interval $[\epsilon_0, 1]$.

\begin{proposition}\label{mainprop2}
There exist constants $C(\epsilon_0,\eta)$ and $C(\varepsilon)$ such that 
\begin{align*}
\|\varphi(hD_t)\psi(D_{\phi_2})G_{I}(t,\phi_1,\phi_2,\tau,\phi_1',\phi_2',\tau')&\|_{L^{\infty}(M; \epsilon_0<\tau\leq 1)} \\
&\leq 
C(\epsilon_0,\eta)h^{-3}\left(\min \left\{1,\left(\frac{h}{t}\right)\right\}+hC(\varepsilon)\right), 
\end{align*}
where $C(\epsilon_0,\eta)$ depends only on the inner and outer radii of the torus, $r,R$, the parameters $\epsilon_0, \eta$, and $C(\varepsilon)$ depends solely on $\varepsilon$.
\end{proposition}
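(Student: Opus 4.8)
The plan is to run the argument of Proposition \ref{mainprop1} almost verbatim, changing only the steps that relied on $\tau\ge1$. Starting from the rescaled form of $\varphi(hD_t)\psi(D_{\phi_2})G_I$ obtained above via $(k,m)\mapsto(k/h_1,m/h_1)$, $h_1=hN\ge h\eta$, the task is to bound, uniformly for $\tau\in(\epsilon_0,1]$ and $\tau'$ in the compact $\tau'$-support of the data, the sum over the finitely many $m\in[\tfrac b2,\tfrac{3b}{2}]$ and over the admissible $\mu$ (with $\mu^2<k/h_1$) of the oscillatory integrals $\int_0^\infty\psi_m(k)\,K^\mu\!\big(\tfrac{k}{h_1},\tau\big)K^\mu\!\big(\tfrac{k}{h_1},\tau'\big)\,\e^{itN\sigma_{k/h_1,m/h_1}}\,\dx k$. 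As in Proposition \ref{mainprop1} we may take $\epsilon_0\le\tau'\le\tau$; when instead $\tau<\tau'$ (which can happen since the $\tau'$-support reaches past $1$), the factor $K^\mu(k,\tau')$ is controlled by the estimates of Proposition \ref{mainprop1}.

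First I would split the $k$-integral at a fixed $k_0$, say $k_0=1$. On $0<k\le k_0$ the integrand is estimated directly: by Lemmas \ref{midbound}, \ref{tbound} and \ref{largemu}, using $\tau,\tau'\ge\epsilon_0$, the quantity $|c_{k,\mu}^2\,\mathsf{P}^{\mu}_{ik-\frac{1}{2}}(\cosh\tau)\,\mathsf{P}^{\mu}_{ik-\frac{1}{2}}(\cosh\tau')|$ is bounded by $C(\epsilon_0)\e^{-2\mu\beta_{\epsilon_0}}$ with $\beta_{\epsilon_0}>0$, which combined with \eqref{FTdecay} makes the double sum in $m,\mu$ absolutely convergent; after restoring the $h_1^{-2}$ prefactor and undoing the scaling this contributes the lower-order term $h\,C(\varepsilon)$. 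On $k\ge k_0$ I would instead use the large-$k$ asymptotics \eqref{largekleg} of the Mehler-Fock kernel, which is uniform for $\tau$ in compact subsets of $(0,\infty)$ and hence on $[\epsilon_0,1]$, to write $K^\mu(k,\tau)=\tilde k_{\mu,+}(k,\tau)\e^{ik\tau}+\tilde k_{\mu,-}(k,\tau)\e^{-ik\tau}$ exactly as in \eqref{decomp}, and then decompose the product as $\tilde p_{\mu,+}+\tilde p_{\mu,-}$ with $\tilde p_{\mu,\pm}=b_k(\tau,\tau')\,\e^{\pm ik(\tau\pm\tau')}$. The one genuine difference from Proposition \ref{mainprop1} is that the correction coefficients now carry factors $\coth\tau$ and $\csch\tau$, no longer bounded by an absolute constant but by $C(\epsilon_0):=\max\{\coth\epsilon_0,\csch\epsilon_0\}$ --- which is exactly where the $\epsilon_0$-dependence of the final constant enters --- so that $|\partial_k^j b_k(\tau,\tau')|\le C(\epsilon_0)\,\mu^2 k^{-1-j}$, $j=0,1,2$.

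Then I would feed the $k$-integral into Lemma \ref{stationary} with phase $f(k)=tN\sqrt{k^2+m^2}\pm(\tau\mp\tau')k$: the stationary point is $k_0=m\big(\pm\tfrac{\tau\pm\tau'}{(t^2N^2\mp(\tau\pm\tau')^2)^{1/2}}\big)$, one has $f''(k_0)\ne0$ on the support of $\psi_m$ because $m,k\ne0$ there, and $1/|f''(k_0)|=(k_0^2+m^2)^{3/2}/(m^2 tN)\le C(\epsilon_0,\eta)\min\{1,1/(tN)\}$ on the support of $\varphi$, using $N\ge\eta$ from Lemma \ref{Nbelow}. Summing over the finitely many $m$ and over the at most $\lesssim h_1^{-1/2}$ admissible $\mu$, and undoing the scaling (each residual $1/h_1$ being $\le1/(h\eta)$), gives $|\varphi(hD_t)\psi(D_{\phi_2})G_I|\le C(\epsilon_0,\eta)\big(\tfrac{1}{tN h_1^2}+h\,C(\varepsilon)\big)$ when $th>1$; the case $th\le1$ is trivial, and since $h_1=hN$ with $N\ge\eta$ this is the claimed bound. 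The main obstacle I anticipate is purely one of uniformity: that the large-$k$ expansion of $K^\mu(k,\tau)$ genuinely holds, with remainder and correction terms controlled, all the way down to $\tau=\epsilon_0$ --- where $\csch\tau$, $\coth\tau$ and the P\"oschl-Teller potential $V$ are large but finite --- and, simultaneously, that the $\mu^2k^{-1}$-type bounds on the corrections still outweigh the growth $\sim\sqrt{1/h_1}$ in the number of admissible $\mu$, so that summing the corrections over $\mu$ produces only the lower-order $hC(\varepsilon)$. Everything else --- the stationary-phase estimate and the $m$-summation --- is formally identical to Proposition \ref{mainprop1}, since it uses only $N\ge\eta$ and the compact supports of $\varphi$ and $\psi_m$.
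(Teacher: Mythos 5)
Your proposal diverges from the paper in the key step, and there is a genuine gap in it. You propose to re-use the plane-wave expansion from Proposition \ref{mainprop1} — i.e.\@ Schindler's asymptotic \eqref{largelarge} or, equivalently, \eqref{largekleg} — down to $\tau=\epsilon_0$, merely paying a price $\coth\tau,\csch\tau\le C(\epsilon_0)$. But \eqref{largelarge} is stated only for $|\tau|\ge 1$, and \eqref{largekleg} tells you explicitly that the relative error is $O(\min\{1,1/x\})$, which for $x\le1$ is $O(1)$, not $O(1/k)$. In the regime $\epsilon_0<\tau\le1$ the effective potential $(\mu^2-\tfrac14)\csch^2\tau\sim(\mu^2-\tfrac14)/\tau^2$ is at the border of the turning point, so the conical function is no longer ``cosine plus small corrections'': its behavior is Bessel-like, and there is no coefficient bound of the type $|\partial^j_k b_k|\le C(\epsilon_0)\mu^2 k^{-1-j}$ that you need to feed into Lemma \ref{stationary} with $|\rho|\le1$. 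In short, the $\epsilon_0$-dependence you anticipate shows up not as a larger absolute constant, but as a loss of the entire asymptotic structure the stationary-phase argument rests on.

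What the paper actually does is use Schindler's \emph{other} expansion \eqref{largesmall}, valid precisely for $k\ge1$, $\tau\le1$, which writes $K^\mu(k,\tau)=(1+\tilde k_\mu(k))(k\tau)^{1/2}J_{-\mu}(k\tau)+W_\mu(k,\tau)$, and then exploits the rescaling $k\mapsto k/h_1$: since $k\tau/h_1\ge\epsilon_0 b/(2h_1)$ is large, it is the \emph{Bessel function} $J_{-\mu}(k\tau/h_1)$ that is expanded into plane waves via \eqref{eqn:Hankelerror}, giving \eqref{bexpansion} with remainder $|2R_1(\mu,\tfrac{k\tau}{h_1})\sqrt{k\tau/h_1}|\le C h_1/(k\tau)\le C(\epsilon_0)h$. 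Only after this two-step reduction does the stationary-phase machinery (same phase, same stationary points $k_0$, same $1/|f''(k_0)|$ bound) apply, and the Bessel-remainder is precisely what produces the $hC(\varepsilon)$ term. Your proposal skips the intermediate Bessel representation and hence does not produce a controllable error for $\tau\in(\epsilon_0,1]$; to repair it you would have to invoke \eqref{largesmall} and the Hankel asymptotics \eqref{eqn:Hankelerror} as the paper does.
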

\begin{proof}
We replace the Bessel functions in the expansion of \ref{largesmall} with their asymptotic expansion in \eqref{eqn:Hankelerror}. This gives
\begin{align}\label{bexpansion}
\sqrt{\frac{k\tau}{h}}J_{-\mu}\Big(\frac{k\tau}{h}\Big)=\frac{1}{\sqrt{2\pi}}\left(\e^{i\frac{k \tau}{h}+\frac{\mu\pi}{2}-\frac{\pi}{4}}+\e^{-i\frac{k \tau}{h}+\frac{\mu\pi}{2}-\frac{\pi}{4}}\right)\left(1+2R_1\Big(\mu,\frac{k}{\tau}{h}\Big)\sqrt{\frac{k\tau}{h}}\right).
\end{align}
We have from \eqref{eqn:Hankelerror} the following bound for $R_1$; 
\begin{align}
\bigg|2R_1\Big(\mu,\frac{k}{\tau}{h}\Big)\sqrt{\frac{k\tau}{h}}\bigg|\leq C\frac{h}{k\tau}\leq C(\epsilon_0)h.
\end{align}
Writing again $2\cos(N\sigma t)=e^{itN\sigma}+e^{-itN\sigma}$, the integral \ref{g} can then be analyzed by substituting \eqref{bexpansion} into \eqref{largesmall} and so that we are computing the expansion
\begin{align*}
&\frac{2}{\pi h_1^2}\int\limits_{0}^{\infty}\sum\limits_{m=\frac{b}{2}}^{\frac{3b}{2}}\sum\limits_{\mu^2<\frac{k}{h_1}}\left(\e^{i\frac{k \tau}{h_1}+\frac{\mu\pi}{2}-\frac{\pi}{4}}+\e^{-i\frac{k \tau}{h_1}+\frac{\mu\pi}{2}-\frac{\pi}{4}}\right)\left(1+2R_1\Big(\mu,\frac{k}{\tau}{h_1}\Big)
\sqrt{\frac{k\tau}{h_1}}\right)\times\\&\left(\e^{-i\frac{k \tau'}{h_1}-\frac{\mu\pi}{2}+\frac{\pi}{4}}+\e^{i\frac{k \tau'}{h_1}-\frac{\mu\pi}{2}+\frac{\pi}{4}}\right)\left(1+2\overline{R_1\Big(\mu,\frac{k h_1}{\tau'}\Big)}\sqrt{\frac{k\tau'}{h_1}}\right)\times\\&
\left(\cos\!\Big(\frac{\tilde{\phi}_1m}{h_1}\Big)\cos(\tilde{\phi}_2\mu) \e^{iN(\tau,\phi_1)t\sigma_{\frac{k}{h_1},\frac{m}{h_1}}}+\mathcal{O}_{\mu}\left(\frac{h_1}{k}\right)\right)\psi_m(k)\,\dx k.
\end{align*}
It follows again that the stationary points occur as in Proposition \ref{mainprop1}. The stationary point for 
\begin{align}
\int\limits_{0}^{\infty}\psi_m(k) \e^{i\frac{\sqrt{k^2+m^2}}{h_1}tN}\,\dx k
\end{align}  
occurs at $$\frac{tk N}{\sqrt{k^2+m^2}}=\pm (\tau\pm\tau')$$ where $C$ depends on $\varphi$ which gives the result, after application of Lemma \ref{stationary}. The expansion for the Hankel function can be differentiated term by term in $k$, for finite $k$. The hypothesis of the Lemma \ref{stationary} are verifiable provided $m,k \neq 0$ which it does not by choice of cutoff. We have that 
\begin{align}
\frac{f(k)}{h_1}=\frac{\sqrt{k^2+m^2}}{h_1}tN\pm \frac{(\tau\mp \tau')}{h_1}k.
\end{align}
From which it follows that the stationary points are given explicitly by
\begin{align}
k_0=m\left(\pm\frac{\tau\pm \tau'}{(t^2N^2\mp(\tau\pm\tau')^2)^{\frac{1}{2}}}\right).
\end{align}
This gives that 
\begin{align}
\frac{1}{|f''(k_0)|}=\frac{1}{tN}\left(\frac{(k_0^2+m^2)^{\frac{3}{2}}}{m^2}\right) \leq C(\epsilon_0,\eta)\min\!\Big\{1,\frac{1}{tN}\Big\}
\end{align}
for $k^2_0+m^2$ in the domain of the cutoff $\varphi$. It follows that in the notation of Lemma \ref{stationary} we have that 
\begin{align}
\rho(k)=\frac{1}{\sqrt{2\pi}}\left(1+2R_1\Big(\mu,\frac{k}{\tau}{h_1}\Big)\sqrt{\frac{k\tau}{h_1}}\right)
\end{align}
so $|\rho(k)|\leq 1$. From which we have that for $th>1$,
\begin{align}
|\varphi(hD_t)\psi(D_{\phi_2})G_{I}|\leq C(\epsilon_0)\left(\frac{1}{tN h_1^2}+hC(\varepsilon)\right).
\end{align}
The second region $th\leq 1$ follows trivially. This concludes the proof of the proposition. 
\end{proof} 
\begin{proof}[Proof of Theorem \ref{main}]
We combine Propositions \ref{mainprop1} and \ref{mainprop2} along with Lemma \ref{Nbelow}. The representation \eqref{greens_function1} together with \eqref{startingbound} in Remark \ref{LGbound}, which holds for all $q$ in the dense subclass implies the operator norm bound.
\end{proof}

\section{Conclusion and Future Directions} \label{conclusion}
\noindent
This paper presents a novel parametrix for the Dirichlet wave equation in the exterior of a torus in three dimensions. It would be interesting to see if the parametrix for the principal symbol could be improved to reach all the way up to the boundary of the torus. This might be possible using the representation for the boundary conditions in Remark \ref{bcrmk} and a modified Mehler-Fock transform. An analysis of trapping in the inner ring using billiard flows is the subject of future work. A bound on a larger range of frequencies in the $\phi_2'$ variable, e.g.\@ a removal of $\psi(D_{\phi_2})$, could be achieved by finer special function theory asymptotics which so far do not exist. Finer special function theory estimates for the associated Legendre functions would give as an application a wider range of $\mu$. In particular higher values of $\mu$ than in this spectral cutoff range might not give estimates in analogy to the Euclidean free space dispersive estimate, as they are no longer asymptotic to Bessel function estimates.

\appendix
\section{Special functions} \label{appA}
\noindent
Our analysis relies on certain aspects of special functions, which we briefly review here.
\subsection{Gamma function identities}
The Gamma function is defined for $z\in \mathbb{C}$ with $\mathrm{Re}z>0$ by 
\begin{equation*}
	\Gamma\left(z\right)=\int_{0}^{\infty} \e^{-t}t^{z-1}\,\mathrm{d}t;
\end{equation*}
it extends analytically to $\C\setminus\{0,-1,-2,\dots\}$. 
For $x,y\in\R$ it holds that
\begin{equation*}
	|\Gamma\left(x+\mathrm{i}y\right)|\leq|\Gamma\left(x\right)|.
\end{equation*}
Furthermore,
\begin{align}\label{gammab}
|\Gamma(iy)|^2=\frac{\pi}{y\sinh(\pi y)},
\end{align}
and for half-integer real parts, it holds that 
\begin{align}\label{gammahalf}
|\Gamma(\frac{1}{2}\pm n+iy)|^2=\frac{\pi}{\cosh(\pi y)}\prod_{k=1}^n\left((k-\frac{1}{2})^2+y^2\right)^{\pm 1}, \quad n\in \mathbb{N}.
\end{align}
We also have the doubling formula:
\begin{align}\label{doubling}
\Gamma(z)\Gamma(z+\frac{1}{2})=\sqrt{\pi}2^{1-2z}\Gamma(2z).
\end{align}

\subsection{Bessel, modified Bessel \& Hankel functions}\label{lob}
The Bessel function of the first kind, denoted by $J_n(z)$, satisfies Bessel's equation
\begin{equation*}
	z^{2}\frac{{\mathrm{d}}^{2}w}{{\mathrm{d}z}^{2}}+z\frac{\mathrm{d}w}{\mathrm{d%
		}z}+(z^{2}-n^{2})w=0.
\end{equation*}
The modified Bessel function of the first kind, denoted by $I_n(z)$, can be written in terms of the Bessel function of the first kind:
\begin{align}
I_n(z)=i^{-n}J_n(iz)= \e^{-n\pi\frac{i}{2}}J_n(z \e^{\frac{i\pi}{2}}).
\end{align}
We also have that
\begin{align}
\frac{\dx}{\dx z}J_{n}(z)=J_{n-1}(z)-\frac{n}{z}J_{n}(z),
\end{align}
which is also satisfied by the Hankel functions of the first and second kind $H^{(1)}_n(z)$ and $H^{(2)}_n(z)$.

We have that (cf.\@ \cite[Exercise 9.1 \& (9.07)]{olverbook}) 
\begin{align}\label{smallbessel}
J_\nu(z)=\frac{(\frac{z}{2})^{\nu} \e^{-iz}}{\Gamma(\nu+1)}M(\nu+\frac{1}{2},2\nu+1,2iz),
\end{align}
where 
\begin{align}
M(a,c,z)=\frac{\Gamma(c)}{\Gamma(a)\Gamma(c-a)}\int\limits_0^1t^{a-1}(1-t)^{c-a-1} \e^{zt}\,\dx t, \quad \mathrm{Re}(c)>\mathrm{Re}(a)>0.
\end{align}
We note that for $z\in\R$,
\begin{align}
\Big|M\Big(\nu+\frac{1}{2},2\nu+1,2iz\Big)\Big|\leq 1  \quad\text{and}\quad \Big|\frac{\dx}{\dx z}M\Big(\nu+\frac{1}{2},2\nu+1,2iz\Big)\Big|\leq 1.
\end{align}
The relationship to the Hankel functions of the first and second kind is as follows:
\begin{align}
2J_{\nu}(z)=H^{(1)}_{\nu}(z)+H^{(2)}_{\nu}(z). 
\end{align}
This is useful for the expansion of the Bessel functions for large $|z|$. Let 
\begin{equation*}
	a_{k}(\nu)=\frac{(4\nu^{2}-1^{2})(4\nu^{2}-3^{2})\cdots(4\nu^{2}-(2k-1)^{2})}{%
		k!8^{k}},
\end{equation*}
where $a_0(\nu)=1$. From \cite[10.17.13--15]{NIST}, we have that 
\begin{align}\label{eqn:Hankelerror}
{H^{(1)}_{\nu}}\left(z\right)=\left(\frac{2}{\pi z}\right)^{\frac{1}{2}} \e^{%
i\omega}\left(\sum_{k=0}^{\ell-1}(\pm i)^{k}\frac{a_{k}(\nu)}{z^{k}}+R_{%
\ell}^{\pm}(\nu,z)\right),
\end{align}
with $\ell \in \N$, $\omega=z-\tfrac{1}{2}\nu\pi-\tfrac{1}{4}\pi$ and
\[\left|R_{\ell}^{\pm}(\nu,z)\right|\leq 2|a_{\ell}(\nu)|\mathcal{V}_{z,\pm i%
\infty}\left(t^{-\ell}\right)\*\exp\left(|\nu^{2}-\tfrac{1}{4}|\mathcal{V}_{z,%
\pm i\infty}\left(t^{-1}\right)\right),\]
where $\mathcal{V}_{z,i\infty}\left(t^{-\ell}\right)$ can be estimated in various sectors as follows
\[\mathcal{V}_{z,i\infty}\left(t^{-\ell}\right)\leq\begin{cases}|z|^{-\ell},&0%
\leq\operatorname{ph}z\leq\pi,\\
\chi(\ell)|z|^{-\ell},&\parbox[t]{224.037pt}{$-\tfrac{1}{2}\pi\leq%
\operatorname{ph}z\leq 0$ or
$\pi\leq\operatorname{ph}z\leq\tfrac{3}{2}\pi$,}\\
2\chi(\ell)|\Im z|^{-\ell},&\parbox[t]{224.037pt}{$-\pi<\operatorname{ph}z\leq%
-\tfrac{1}{2}\pi$ or
$\tfrac{3}{2}\pi\leq\operatorname{ph}z<2\pi$.}\end{cases}\]
Here, $\chi(\ell)$ is defined by
\begin{align*}
 \chi(x) := \pi^{1/2} \Gamma\left(\tfrac{1}{2}x+1\right)/\Gamma \left(\tfrac{1}{2}x+\tfrac{1}{2}\right).
\end{align*}
For $z \to 0$, we have the estimate \cite[10.7.2\&7]{NIST},
\begin{align}\label{eqn:HankelSmallAsympt}
 {H^{(1)}_{\nu}}\left(z\right) &\sim 
 \begin{cases}
  -(i/\pi) \Gamma\left(\nu\right)(\tfrac{1}{2}z)^{-\nu} & \text { for } \nu > 0, \\
  (2i/\pi)\log z & \text{ for } \nu = 0,
 \end{cases}
\end{align}
were $\log$ is the principal branch of the complex logarithm.

\section{The Mehler-Fock Kernel}\label{appB}
\noindent
The associated Legendre functions $\mathsf{P}_{\nu}^{\mu}(\pm x)$ and $\mathsf{P}_{\nu}^{-\mu}(\pm x)$ solve the differential equation 
\begin{align}
\left(1-x^{2}\right)\frac{{\mathrm{d}}^{2}w}{{\mathrm{d}x}^{2}}-2x\frac{%
\mathrm{d}w}{\mathrm{d}x}+\left(\nu(\nu+1)-\frac{\mu^{2}}{1-x^{2}}\right)w=0.
\end{align}
For $\mu, k\in \mathbb{R}^+$ we define the integral kernel
\begin{align}
K_{\mu}(k,x) \coloneqq c_{k,\mu}\sqrt{\sinh x} \,\mathsf{P}^{\mu}_{-\frac{1}{2}+ik}(\cosh(x))
\end{align}
with 
\begin{align}\label{c_k}
c_{k,\mu}^2 \coloneqq \frac{k \sinh(\pi k)}{\pi}\Gamma(\frac{1}{2}-\mu+ik)\Gamma(\frac{1}{2}-\mu-ik).
\end{align}
This kernel satisfies the symmetry
\begin{align}\label{minusmu}
(-1)^{\mu}K_{-\mu}(k,x)=K_{\mu}(k,x).
\end{align}
We consider the following two integral operators from \cite[(1.5)--(1.8)]{schindler}: 
\begin{align}
\begin{aligned}\label{pairop}
&G_{\mu}(F,x)=\int\limits_0^{\infty}F(k)K_{\mu}(k,x)\,\dx k \quad F\in L^1([0,\infty)), \\&
H_{\mu}(G, k)=\int\limits_0^{\infty}G(x)K_{\mu}(k,x)\,\dx x \quad G\in L^1([0,\infty)).
\end{aligned}
\end{align}
Let us recall the class $\mathcal A$ from Definition \ref{def_A}. 
One can show that $G$ is of the form $g(\cosh(x))\sqrt{(\sinh x)}$ with $g\in L^1([1,\infty))$, and $F$ is of the form $\frac{f}{c_{k,\mu}}$ with $f\in L^1[0,\infty)$ such that $f'\in L^1_{loc}$ and $f(0)=0$.
\begin{theorem}\label{Fock}
The operators $G_\mu$ and $H_\mu$ are inverses of each other on the class $\mathcal{A}$.
\end{theorem}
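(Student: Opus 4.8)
The plan is to reduce the claim to the classical Mehler-Fock inversion theorem for the conical (Mehler) functions $\mathsf P^{\mu}_{-1/2+ik}$ and then verify that the class $\mathcal A$ is exactly the right functional setting in which the two operators $G_\mu$ and $H_\mu$ compose to the identity in both orders. Concretely, I would first record the classical statement: for suitable $G$ of the form $G(x)=g(\cosh x)\sqrt{\sinh x}$, the pair of transforms
\begin{align*}
\widehat g(k) = \int_0^\infty g(\cosh x)\, \mathsf P^{\mu}_{-1/2+ik}(\cosh x)\,\sinh x\,\dx x,
\qquad
g(\cosh x) = \int_0^\infty \widehat g(k)\, \mathsf P^{\mu}_{-1/2+ik}(\cosh x)\, \frac{k\sinh(\pi k)}{\pi}\,\big|\Gamma(\tfrac12-\mu+ik)\big|^2\,\dx k
\end{align*}
are mutually inverse; this is the standard Mehler-Fock theorem for order $\mu$, whose weight is precisely $c_{k,\mu}^2$ as in \eqref{c_k}. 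The point is that $K_\mu(k,x)=c_{k,\mu}\sqrt{\sinh x}\,\mathsf P^{\mu}_{-1/2+ik}(\cosh x)$ symmetrizes the weight: writing $G_\mu$ and $H_\mu$ in terms of $K_\mu$ absorbs one factor of $c_{k,\mu}$ on each side, so that $H_\mu G_\mu = \mathbf 1$ and $G_\mu H_\mu = \mathbf 1$ become the two directions of the classical inversion after the substitutions $F = f/c_{k,\mu}$, $G = g(\cosh x)\sqrt{\sinh x}$ described just before the theorem.

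The steps, in order, would be: (i) translate the definitions \eqref{pairop} of $G_\mu$, $H_\mu$ into the unweighted conical-function transforms by peeling off the $c_{k,\mu}$ and $\sqrt{\sinh x}$ factors; (ii) check that $G\in\mathcal A$ guarantees $g(\cosh\cdot)\in L^1([1,\infty))$ and enough local regularity ($g',g''$ locally integrable, controlled behaviour as $x\to0^+$ via the $O(1)$ condition and as $x\to\infty$ via the $o(e^{-x})$ condition) for the classical inversion to apply and for all the integrals to converge absolutely — here Lemmas \ref{tbound}, \ref{midbound} and \ref{largemu} provide the pointwise bounds on $\mathsf P^{\mu}_{ik-1/2}(\cosh x)$ needed to justify the interchange of the $k$- and $x$-integrations (Fubini); (iii) invoke the classical Mehler-Fock theorem to conclude $H_\mu G_\mu F = F$ for $F = f/c_{k,\mu}$ in the stated class, using that $f(0)=0$ is exactly the compatibility condition matching the vanishing of $K_\mu(k,0)$ from Lemma \ref{checkboundary}; (iv) run the same argument in the opposite order for $G_\mu H_\mu G = G$, again using the pointwise bounds to justify Fubini and the decay at $\infty$ to ensure the boundary terms in any integration-by-parts step vanish.

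The main obstacle I expect is step (ii)–(iv): verifying that the class $\mathcal A$ is preserved and that all the iterated integrals converge absolutely so that Fubini is legitimate. The weight $c_{k,\mu}^2 \sim k$ for large $k$ (from \eqref{gammahalf}), so the inner transform $\widehat g(k)$ must decay faster than $k^{-1}$ to make the outer $k$-integral converge; this is where one must integrate by parts in $x$ twice — legitimate because $g',g''\in L^1_{\loc}$ and because the $o(e^{-x})$ decay kills the boundary contributions at infinity while the $O(1)$ condition plus the $\sqrt{\sinh x}$ vanishing handles $x=0$ — and use the large-$k$ Legendre asymptotics (the $\e^{ik\tau}$-type behaviour, cf.\@ \eqref{largekleg}/\eqref{largelarge}) to gain the needed powers of $k^{-1}$. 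Once absolute convergence is in hand, the actual inversion is just the classical theorem, so the real content of the proof is this bookkeeping showing $\mathcal A$ is the correct domain. I would present (i)–(iv) as a short sequence of lemmaic remarks and cite a standard reference (e.g.\@ \cite{schindler} or the Mehler-Fock literature) for the classical statement itself.
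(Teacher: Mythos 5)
Your proposal takes essentially the same route as the paper: the paper gives no independent proof of Theorem~\ref{Fock}, but instead notes that $\mathcal A$ matches the hypothesis class of the classical (generalized) Mehler--Fock inversion theorem and cites \cite[Theorem 1]{nasim} and \cite[Theorems 1\&2]{thesis} for it. Your steps (i)--(iv) are just a more explicit spelling-out of that same reduction and deferral to the classical literature, which the paper simply compresses into a citation.
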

\noindent
For proofs in modern-day language we refer to \cite[Theorem 1]{nasim} and \cite[Theorems 1\&2]{thesis}. Note that the conditions in \cite[Theorem 1]{thesis} are equivalent to those in \cite[Theorem 1]{nasim} (namely the restriction to the class $\mathcal A$). 

We also provide the following asymptotic expansion for the associated Legendre function. 
As a direct consequence of \cite[Theorem 3.1]{daalhuis} it holds for $\mu\in \mathbb{C}$ and $x\in\mathbb{R}^+$  that 
\begin{align}\label{largekleg}
\mathsf{P}_{ik-\frac{1}{2}}^{\mu}(\cosh(x))=k^{\mu-\frac{1}{2}}\sqrt{\frac{2}{\pi\sinh x}}\cos(xk+\frac{\pi}{4}(2\mu-1))\left(1+O\Big(\min\!\Big\{1,\frac{1}{x}\Big\}\Big)\right)
\end{align}
as $\R\ni k\rightarrow \infty$. 

\subsection{Asymptotics for the kernel}
We provide here a collection of asymptotic estimates taken from \cite{schindler} for the integral kernel
\begin{align}
K^{\mu}(k,\tau) = c_{k,\mu}\sqrt{\sinh(\tau)}\mathsf{P}^{\mu}_{ik-\frac{1}{2}}(\cosh(\tau)).
\end{align}
The expansions in have been enhanced to account for the $\mu$-dependencies in the constants.  
\begin{itemize}
\item 
Equation \cite[(2.1.6)]{schindler} gives the following expansion for $|k|\geq 1, |\tau|\geq 1$:
\begin{align}
\begin{aligned}\label{largelarge} 
K^{\mu}(k,\tau)&=\Big(\frac{2}{\pi}\Big)^{\frac{1}{2}}\cos\!\Big(k\tau+(2\mu-1)\frac{\pi}{4}\Big) -\Big(\frac{2}{\pi}\Big)^{\frac{1}{2}}\frac{\mu^2-\frac{1}{4}}{2k}\coth \tau\sin\!\Big(k\tau+(2\mu-1)\frac{\pi}{4}\Big)  \\
&\quad+ 
\cos\!\Big(k\tau+(2\mu-1)\frac{\pi}{4}\Big)R_1(\tau,k)+\sin\!\Big(k\tau+(2\mu-1)\frac{\pi}{4}\Big)R_2(k,\tau)\\&\quad 
+\cos\!\Big(k\tau+(2\mu-1)\frac{\pi}{4}\Big)D_1(\tau,k)+\sin\!\Big(k\tau+(2\mu-1)\frac{\pi}{4}\Big)D_2(k,\tau),
\end{aligned}
\end{align}
where $R_j(k,\tau)=S_j(k)+\frac{T_j(k)}{e^{2\tau}-1}$ with
\begin{align}
S_j(k), T_j(k)=O_{\mu}(k^{-2}) \quad S'_j(k),T'_j(k)=O_{\mu}(k^{-3})
\end{align}
and 
\begin{align}
D_j(k,\tau), \frac{\partial}{\partial k}D_j(k,\tau)=O_{\mu}(k^{-2}\e^{-4y}).
\end{align}
In order to obtain constants $C$, which are independent of $\mu$, Proposition \ref{stirling} (a variant of Stirling's formula) is applied. Here we apply it directly to \cite[(2.1.3)]{schindler} with $z=\frac{1}{2}-\mu+ik$ and $z'=\frac{1}{2}+ik$, and $\mu^2\leq k$ for the errors with $S_j$ and $T_j$ which are bounded as follows. But the other errors are not better than $\frac{\mu^4}{k^2}$, which implies in actuality
\begin{align}
|S_j(k)|, |T_j(k)|\leq C(\e^{\frac{\mu^2}{k^{2}}}) \quad |S'_j(k)|,|T'_j(k)|\leq C(\frac{\mu^2}{k^3}\e^{\frac{\mu}{k}})
\end{align}
and the explicit form of the sum above equation \cite[(2.1.5)]{schindler} gives
\begin{align}
|D_j(k,\tau)|, |\frac{\partial}{\partial k}D_j(k,\tau)|\leq C_{\mu}\Big(\e^{\frac{\mu^2}{k^{2}}}\e^{-4y}\Big).
\end{align}
These bounds are independent of the size of $\mu$ relative to $k$ if $k\geq 1$. It is clear that the leading order terms presented are then only small for $\mu<\sqrt{k}$. The last constant is difficult to identify but only depends on $\mu$. 

\item For $|k|\leq 1, |\tau|\leq 1$, \cite[(2.3.4)]{schindler} gives
\begin{align*}
K^{\mu}(k,\tau) &=  \frac{|\Gamma(\frac{1}{2}-m)|}{\Gamma(1-m)}\frac{(\cosh \tau+1)^m}{(\sinh\tau)^{\mu-\frac{1}{2}}}k +\Big|\Gamma\Big(\frac{1}{2}-\mu\Big)\Big|k t_1(k,\tau)
\\
&\quad+\frac{1}{\Gamma(1-\mu)}
\frac{(\cosh \tau+1)^{\mu}}{(\sinh \tau)^{\mu-\frac{1}{2}}}s(k)+s(\tau)t_1(k,\tau),
\end{align*}
where $t_1(k,\tau)=O_{\mu}(\tau^{\frac{5}{2}-\mu})$, $s(k)=O_{\mu}(k^3)$. 
\item For $|k|\leq 1, |\tau|\geq 1$, \cite[(2.2.3)]{schindler} yields
\begin{align*}
K^{\mu}(k,\tau)= &\pm\Big(\frac{2}{\pi}\Big)^{\frac{1}{2}}\sin(k\tau)+\cos(k\tau)h_1(k)+\sin(k\tau)h_2(k)
\\
&+\cos(k\tau)s_1(k,\tau)+\sin(k\tau)s_2(k,\tau),
\end{align*}
where $h_j(k)=O_{\mu}(k)$ and $s_j(k,\tau)=O_{\mu}(\e^{-2\tau})$. 
\item From \cite[(2.4.9)]{schindler} we have that for $|k|\geq 1, |\tau|\leq 1$,
\begin{align}\label{largesmall}
K^{\mu}(k,\tau)=(1+\tilde{k_\mu}(k))(k \tau)^{\frac{1}{2}}J_{-\mu}(k \tau)+W_\mu(k,\tau)
\end{align}
with 
\begin{align}
|W_\mu(k,\tau)|\leq 
\begin{cases} 
C_{\mu}(k^{-\mu+\frac{1}{2}}\tau^{-\mu+\frac{5}{2}})  \quad \tau k\geq 1 \\
C_{\mu}(k^{-1}\tau) \quad \tau k\leq 1
\end{cases}
\end{align}
and $|\tilde{k_{\mu}}(k)|\leq C(\e^{\frac{\mu^4}{k^2}})$. 
As in \eqref{largelarge}, these error estimates have been updated using Stirling's formula (Proposition \ref{stirling}). The constant $C_{\mu}$ depends only on $\mu$.  
\end{itemize}

\subsection{Further error estimates} 
We employ the following version of the stationary phase principle, to infer the dispersive estimates. 
\begin{lemma}\label{stationary}
Let $f\in C^\infty(\R)$ with $f'(x_0)=0, f''(x_0)\neq 0$ for $x_0\in\R$, and let $\rho\in W^{1,1}_\mathrm{comp}(\mathbb{R})$. Then  
\begin{align}
\left |\int_\R\rho(x)\e^{i\frac{f(x)}{h}}\,\dx x  -\rho(x_0) \e^{i\frac{f(x_0)}{h}+\mathrm{sgn}(f''(x_0))\frac{i\pi}{4}}\left(\frac{2\pi h}{|f''(x_0)|}\right)^{\frac{1}{2}}\right |
\leq \|\rho\|_{W^{1,1}}h^{\frac{1}{2}}.
\end{align}
for all $h\in (0,\frac{1}{2})$.
\end{lemma}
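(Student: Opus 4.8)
\textbf{Proof plan for Lemma \ref{stationary}.}
The plan is to prove the standard one-dimensional stationary phase estimate by a localization-plus-comparison argument, trading the unknown phase $f$ for its quadratic Taylor polynomial at $x_0$ and controlling all the errors by the $W^{1,1}$-norm of $\rho$. First I would split $\rho = \rho\chi + \rho(1-\chi)$, where $\chi$ is a smooth cutoff supported in a fixed neighborhood $(x_0-\delta, x_0+\delta)$ on which $f''$ does not vanish and $|f'(x)| \geq c|x-x_0|$; on the complementary piece $\rho(1-\chi)$ the phase $f$ has no critical point, so a single integration by parts (using $\e^{if/h} = \tfrac{h}{i f'}\,\tfrac{\dx}{\dx x}\e^{if/h}$) gives a contribution of size $\mathcal{O}(h)\|\rho\|_{W^{1,1}}$, which is absorbed in the claimed $h^{1/2}$ bound since $h<\tfrac12$. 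The delicate part is the localized piece near $x_0$.

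On the support of $\chi$ I would use the Morse lemma / change of variables $y = \pm\sqrt{|f(x)-f(x_0)|}$ (the Morse substitution), which is a $C^\infty$ diffeomorphism near $x_0$ with $y'(x_0)\neq 0$ because $f''(x_0)\neq 0$, to reduce the integral to a pure Gaussian-type oscillatory integral $\int \tilde\rho(y)\,\e^{\pm i y^2/h}\,\dx y$ with $\tilde\rho \in W^{1,1}_{\mathrm{comp}}$ and $\|\tilde\rho\|_{W^{1,1}} \lesssim \|\rho\|_{W^{1,1}}$. Then I would invoke (or reprove by contour rotation / the Fresnel integral $\int_\R \e^{\pm i y^2/h}\,\dx y = (2\pi h)^{1/2}\e^{\pm i\pi/4}$) the elementary fact that
\begin{align*}
\Big| \int_\R \tilde\rho(y)\,\e^{\pm i y^2/h}\,\dx y - \tilde\rho(0)\,\e^{\pm i\pi/4}(2\pi h)^{1/2}\Big| \leq C\,\|\tilde\rho'\|_{L^1}\,h,
\end{align*}
which follows by writing $\tilde\rho(y) = \tilde\rho(0) + \int_0^y \tilde\rho'(s)\,\dx s$, using the exact Fresnel integral for the constant term, and estimating the remainder by Fubini together with the uniform bound $\big|\int_a^\infty \e^{\pm i y^2/h}\,\dx y\big| \lesssim h^{1/2}$ (van der Corput with a derivative) — this yields a gain of a full power $h$, not merely $h^{1/2}$. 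Undoing the substitution turns $\tilde\rho(0)$ back into $\rho(x_0)|y'(x_0)|^{-1} = \rho(x_0)\,(2/|f''(x_0)|)^{1/2}\cdot(\text{constant})$ and the stationary value $f(x_0)$ reappears in the phase, producing exactly the amplitude $\rho(x_0)(2\pi h/|f''(x_0)|)^{1/2}$ and the phase factor $\e^{if(x_0)/h + \mathrm{sgn}(f''(x_0))i\pi/4}$; finally one bounds the leftover piece $\rho(x_0)$ minus its contribution from the Morse chart, and the cutoff commutator terms, all by $\|\rho\|_{W^{1,1}}h^{1/2}$.

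The main obstacle I anticipate is bookkeeping the constants so that the final bound is genuinely $\|\rho\|_{W^{1,1}}h^{1/2}$ with \emph{no} dependence on $f$ beyond $f''(x_0)$: the Morse substitution and the cutoff both introduce factors depending on higher derivatives of $f$ and on the size of the neighborhood $\delta$, so some care is needed to either state the lemma with the implicit assumption that such an $f$-dependent constant is allowed, or to absorb those factors into the $h^{1/2}$ (which is legitimate precisely because $h<\tfrac12$, so any fixed constant times $h$ is dominated by a constant times $h^{1/2}$). A cleaner route that sidesteps the Morse lemma altogether — and the one I would actually prefer to write down — is to integrate by parts directly: on the non-stationary part use $\partial_x$ against $1/f'$ once; near $x_0$, write $\e^{if/h}$ and compare with $\e^{i(f(x_0)+\frac12 f''(x_0)(x-x_0)^2)/h}$, estimating the difference of phases by $|f(x)-f(x_0)-\tfrac12 f''(x_0)(x-x_0)^2| \lesssim |x-x_0|^3$ and using $|\e^{ia}-\e^{ib}|\le|a-b|$ together with $\int |x-x_0|^3/h \cdot |\rho|$; this is transparent but gives only $h^{1/2}$ after optimizing the size of the localization, which is exactly what the lemma claims, so it suffices.
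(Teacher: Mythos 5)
The paper states Lemma \ref{stationary} without proof---it is presented as a black-box version of the stationary phase principle---so there is no paper argument to compare against. Both of your proposed routes (Morse substitution followed by a Fresnel computation, or direct comparison of $f$ with its quadratic Taylor polynomial at $x_0$) are standard and valid ways to prove this one-dimensional estimate, and your second, more elementary route is the cleaner one given that the target is only $h^{1/2}$.

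Two inaccuracies are worth flagging. First, in the Morse route you claim
\[
\Big| \int_\R \tilde\rho(y)\,\e^{\pm i y^2/h}\,\dx y - \tilde\rho(0)\,\e^{\pm i\pi/4}(2\pi h)^{1/2}\Big| \leq C\,\|\tilde\rho'\|_{L^1}\,h,
\]
but for $\tilde\rho$ merely in $W^{1,1}$ the right-hand side should carry $h^{1/2}$, not $h$. The Fubini bound you invoke gives $\int |\tilde\rho'(s)|\,\big|\int_s^{\pm\infty}\e^{\pm i y^2/h}\,\dx y\big|\,\dx s$, and $\big|\int_s^{\infty}\e^{\pm i y^2/h}\,\dx y\big|\to \tfrac12\sqrt{2\pi h}$ as $s\to 0^+$, so the uniform bound on the inner integral is only $O(h^{1/2})$; taking $\tilde\rho'$ to be an $L^1$-normalized bump of width $\ll h^{1/2}$ at the origin shows the $h^{1/2}$ rate is sharp in $W^{1,1}$. (The $O(h)$ rate does hold with an extra degree of regularity, e.g.\ $\tilde\rho\in W^{2,1}$, where the odd linear term cancels.) This overcount does not break your proof because the lemma's conclusion only demands $h^{1/2}$, but it does mean the Morse step is not a ``gain of a full power $h$'' as you wrote. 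Second, the remark that the $f$-dependent constants from the Morse chart and cutoff can be ``absorbed into the $h^{1/2}$ because $h<\tfrac12$'' is not correct as stated: a fixed constant times $h^{1/2}$ is not $\leq h^{1/2}$. The lemma has to be read with an implicit constant depending on $f$ near $x_0$, which is indeed how the paper uses it---in Propositions \ref{mainprop1} and \ref{mainprop2} the factor $1/|f''(k_0)|$ is controlled separately and the support of the amplitude is fixed by the cutoffs.
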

The proposition below refines the error analysis in the equations below \cite[(2.1.2)]{schindler}. 
\begin{proposition}[\cite{Schaf}] \label{stirling}
Let $k\in [1,\infty)$ and $\mu\in \mathbb{N}$.
Only if $\mu\leq \sqrt{k}$, the series 
\begin{align}
\log\frac{\Gamma(-ik)}{\Gamma(\frac{1}{2}-\mu-ik)}=-(\frac{1}{2}-\mu)\log(-ik)+\sum\limits_{n=1}^N\bigg(\frac{B_{n+1}(0)-B_{n+1}(\frac{1}{2}-\mu)}{n(n+1)}(ik)^{-n}+H_n(k)\bigg)
\end{align}
converges as $N\to\infty$. 
\end{proposition}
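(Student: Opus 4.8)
The statement is a $\mu$-uniform, error-controlled version of the classical Stirling expansion for a ratio of Gamma functions, and the plan is to (i) make the $\mu$-dependence explicit, (ii) identify the listed coefficients together with an exact remainder, and (iii) locate precisely where the threshold $\mu\le\sqrt k$ is forced.

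\emph{Isolating the $\mu$-dependence.} Applying the functional equation $\mu$ times gives $\Gamma(\tfrac12-\mu-\I k)=\Gamma(\tfrac12-\I k)\big/\prod_{\ell=1}^{\mu}(\tfrac12-\ell-\I k)$, hence
\begin{equation*}
\log\frac{\Gamma(-\I k)}{\Gamma(\tfrac12-\mu-\I k)}
=\log\frac{\Gamma(-\I k)}{\Gamma(\tfrac12-\I k)}+\sum_{\ell=1}^{\mu}\log\!\Big(\tfrac12-\ell-\I k\Big).
\end{equation*}
The $\ell$-sum is elementary: writing $\tfrac12-\ell-\I k=-\I k(1+\tfrac{\ell-1/2}{\I k})$ and expanding each logarithm yields a power series in $(\I k)^{-1}$ that converges once $\mu<k$, and the identity $\sum_{\ell=1}^{\mu}(\ell-\tfrac12)^{p}=\tfrac1{p+1}\big(B_{p+1}(\mu+\tfrac12)-B_{p+1}(\tfrac12)\big)$ rewrites it through Bernoulli polynomials. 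The difference $\log\Gamma(-\I k)-\log\Gamma(\tfrac12-\I k)$ I would keep in Euler--Maclaurin/Binet form, i.e.\ with the exact remainder $\int_{\e^{\I\vartheta}\R^{+}}B(t)\,\tfrac{1-\e^{-t/2}}{t}\,\e^{\I kt}\,\dx t$, where $B(t)=\tfrac1{\e^{t}-1}-\tfrac1t+\tfrac12$ and the $t$-contour is rotated to a ray $\e^{\I\vartheta}\R^{+}$ with $0<\vartheta<\tfrac\pi2$ (legitimate, since $B$ is holomorphic for $0<|t|<2\pi$), rather than its divergent Stirling series; its $n$-th formal coefficient against $(\I k)^{-n}$ is $\tfrac{(-1)^{n+1}B_{n+1}(0)}{n(n+1)}$ resp.\ $\tfrac{(-1)^{n+1}B_{n+1}(1/2)}{n(n+1)}$. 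Collecting the two contributions order by order and using $B_{n+1}(1-x)=(-1)^{n+1}B_{n+1}(x)$ identifies the combined $n$-th coefficient with $\tfrac{B_{n+1}(0)-B_{n+1}(\frac12-\mu)}{n(n+1)}$ (up to the sign convention of the statement), and $H_n(k)$ is then the telescoping increment of the leftover Binet remainder $\mathcal R_N(k,\mu)$, so that $\sum_{n\le N}\big(\text{stated summand}_n+H_n(k)\big)$ equals the left-hand side minus $\mathcal R_N(k,\mu)$ for every $N$. Thus the Proposition reduces to showing $\mathcal R_N(k,\mu)\to0$ as $N\to\infty$.

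\emph{The remainder estimate --- the crux.} It remains to prove that $\mathcal R_N(k,\mu)\to0$ exactly for $\mu\le\sqrt k$. Two mechanisms interact: the tail of the $\ell$-sum, bounded by $\sum_{\ell=1}^{\mu}\sum_{p\ge N}\tfrac1p\big(\tfrac{\ell-1/2}{k}\big)^{p}\lesssim\tfrac\mu N\big(\tfrac\mu k\big)^{N}$; and the Binet remainder, which by Cauchy's inequality on $|t|=\rho<2\pi$ is at most $C\rho^{-N}\big(\max_{|s|=\rho}|B(s)(1-\e^{-s/2})|\big)\int_{0}^{\infty}s^{N-1}\big|\e^{\I k\e^{\I\vartheta}s}\big|\,\dx s=C\rho^{-N}\e^{O(\mu\rho)}\Gamma(N)(k\sin\vartheta)^{-N}$, the factor $\e^{O(\mu\rho)}$ being generated by the shifts $\e^{-\ell t}$ that arise when the $\ell$-sum is recombined with the Binet integral. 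Optimising jointly in $\rho\in(0,2\pi)$ and in the truncation level --- a saddle-point balance against the Stirling-type factor $(2\pi k)^{-N}\Gamma(N)$ --- the $\mu$-shift costs a residual exponential weight that stays bounded, so that the bound is genuinely $o(1)$ rather than merely passing through an exponentially small minimum, exactly in a regime of the form $\mu^{2}\lesssim k$, i.e.\ $\mu\le\sqrt k$; for $\mu>\sqrt k$ the same balance forces $\mathcal R_N\not\to0$, which is the ``only if''. \emph{This quantitative estimate is the main obstacle}: one must control, with constants independent of $\mu$, how the $\mu$-shift simultaneously degrades the effective radius of Binet's kernel and the saddle balance, pinning the break-even at $\mu\sim\sqrt k$. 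For the sharp constants and this break-even we invoke \cite{Schaf}; everything preceding is the routine bookkeeping of the functional equation, Binet's formula and the Bernoulli-polynomial identities.
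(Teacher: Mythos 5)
Your route differs structurally from the paper's. The paper works directly with the Stirling--series coefficients $\frac{B_{n+1}(0)-B_{n+1}(\frac12-\mu)}{n(n+1)}(\I k)^{-n}$, applies the integration identity $\int_a^x B_n=\frac{B_{n+1}(x)-B_{n+1}(a)}{n+1}$, invokes Faulhaber to get the trivial bound $\big|\frac{B_{n+1}(0)-B_{n+1}(\frac12-\mu)}{n(n+1)}\big|\le\mu^{n+2}$, and then simply cites \cite{Schaf} for the detailed convergence analysis. You instead peel off the $\mu$-dependence first via the functional equation, reducing it to a finite elementary $\ell$-sum, and treat the $\mu$-free ratio $\Gamma(-\I k)/\Gamma(\frac12-\I k)$ by a rotated Binet integral. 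That is a legitimate alternative organization of the bookkeeping, and your recollection of coefficients via $B_{n+1}(1-x)=(-1)^{n+1}B_{n+1}(x)$ is sound.

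However, there is a genuine gap in your key (second) step, and it is an internal inconsistency rather than a mere omission. Once you have applied the functional equation, \emph{all} the $\mu$-dependence resides in the elementary product $\prod_{\ell=1}^{\mu}(\frac12-\ell-\I k)$. Its logarithm is a power series in $(\I k)^{-1}$ with radius of convergence $\sim k/\mu$; your own tail estimate $\sum_{\ell\le\mu}\sum_{p\ge N}\frac1p(\frac{\ell-1/2}{k})^p\lesssim\frac{\mu}{N}(\frac{\mu}{k})^N$ says exactly that, and the threshold it produces is $\mu<k$, not $\mu\le\sqrt k$. Meanwhile the Binet remainder in your decomposition is for the fixed ratio $\Gamma(-\I k)/\Gamma(\frac12-\I k)$ and therefore carries \emph{no} $\mu$ whatsoever. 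The factor $\e^{O(\mu\rho)}$ that you introduce into the Binet bound --- attributed to ``the shifts $\e^{-\ell t}$ that arise when the $\ell$-sum is recombined with the Binet integral'' --- has no source: the whole point of applying the functional equation was precisely \emph{not} to have those shifts inside the Binet kernel. So the claimed saddle balance producing $\mu^2\lesssim k$ does not follow from the decomposition you set up; as written, your two pieces would give threshold $\mu<k$ for the $\ell$-sum and a $\mu$-independent (and in fact, for fixed $\rho$ and $N\to\infty$, not even vanishing) Binet tail. To make the $\sqrt k$ break-even emerge one has to keep the $\mu$-shift coupled to the factorial growth inside a single coefficient, which is what the paper's $B_{n+1}(0)-B_{n+1}(\frac12-\mu)$ form does (there the crossover of $B_{n+1}(\frac12-\mu)$ between polynomial growth $\mu^{n+1}$ for $n\lesssim\mu$ and factorial growth for $n\gtrsim\mu$ can interact with the optimal truncation $n\sim k$). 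That coupling is exactly what your split discards. Both your write-up and the paper's ultimately defer the sharp quantitative step to \cite{Schaf}, but the paper's outline at least retains the mechanism that could conceivably yield $\mu\le\sqrt k$; yours, as structured, cannot.
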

\begin{proof}[Proof outline, convergence step]
Because this is an infinite expansion due to Stirling's Theorem and integration by parts, it can only converge when $\mu\leq \sqrt{k}$. Because the Bernoulli polynomials satisfy the equation 
\begin{align}
\int\limits_a^x B_n(u)\,du=\frac{B_{n+1}(x)-B_{n+1}(a)}{n+1}.
\end{align}
We obtain, using Faulhaber's formula, that
\begin{align}
\left|\frac{B_{n+1}(0)-B_{n+1}(\frac{1}{2}-\mu)}{n(n+1)}\right|\leq \left|\sum\limits_0^\mu k^{n+1}\right|\leq \mu^{n+2}.
\end{align}
The detailed steps can be found in \cite{Schaf}. The major point is that the series does not converge unless $\mu\leq \sqrt{k}$. 
\end{proof}

\end{document}